\newtheorem{theorem}{Theorem}[section]
\newtheorem{lemma}{Lemma}[section]
\newtheorem{proposition}{Proposition}[section]
\theoremstyle{definition}
\newtheorem{definition}{Definition}[section]
\theoremstyle{remark}
\newtheorem{remark}{Remark}[section]
\numberwithin{equation}{section}
\newtheorem{corollary}{Corollary}[section]
\newtheorem{problem}{Problem}[section]
\numberwithin{equation}{section}
\newcommand{\C}{\mathbb C}
\newcommand{\Z}{\mathbb Z}
\newcommand{\R}{\mathbb R}
\newcommand{\CP}{\mathbb C\mathbb P}
\newenvironment{dedication}
  {\thispagestyle{empty}% no header and footer
   %\vspace*{\stretch{1}}% some space at the top 
   \itshape             % the text is in italics
   \raggedleft          % flush to the right margin
  }
  {\par % end the paragraph
   %\vspace{\stretch{1}} % space at bottom is three times that at the top
  }
\begin{document}

 \title[Singular fibrations]{Singular fibrations over surfaces}
\author[L.Funar]{Louis Funar}
\address{Institut Fourier BP 74, UMR 5582, Laboratoire de Math\'ematiques, 
Universit\'e Grenoble Alpes, CS 40700, 38058 Grenoble cedex 9, France}
\email{louis.funar@univ-grenoble-alpes.fr}

\maketitle
\begin{dedication}
Dedicated to Norbert A'Campo on the occasion of his 80th birthday
\end{dedication}

\begin{abstract}
Singular fibrations generalize achiral Lefschetz fibrations of 4-manifolds over surfaces 
while sharing some of their properties. For instance, relatively minimal singular fibrations are determined by their monodromy. We  explain how to construct 
examples of singular fibrations with a single singularity and Matsumoto's construction of singular fibrations of the sphere $S^4$. Previous results of Hirzebruch and Hopf on 2-plane fields with finitely many singularities are outlined in connection with the work of Neumann and Rudolph on the Hopf invariant. Eventually, we prove that closed orientable 4-manifolds with large first Betti number and vanishing second Betti number  do not admit singular fibrations.  

\vspace{0.1cm}
\noindent MSC Class: 57 R 45, 57 R 70. 

\noindent Keywords:  4-manifold, 2-plane field, achiral Lefschetz fibration, fibered link, isolated singularity, 
mapping class group, Milnor number, open book decomposition, singular fibration, surface. 

\end{abstract}

%\date{February 20, 2008}

\section{Introduction}

%\subsection{State of the art}
The aim of this chapter is to study smooth maps 
$f:M\to N$ between a compact connected oriented 4-manifold $M^4$ and a compact oriented surface $N^2$ having only finitely many 
critical points. 
We start with: 

\begin{definition}
A  singularity $p$ of the smooth map $f:M\to N$ is {\em cone-like} 
if $p$ admits a cone neighborhood in the corresponding fiber $V=f^{-1}(f(p))$, i.e. there exists some closed manifold 
$L\subset V-\{p\}$ and a neighborhood $N$ of $p$ in $V$ which is homeomorphic to 
the cone $C(L)$ over $L$. Recall that $C(L)=L\times (0,1]/L\times \{1\}$. 
\end{definition}

Church and Timourian proved  (see \cite{CT2} and another proof in \cite{F}) 
that all isolated singularities in codimension at most $2$ are cone-like. 
In particular, isolated critical points of smooth maps $f:M^4\to N^2$ are cone-like. 
As Takens pointed out in \cite{Takens}, this is not anymore true in codimension 3 or higher.

\begin{definition}
A  cone-like singularity of the smooth map $f:M\to N$ is {\em regular} 
if it admits an adapted neighborhood (see section \ref{fiberedlinks}) which is diffeomorphic to a disk. 
\end{definition}

By a result of King (see \cite{King,King2}) unless $\dim M\in\{4,5\}$ 
all cone-like singularities are regular. When $\dim M=4$ the general  
theory developed by King only shows that we can find 
contractible adapted neighborhoods.

\begin{definition}
A smooth map $f:M\to N$ is {\em a singular fibration} 
if it has only finitely many critical points, all of them being regular and cone-like.  
\end{definition}

We will focus in this chapter on singular fibrations $M^4\to N^2$. 
In this case, around each critical point the map is 
described by (a generalization of) the local Milnor fibration of a 
germ with isolated singularity $(\R^4,0)\to (\R^2,0)$. 
Such a singularity is topologically locally determined by a fibered link $K\subset S^3$, called the 
{\em local link} of the singularity, see 
Section 2.1 for details. 

The simplest nontrivial singularity is given by a complex Morse map $f:(\C^2,0)\to (\C,0)$  
\[ f(z_1,z_2)=z_1^2+z_2^2.\]
The corresponding  local link is the positive Hopf link, having linking number $+1$.  
Maps $M^4\to N^2$ admitting a local system of real coordinates  which is globally 
coherently oriented such that the restriction to every chart is topologically equivalent to a complex Morse map are called {\em Lefschetz fibrations}. Note that we do not require the atlas to be a global complex coordinate system and in particular Lefschetz fibrations  need not  be holomorphic.    

Taking the complex conjugate of a coordinate we obtain the map $f'(z_1,z_2)=\overline{z}_1^2+z_2^2$ which is equivalent to a complex Morse map by an {\em orientation reversing} 
change of coordinates. Its local link at the origin is the mirror image of the positive Hopf link, and hence the negative Hopf link with linking number $-1$. We call {\em quadratic} singularities 
those critical points which are locally equivalent to either $f$ or $f'$, by orientation 
preserving changes of coordinates.

Recall that a smooth map $f:M^4\to N^2$  whose singularities are quadratic is called an {\em achiral Lefschetz fibration}. Thus achiral Lefschetz fibrations correspond to maps whose singularities have local links equivalent to Hopf links.

Fibered links arising as local links of  complex polynomials or holomorphic functions with isolated singularities were intensively studied. They are well understood, as all of them are cabled torus links. 

Whether all fibered links can arise as local links of real polynomials or real analytic functions 
with isolated singularities seems presently unknown.  
According to Looijenga (\cite{Loo}), if $K\subset S^3$ is an arbitrary link, then the connected sum $K\# K$ is the link of a real polynomial isolated singularity $(\R^4,0)\to (\R^2,0)$.  Moreover, any fibered link $K$ is the link of a semi-algebraic local isolated singularity, whose components are of the form $P(x)+|x| Q(x)$, where $P, Q$ are real polynomials and $| \; |$ denotes the norm.

The present chapter aims at addressing some of the following questions below. 

\begin{problem}
Can we classify singular fibrations? 
\end{problem}

If we fix the topological type of the generic fiber and the number of branch points (i.e. singular values), 
then a first step towards an answer, in purely algebraic terms, is provided by Proposition \ref{classification}. This result is a 
high-dimensional analog of the Hurwitz classification of 
ramified coverings between surfaces.  Nevertheless, it is rather ineffective as a 
description of diagonal conjugacy classes of tuples of elements 
in the mapping class group is  presently lacking.  In particular, it does not lead to insights on:

\begin{problem}\label{existence}
Which closed  oriented 4-manifolds admit singular fibrations over surfaces? 
\end{problem}

Harer proved in \cite{Harer} that a 4-dimensional manifold with boundary has a handlebody decomposition with handles of index $\leq 2$ if and only if it admits an achiral Lefschetz fibration over the 2-disk. An analog of Harer's result was proved recently for the nonorientable case by Miller and Ozbagci (\cite{MO}). 
In the same vein, Matsumoto proved in \cite{Mat2} that a closed smooth simply connected 4-manifold which admits a handlebody decomposition without 1-handles and with at most one 3-handle admits a 
continuous torus singular fibration over the 2-sphere which is smooth everywhere except at one point. 
Moreover, Etnyre and Fuller  (\cite{EF}) proved that for any smooth closed simply connected orientable 4-manifold $M^4$, the connected sum 
$M\# S^2\times S^2$ admits an achiral Lefschetz fibration over $S^2$. 

On the other hand, Gompf and Stipsicz proved that there exist 4-manifolds $M^4$ which do not admit achiral Lefschetz 
fibrations over $S^2$, 
for instance $\#_m S^1\times S^3$ when $m\geq 2$ (see \cite{GS}, Section 8.4).
We will show that these manifolds do not admit singular fibrations either, see Proposition \ref{notfiber}. The proof uses in an essential way the analysis made by 
Hirzebruch and Hopf in \cite{HH} of the oriented 2-plane tangent fields with finitely 
many singularities on 4-manifolds. 

\begin{problem}\label{charconj}
Suppose that there exists a singular fibration $M^4\to N^2$ between closed orientable manifolds.  
Does $M^4$ admit an achiral Lefschetz fibration over $N^2$ or another surface?
\end{problem}

We say that a singularity is {\em $\pm$-holomorphic} if it is locally topologically equivalent to a 
holomorphic germ with an isolated singularity. We emphasize that the local homeomorphism 
need not preserve the orientation. 
It is well-known that the problem above has an affirmative answer when the singularities are $\pm$-holomorphic, see Proposition \ref{holomorphic}. 

Assume that the isotopy type of the generic fiber is fixed. Then we have to look 
over the set of singular fibrations $X^4\to D^2$ having the same (vertical) boundary fibrations over $S^1$. Such singular fibrations are called {\em shell} equivalent.

\begin{problem}\label{unfold}
Describe singular fibrations up to shell equivalence. 
\end{problem}

Some invariants of shell equivalence classes are derived from the Hopf invariants, see 
Corollaries \ref{shellindex1} and \ref{shellindex2}.  
The shell equivalence of singular fibrations $D^4\to D^2$, was 
considered in \cite{NR}, where it was called 
{\em unfolding} for the respective collections of local links. 
The work of Neumann and Rudolph from \cite{NR,NR2,NR3}
provides fibered links which do not unfold to Hopf 
links, see also \cite{Inaba}.

Let $\varphi(M,N)$ denote the minimal number of critical points
of a smooth map between the manifolds $M$ and $N$. Although there is a large supply of results concerning the minimal number of singular fibers and hence of  singular points for Lefschetz fibrations, see \cite{KO,OO}, there is little known about $\varphi(M^4,S^2)$. 
Note that the $\varphi(M^4, S^2)$ can be strictly smaller than the 
number of critical points in an achiral Lefschetz fibration. 
Indeed $\varphi(S^4,S^2)=1$, by the work \cite{Mat,Mat2} of Matsumoto, while  
the  (minimal) number of critical points in an achiral Lefschetz 
fibration of $S^4$ over $S^2$ is $2$. 
We set $\varphi_{s}(M^4,N^2)$ for the minimal 
number of critical points of a singular fibration $M^4\to N^2$. 
Similar invariants counting cone-like critical points have been studied 
in higher dimensions in \cite{AFK,FP} and they are conjecturally the same as $\varphi$.   
We don't know under which conditions singularities can merge together to produce a higher singularity and a smaller number of critical points. In particular, the following 
might be relevant:

\begin{problem}\label{varphi}
Can we characterize 4-manifolds with $\varphi_{s}(M^4,N^2)=1$, for some $N^2$?  
\end{problem}

We will provide necessary conditions on the fundamental groups of manifolds with the property 
$\varphi_{s}(M^4,S^2)=1$ and derive that $\varphi_{s}$ can take values larger than 1. 
We further give a satisfactory answer to the problem above in the case of homology spheres, see Propositions \ref{onecritical} and \ref{hspheres}. 

Note that there is also a group-theoretical invariant derived out of $\varphi_{s}$. For any 
finitely presented group $G$ there exists at least one closed smooth 4-manifold with 
$\pi_1(M^4)$ isomorphic to $G$. The set of all possible values taken by $\varphi_{s}(M^4, S^2)$,  
in particular  the lowest value, is an invariant 
of $G$. However, it is difficult to compute. For instance it is unknown whether:

\begin{problem}
Is it true that any smooth closed simply connected oriented 4-manifold has a singular fibration over some surface?
\end{problem}

The starting point of this article is the work of A'Campo on real morsifications of isolated singularities of planar curves (see \cite{AC1}). 
We were strongly influenced by the papers of Neumann and Rudolph (\cite{NR,NR2,NR3,Rud,Rud2}) on the  Hopf invariant,  the 
Hirzebruch--Hopf results on 2-plane fields on 4-manifolds 
(\cite{HH}) and  Matsumoto's work on constructing singular fibrations on 4-manifolds (\cite{Mat0,Mat,Mat2}).

{\bf Acknowledgments.} We are indebted to Norbert A'Campo, 
Fran\c{c}ois Dahmani, Pierre Dehornoy,  Nicolas Dutertre, 
Christine Lescop, Burak Ozbagci, Andras Stipsicz and Mihai Tib\v{a}r for useful discussions and suggestions.

\section{Fibered links and the description of singular fibrations}

\subsection{Fibered links and local models for isolated singularities}\label{fiberedlinks}
Recall that the isotopy class of the oriented 
link $K\subset X^{3}$ with a trivial 
normal bundle  is called {\em generalized Neuwirth--Stallings fibered} (or $(X^{3}, K)$ 
is a  generalized Neuwirth--Stallings pair) 
if, for some trivialization $\theta: N(K)\to K\times D^2$ of the tubular 
neighborhood $N(K)$ of $K$ in $X^{3}$,  
the fiber bundle $\pi\circ \theta: N(K)-K \to S^{1}$ admits an extension 
to  a smooth fiber bundle 
$f_K:X^{3}- K\to S^{1}$. Here $\pi:K\times (D^2-\{0\})\to S^{1}$ is the 
composition of the radial 
projection $r: D^2-\{0\}\to S^{1}$ with the second factor projection. This is equivalent to the condition that the closure of every fiber is its compactification  by the binding link.
The data $\mathcal E=(X^{3}, K, f_K, \theta)$ is then called an {\em open book decomposition} with binding $K$ while $K$ is called a {\em fibered link}. We will frequently use the notation 
$f_K$ for the induced fibration $f_K:X^{3}- N(K)\to S^{1}$, whose fiber is now compact. 
When $X^{3}=S^{3}$ we have the classical notions of Neuwirth--Stallings fibrations and pairs. 

An {\em adapted neighborhood} around a cone-like isolated  critical point of a 
map $f:M^4\to N^2$ is a compact manifold neighborhood $Z^{4}\subset M^4$ containing it 
with the following properties:

\begin{enumerate}
\item $f$ induces a proper map $f:Z^4\to D^4$ onto a disk $D^2\subset N^2$; 
\item $f^{-1}(x)$ is transversal to $\partial Z^4$ for $x\in{\rm int}(D^2)$ and 
$E=f^{-1}(S^{1})\cap Z^4\subset \partial Z^4$;   
\item Let $K=\partial Z^4\cap f^{-1}(0)$ and $N(K)=\partial Z^4 \cap f^{-1}(D_0^2)$ 
a tubular neighborhood of $K$ within $\partial Z^4$, 
endowed with the trivialization $\theta$ induced by $f$, where $D_0^2\subset D^2$ is a small disk around $0$. Then the composition $f_K=r\circ f: \partial Z^4- f^{-1}(0)\to S^{1}$ 
of the radial projection $r$ with $f$ is a locally trivial fibre bundle, so that 
the data $(\partial Z^4, K, f|_{K}, \theta)$ is an open book decomposition.    
\end{enumerate}

King proved in \cite{King} that all cone-like isolated singularities admit 
contractible adapted neighborhoods. However, we don't know whether 
one can always choose adapted neighborhoods diffeomorphic to a disk.  
For this reason we restrict ourselves from now on to those cone-like singularities 
which are regular.

Recall now from \cite{Loo,KN,Rud}  that an open book decomposition $\mathcal E=(S^{3}, K, f_K, \theta)$ gives rise to a local  
isolated singularity $\psi_{\mathcal E}:(D^4,0)\to (D^2,0)$  by means of the formula:
\[\psi_{\mathcal E}(x)=\left\{\begin{array}{cl}
\lambda(\|x\|)f_K\left(\frac{x}{||x||}\right), & {\rm if} \; \frac{x}{||x||}\not\in N(K); \\
\lambda\left( \|x\|\cdot \left\|\pi_2\left(\theta\left(\frac{x}{\|x\|}\right)\right)\right\|\right)f_K\left(\frac{x}{||x||}\right), & {\rm if} \; \frac{x}{||x||}\in N(K); \\
0, & {\rm if }\;  x=0,
\end{array}
\right.
\]
where $\pi_2:K\times D^2\to D^2$ is the projection on the second factor and $\lambda:[0,1]\to [0,1]$ is any smooth 
strictly increasing map sufficiently flat at $0$ and 1 such that $\lambda(0)=0$ and 
$\lambda(1)=1$. 

Although $\psi_{\mathcal E}$ is not uniquely defined by this formula, all 
germs obtained this way are topologically equivalent. This is a direct consequence of the characterization of cone-like isolated singularities due to King 
(see \cite{King}, Thm. 2 and \cite{King2}, Thm. 1).  Moreover, they are also smoothly equivalent 
by germs of diffeomorphisms of $D^m\setminus\{0\}$ (see \cite{KN}, Thm. 1.10, for $k=2$).  
We call such $\psi_{\mathcal E}$ {\em local models} of isolated singularities.

If  $K$ is in generic position, namely the space generated by vectors in $\R^4$ with endpoints in $K$ coincides with 
the whole space $\R^4$, then  $(d\psi_{\mathcal E})_{0}=0$, i.e. $\psi_{\mathcal E}$ has rank $0$ at the origin.

By language abuse we will speak of fibered links $K$ as being links which admit an open book structure. We emphasize that, in general, $\psi_{\mathcal E}$ depends on the choice of the fibration $f_K$ and the trivialization $\theta$, not of the embedding of $K$ alone. However, it is specific to dimension 3 that the fiber $F\subset M^3\setminus K$ determines not just the binding $K=\partial F$ but also the fibration $f_K$ up to isotopy relative to $F$. This follows from a classical result of Waldhausen stating that homotopic fibrations are actually isotopic. It is actually known that the {\em oriented} fibered link $K\subset M^3$ in a homology sphere $M^3$ determines uniquely its open book fibration, up to isotopy (see \cite{GG}, p.99).

Looijenga in \cite{Loo} proved that a Neuwirth--Stallings pair $(S^{3},L, f_L, \theta)$  can be realized by a {\em real polynomial} map  
if $L$ is invariant and the open book fibration $f_L$ is equivariant with respect to the antipodal map.  
In particular, for any fibered link $K$ the connected sum $K\# K$ 
is a Neuwirth-Stallings pair isomorphic to the 
link of a real polynomial isolated singularity $\psi_K:(\R^4,0)\to (\R^2,0)$.

\subsection{Fiber connectedness} It was noted by Gompf and Stipsicz in \cite{GS} that singular fibrations behave in many respects like achiral Lefschetz fibrations, in particular we have the following  analog of the long exact sequence in homotopy: 
\begin{lemma}\label{lesh}
For any singular fibration $f:M^4\to N^2$ with generic fiber $F$ 
there is an exact sequence: 
\[ \pi_1(F)\to \pi_1(M^4)\to \pi_1(N^2)\to \pi_0(F)\to 0.\]
\end{lemma}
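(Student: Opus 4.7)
My plan is to deduce the exact sequence from the standard homotopy long exact sequence of the fiber bundle obtained by restricting $f$ to the complement of the singular values, then to descend it through the quotients that fill in the critical points and their images.

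Put $\Sigma = f(\mathrm{Crit}(f))\subset N^2$, $N_0 = N^2\setminus \Sigma$ and $M_0 = f^{-1}(N_0)$. Since $M^4$ is compact, the restriction $f|_{M_0}$ is a proper submersion and hence by Ehresmann a locally trivial bundle with fiber the generic fiber $F$. The base $N_0$ is an open surface, so $\pi_2(N_0)=0$, and $M_0$ is connected because $f^{-1}(\Sigma)$ is a codimension-two subset of the connected 4-manifold $M^4$. The homotopy long exact sequence of $f|_{M_0}$ thus collapses to
\[
0\to \pi_1(F)\to \pi_1(M_0)\to \pi_1(N_0)\to \pi_0(F)\to 0.
\]
The inclusion $N_0\hookrightarrow N^2$ induces a surjection with kernel $K_N$ equal to the normal closure of the small meridians $\gamma_i$ around the critical values $q_i$. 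Because each singularity is regular, it has an adapted neighborhood $Z\cong D^4$; the boundary $\partial Z\cong S^3$ bounds this 4-ball, so the meridians of the 2-dimensional singular fibers $V_i = f^{-1}(q_i)$ are null-homotopic in $M^4$. Hence the inclusion $M_0\hookrightarrow M^4$ also induces a surjection on $\pi_1$ whose kernel $K_M$ is normally generated by these meridians, and $f_*$ sends meridians to the $\gamma_i$ so that $f_*(K_M)\subseteq K_N$.

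The decisive geometric input is that the local monodromy $\phi_i:F\to F$ around a critical value $q_i$ preserves every component of $F$. Indeed $\phi_i$ is identified with the monodromy of the open book decomposition of the local fibered link $K_i\subset \partial Z_i\cong S^3$, extended by the identity off the Milnor page $F_i\subset F$; since an open book monodromy fixes the binding pointwise, each component of $F_i$ (which meets $K_i$ under the standard connectedness convention for pages of open books) is preserved, and the extension by identity preserves each component of $F$. Therefore $K_N\subseteq \ker(\pi_1(N_0)\to \pi_0(F))$, and the connecting map descends to a surjection $\pi_1(N^2)\twoheadrightarrow \pi_0(F)$.

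A diagram chase produces the claimed exact sequence. Surjectivity of $\pi_1(N^2)\to \pi_0(F)$ follows from $\pi_0(M_0)=\ast$, and exactness at $\pi_1(N^2)$ from $K_N\subseteq \ker(\pi_1(N_0)\to \pi_0(F))$. The main obstacle is exactness at $\pi_1(M^4)$: one must show that any loop in $M^4$ mapping trivially to $\pi_1(N^2)$ is represented by a loop in the fiber. Algebraically this is the identity $\pi_1(F)\cdot K_M = f_*^{-1}(K_N)$ inside $\pi_1(M_0)$, which one verifies by combining the inclusion $K_N\subseteq \mathrm{im}(f_*)$ (which lets one lift each conjugate of $\gamma_i$ to a loop in $M_0$) with the null-homotopy of these meridional lifts inside the $D^4$ adapted neighborhoods, which converts them into elements of $K_M$.
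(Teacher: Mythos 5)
Your argument is correct and is essentially the standard one: the paper gives no proof beyond citing Gompf--Stipsicz (Prop.\ 8.1.9 and Ex.\ 8.1.10(b)), whose argument is exactly your route of passing to the genuine bundle over the complement of the critical values, using its homotopy exact sequence, and then filling the meridians back in. The only step deserving slightly more care than you give it is the disconnected-fiber case, where lifting a conjugate $w\gamma_i w^{-1}$ into $\mathrm{im}\,\pi_1(F)\cdot K_M$ requires that every component of a nearby fiber meet some meridional disk of the singular fiber; this holds because no component of the local page is closed (a closed page component would give a compact open subset of the connected noncompact manifold $S^3\setminus K_i$), so every component of $F_i$ meets the binding and every component of $F$ is reached either by a transverse disk in $(F\setminus \mathrm{int}\,F_i)\times D^2$ or by a meridian of $K_i$ inside the adapted neighborhood.
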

\begin{proof}
See \cite[Prop. 8.1.9 and Ex.8.1.10.(b)]{GS}. 
\end{proof}
In particular the image of $\pi_1(M^4)$ in $\pi_1(N^2)$ has always finite index.  
When the fiber $F$ is not connected, $f$ lifts to a singular fibration 
of $M^4$ to a finite cover of $N^2$ such that the induced map between fundamental 
groups is surjective and hence the generic fiber is now connected. 

Henceforth we only consider singular fibrations with 
{\em connected} generic fibers, throughout this chapter.

\subsection{Singular fibrations}\label{cutpaste}
The method used in \cite{FPZ,F,FP} to globalize a local picture was to glue together a patchwork of such local models 
to obtain maps $M^{4}\to N^{2}$ with finitely many 
critical points.  We obtain this way a complete description of singular fibrations in terms of algebraic data.  For the sake of simplicity consider below the case when $N^2=S^2$. 

Let  $K_j\subset S^3$ be  fibered knots whose associated open books have the fibers $F_j^{2}$
and the associated smooth maps $\psi_{K_j}:(D_j^4,0)\to (D^2,0)$. 
Consider some abstract surface $Z^{2}$ with $\partial Z^{2}=\sqcup_j K_j$. 
We now define the {\em expansion} of the fiber by means of $Z$ as follows.  
We glue together the copies $D_j^4$  of the $4$-disk with the product  
$Z\times D^2$ along part of their boundaries 
by identifying  $\sqcup_j N(K_j)$ with $\partial Z\times D^2$. 
The identification has to respect the trivializations  
$\sqcup_j N(K_j)\to D^2$  and hence one can take them to be 
the same as in the double construction. Note that $N(K_j)=K_j\times D^{2}$ and thus identifications respecting the trivialization correspond to homotopy classes $[\sqcup_j K_j, {\rm Diff}^+(D^2,\partial)]=1$.

We then obtain  a manifold with boundary $X=(\cup_j D_j^4)\cup (Z\times D^2)$
endowed with a smooth map $f_{(K_j)}: X\to D^2$ which restricts to 
$\psi_{K_j}$ on each disk $D_j^4$  and with the second projection on $Z\times D^2$. 
Then $f_{(K_j)}$ has finitely many critical points, all of them lying in 
the same  singular fiber over the origin $0\in D^2$.   
Its generic fiber is the surface $F$ obtained by 
gluing together  $(\cup_jF_j)\cup Z$. We call then 
the pair $(X, f_{(K_j)})$ a {\em disk block}. Its boundary $\partial X$ inherits 
a fibration $f_{(K_j)}|_{\partial X}: \partial X\to S^1$.

We say that a collection of fibrations over the circle  with the same fiber are {\em cobounding} in $N^2$ if they extend to some fibration over $N^2\setminus \sqcup_{i=1}^{p}D_i^2$. 
Let now $f_i:X_i\to D_i^2, 1\leq i\leq p$ be a set of disk blocks with the same fiber $F$ whose    
boundary fibrations are {\em cobounding}.  
We can then glue together the disk blocks and a fibration $E$ over  $N^2\setminus \sqcup_{i=1}^{p}D_i^2$ extending the boundary fibrations of disk blocks. We want the gluing diffeomorphisms to respect the fibration structure on the boundaries. Therefore we obtain       
a closed manifold $M(f_1,f_2,\ldots,f_p,E)$ endowed with a map with finitely many critical points into $N^2$. We will drop $E$ from the notation when its choice is implicit.  In most cases below $E$ will be a trivial fiber bundle.  
Moreover, 
the resulting manifold depends on the {\em gluing parameters}: for each circle 
$\partial D_i^2$ we have a (fixed) gluing diffeomorphism 
of the fiber $F$ over a base point and the homotopy
class of a loop in the space of diffeomorphisms of $F$ isotopic to the identity. 
Recall, after Earle and Eells (see \cite{EE}), that the space of diffeomorphisms isotopic to the identity is homotopy equivalent to $SO(3)$, if $F=S^2$, to a torus, if $F$ is a torus and 
contractible, when $F$ has higher genus. 
Note that the extension fibration $E$ is not unique, in general. However, when 
$N^2=S^2$, if an extension $E$ exists, then it is unique.

Conversely, every singular fibration $M^4\to N^2$ is equivalent to some 
singular fibration obtained by the previous construction, of the form  $M(f_1,f_2,\ldots,f_p, E)\to N^2$. 

We can replace each local model $\psi_{K_j}$ by left composing it with 
some diffeomorphism in ${\rm Diff}^+(D^2,\partial)$ sending 
the origin $0\in D^2$ into some point $p_j$, such that the points $p_j$ are distinct. 
Then the critical points of the new map $f_{(K_j)}:X\to D^2$ are now sitting in distinct 
fibers. 
If we are only interested in the diffeomorphism type of the 
4-manifold $M$ it  suffices therefore to consider the case when there is 
a single critical point in every singular fiber. In this situation each 
disk block map $f_i:X_i\to D_i^2$ 
is obtained from $\psi_{K_i}:D_i^4\to D_i^2$ by expanding the 
local Milnor fiber $F_i$ using boundary connected sums with some surface $Z_i$. 
In other terms the singular fibrations whose singular points have distinct images 
are completely determined by the fibration part $E$, the gluing parameters,  
the decompositions of the generic fiber $F$ as the boundary connected sums $F_i\cup Z_i$ and 
the monodromies of the open book decompositions associated to the knots $K_i$. 

Alternatively we can consider the fiber-sum of the disk blocks $f_i:X_i\to D_i^2$ 
and obtain a disk block $f:\#_i X_i\to \#_i D_i^2=D^2$, 
where $D^2\subset N^2$ is a disk containing all singular values. 
The manifold  $M(f_1,f_2,\ldots,f_p, E)$ is then obtained as the result of 
gluing together the disk block $X$ with the restriction of the fibration $E$ to 
$N^2\setminus D^2$.

We denote by $M(F)$ the {\em mapping class group} of the surface $F$. 
Choose now a base point $*$ on $S^2$ and a system of disjoint embedded arcs connecting it to 
base points on $\partial D_i^2$. This data permits to describe a fibration $E$ over 
$S^2\setminus  \sqcup_{i=1}^{p}D_i^2$ by means of the $p$-tuple of the monodromies $\phi_j$ 
along the loops issued from $*$ which follow the given arcs and encircle once 
clockwise the singular value in $D_j^2$. Note that $(\phi_j)\in M(F)^p$ is well-defined, up to 
diagonal conjugation by $M(F)$.

Now, a collection of fibrations over the circle is cobounding in $N^2$ if and only if 
the total monodromy $\phi_1\phi_2\cdots \phi_p$ is the product of $g$ commutators in $M(F)$, 
where $g$ is the genus of $N^2$. In particular, it is cobounding in $S^2$ if and only if 
the monodromy along a loop encircling all critical values is trivial:
\[ \phi_1\phi_2\cdots \phi_p=1\in M(F).\]

\subsection{Singular fibrations over $S^2$}\label{mono}
Throughout this section singular fibrations are required to have singular points 
with distinct images. 

\begin{definition}
Two singular fibrations $f,g:M^4\to S^2$, are {\em equivalent} if there exists some 
homeomorphisms $H:M^4\to M^4$ and $\varphi:S^2\to S^2$ such that $H\circ f=g\circ \varphi$. 
When $\varphi$ can be taken to be isotopic to the identity rel the branch locus, 
we say that the singular fibrations are {\em strongly equivalent}.
\end{definition}

Let $B\subset S^2$ denote the set of critical values of a singular fibration $f:M^4\to S^2$. 
Then the holonomy of the fiber bundle $f|_{M\setminus f^{-1}(B)}:M\setminus f^{-1}(B)\to S^2\setminus B$ provides us with a {\em monodromy homomorphism}: 
\[ \pi_1(S^2\setminus B)\to M(F),\]
which is well-defined up to conjugacy by $M(F)$.

Observe first that the monodromy 
of singular fibrations of oriented manifolds with generic fiber $S^2$ is trivial. 
In the study of Lefschetz fibrations one usually restricts to relatively minimal 
Lefschetz fibrations. In particular, no vanishing cycle bounds a disk in a generic fiber.  

By analogy, we say that a singular fibration whose generic fiber is a torus is {\em relatively minimal} if the complement 
$F\setminus {\rm int}(F_0)$ of the local Milnor fiber within the generic fiber $F$ 
does not have any disk component. The following extends the classification of 
genus one Lefschetz fibrations due to Moishezon (\cite{Moishe}):

\begin{proposition}\label{classificationg1}
Relatively minimal singular fibrations on $S^2$ with branch locus $B$, generic fiber 
a torus $T^2$ and surjective monodromy, up to strong equivalence are in one-to-one correspondence with the set of classes $M_{T^2}(S^2\setminus B)$ of surjective homomorphisms $\pi_1(S^2\setminus B)\to  SL(2,\Z)$ sending peripheral loops into  Dehn twists, modulo 
the conjugacy action by $SL(2,\Z)$. Furthermore, the classes of these 
relatively minimal singular fibrations up to equivalence are in one-to-one correspondence with the 
set  $\mathcal M_{T^2}(S^2\setminus B)$ of orbits of the mapping class group $M(S^2\setminus B)$ 
action on  $M_{T^2}(S^2\setminus B)$ by left composition.  
\end{proposition}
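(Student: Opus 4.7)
The plan is to convert the data of a singular fibration into algebraic tuples via the cut-and-paste description of Section \ref{cutpaste}, then exploit the relative minimality together with the torus hypothesis to force these tuples to be Dehn twists satisfying the cobounding condition, and finally identify the equivalence relations on the two sides. Concretely, by Section \ref{cutpaste}, a singular fibration $f\colon M^4\to S^2$ with critical values $B=\{b_1,\ldots,b_p\}$ and generic fiber $F=T^2$ is determined, up to strong equivalence, by a choice of disk block at each $b_i$ (a fibered link $K_i\subset S^3$ with Milnor fiber $F_i$ embedded in $T^2$) together with an extension fibration $E$ over $S^2\setminus\bigsqcup D_i^2$ whose monodromy around the $i$-th boundary circle is some $\phi_i\in M(T^2)=SL(2,\Z)$. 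The cobounding condition $\phi_1\cdots\phi_p=1$ is equivalent to the statement that the assignment on peripheral generators $\gamma_i\mapsto\phi_i$ extends to a well-defined homomorphism $\rho\colon\pi_1(S^2\setminus B)\to SL(2,\Z)$.

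The key step is to translate relative minimality into the Dehn-twist hypothesis. Each Milnor fiber $F_i$ is a connected surface with boundary embedded in $T^2$, and $Z_i:=T^2\setminus\mathrm{int}(F_i)$ has no disk components. Additivity of Euler characteristic along the boundary circles gives $\chi(F_i)+\chi(Z_i)=\chi(T^2)=0$, and since each component of $Z_i$ has $\chi\leq 0$, one gets $\chi(F_i)\geq 0$. Writing $F_i$ as a surface of genus $g_i$ with $n_i\geq 1$ boundary components yields $2-2g_i-n_i\geq 0$, so the only nontrivial possibility is $g_i=0$, $n_i=2$, i.e.\ $F_i$ is an annulus. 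Among fibered $2$-component links in $S^3$, only the positive and negative Hopf links admit an annular fiber, so each $K_i$ is a Hopf link and its open book monodromy is a Dehn twist along the core of $F_i$; extending by the identity over the necessarily annular $Z_i$ (whose boundary must consist of two parallel essential curves of $T^2$, since otherwise $Z_i$ would contain a disk component), the resulting $\phi_i\in SL(2,\Z)$ is a Dehn twist along a non-separating curve. Together with the surjectivity assumption, this places $\rho$ in $M_{T^2}(S^2\setminus B)$.

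The converse direction is realization: given $\rho\in M_{T^2}(S^2\setminus B)$, each Dehn twist $\phi_i$ is realized as the local monodromy of a Hopf-link singularity whose annular Milnor fiber is a tubular neighborhood of the twisting curve of $\phi_i$ in $T^2$, the relation $\phi_1\cdots\phi_p=1$ is precisely the cobounding condition that allows the disk blocks to be assembled with an extension fibration $E$ over $S^2\setminus\bigsqcup D_i^2$, and the Section \ref{cutpaste} construction produces a relatively minimal singular fibration inducing $\rho$. Strong equivalence corresponds exactly to the choice of a trivialization of a fixed base fiber, which acts as simultaneous conjugation by $SL(2,\Z)$ on the tuple $(\phi_i)$; this yields the first bijection. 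For the second bijection, a self-homeomorphism of $S^2$ that is not isotopic rel $B$ to the identity may permute and braid the singular values, translating through the standard identification of $M(S^2\setminus B)$ with the spherical braid group modulo its center into the left-composition action on homomorphisms, yielding $\mathcal M_{T^2}(S^2\setminus B)$.

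The principal obstacle is the Euler-characteristic step of the second paragraph: one must rule out all non-annular subsurface configurations in $T^2$ and then appeal to the classification of fibered links in $S^3$ with annular fiber to identify $K_i$ with a Hopf link. A secondary subtlety is the treatment of the gluing-parameter ambiguity: by Earle--Eells the identity component of $\mathrm{Diff}^+(T^2)$ is a torus rather than contractible, so a priori different gluing parameters at each $\partial D_i^2$ could yield non-equivalent fibrations with the same $\rho$; the surjectivity of the monodromy must be used to show that any such ambiguity is absorbed into the $SL(2,\Z)$-conjugation class, which requires a separate verification.
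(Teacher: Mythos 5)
Your approach to the local analysis matches the paper's: the Euler characteristic computation forcing each Milnor fiber to be an annulus (and the corresponding local link to be a Hopf link, hence the local monodromy a Dehn twist) is the same argument the paper gives in a single sentence, and your translation through the cut-and-paste description of Section~\ref{cutpaste} is also the paper's starting point.

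However, there is a genuine gap, and it sits exactly where you wave it off as a ``secondary subtlety requiring a separate verification.'' That verification \emph{is} the proof. Unlike the higher-genus case of Proposition~\ref{classification}, where Earle--Eells contractibility makes the gluing of $D^2\times F$ unambiguous, here $\pi_1(\mathrm{Diff}_0(T^2))\cong\Z\oplus\Z$ is nontrivial, so for a fixed monodromy representation $\rho$ there is a priori a $\Z\oplus\Z$-worth of non-equivalent singular fibrations realizing $\rho$, obtained by twisting the final gluing by the fiberwise diffeomorphisms $L_{m,n}$ (a multiplicity-one logarithmic transform). Your phrase that this ambiguity is ``absorbed into the $SL(2,\Z)$-conjugation class'' is not the right statement: the twist does not change $\rho$ or its conjugacy class at all, so it cannot be absorbed into the conjugation equivalence on the algebraic side. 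What must be shown is that all the resulting fibrations $f_{(m,n)}$ are strongly equivalent to $f_{(0,0)}$, and this is precisely where the surjectivity hypothesis enters. The paper proves this by dragging a regular neighborhood of a fiber around a peripheral loop $\gamma$, which produces a strong equivalence $f_{\mathbf m}\simeq f_{\rho(\gamma)\mathbf m}$, and then, using surjectivity of $\rho$ onto $SL(2,\Z)$ and the cocycle-like relation $f_{\mathbf m+\mathbf m'}\simeq(f_{\mathbf m})_{\mathbf m'}$, performs an explicit chain of reductions from $f_{(m,n)}$ down to $f_{(0,0)}$. Without supplying this argument, the injectivity of your proposed bijection onto $M_{T^2}(S^2\setminus B)$ is unestablished, and the proof is incomplete at its central point.
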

\begin{proof}
The proof is an adaptation of Moishezon's proof from \cite[Part II, section 2]{Moishe}. 
Since the fiber $F$ is a torus every local Milnor fiber $F_0$ is an annulus, 
since its  complement contains no disks. Therefore peripheral loops are sent into powers of Dehn twists. However, the monodromy of a fibered link whose fiber is an annulus 
is a (left or right) Dehn twist.

We have to prove next that a disk block $X\to D^2$ with torus fibers extends uniquely over 
the complementary disk of $D^2$ within $S^2$. Since the boundary fibration is trivial the gluing parameters correspond to homotopy classes of fiberwise diffeomorphisms of $S^1\times T^2$, namely to elements of the fundamental group $\pi_1({\rm Diff}_0(T^2))$ of the group of 
diffeomorphisms of $T^2$ which are isotopic to the identity. According to \cite{EE} the map which associates 
to $(m,n)\in\Z\oplus \Z$ the fiberwise diffeomorphism $L_{m,n}:S^1\times T^2\to S^1\times T^2$
\[ L_{m,n}(\theta,\alpha,\beta)=(\theta,\alpha+m\beta,\beta+n\theta), \; \theta\in S^1, (\alpha,\beta)\in S^1\times S^1=T^2,\]
induces an isomorphism between $\Z\oplus\Z$ and $\pi_1({\rm Diff}_0(T^2))$. 
If $f:M\to S^2$ is a singular fibration and $(m,n)\in\Z\oplus \Z$, we denote by
$f_{(m,n)}:M_{(m,n)}\to S^2$ the result of excising a regular neighborhood $D^2\times T^2$ of a 
regular fiber and gluing it back by twisting the boundary identification by $L_{m,n}$ and extending the restriction of the singular fibration $f$ by the second factor projection.
This is the so-called multiplicity one logarithmic transform in the case of Lefschetz fibrations. 

Let $\gamma$ be a based loop in $S^2\setminus B$ which encircles once the singular value $q$ 
and $D^2$ a disk centered at the base point. Then there exists an isotopy $\phi_t$ of $S^2$ which is the identity on a neighborhood of $q$ such that 
$\phi_1$ sends $D^2$ in itself, which drags $D^2$ along the curve $\gamma$. 
Let $\rho:\pi_1(S^2\setminus B)\to  SL(2,\Z)$ be the monodromy of the singular fibration $f$.  
The isotopy $\phi_t$ lifts to $M$ and provides a strong equivalence 
between the singular fibrations $f_{\mathbf m}$ and $f_{\rho(\gamma)\mathbf m}$, for any $\mathbf m\in \Z\oplus\Z$. Moreover, $f_{\mathbf m+\mathbf m'}$ is equivalent to 
 $\left(f_{\mathbf m}\right)_{\mathbf m'}$.  
Since $\rho$ is surjective, we obtain successive  equivalences between 
$f_{(m,n)}$, $f_{(m+n,n)}$, $\left(f_{(m+n,0)}\right)_{(0,n)}$, 
$\left(f_{(m+n,0)}\right)_{(-n,0)}$, $f_{(m,0)}$, $f_{(m,m)}$, 
$\left(f_{(m,0)}\right)_{(0,m)}$, $\left(f_{(m,0)}\right)_{(-m,0)}$  and $f_{(0,0)}$.
This proves the claim.       
\end{proof}

Before we proceed with the higher genus singular fibrations, we have to introduce more notation. 
Let $A(F)\subset M(F)$ denote the set of mapping classes arising as monodromies 
of fibered knots in $S^3$ with fibers which are subsurfaces contained in $F$. 

Let now $\phi\in A(F)$. The question to be addressed is whether $\phi$ determines 
uniquely the isotopy class of the subsurface $F_0\subset F$, which is 
the local Milnor fiber of the fibered link determined by $\phi$. 

To this purpose we define the {\em support} $sup(\phi)$ of the mapping class $\phi$ as follows. 
For any subsurface $F_0\subset F$, there is an induced homomorphism  
$\iota_{F_0}:M(F_0)\to M(F)$. The support $sup(\phi)$ is the smallest essential subsurface $F_0$, 
defined up to isotopy in $F$,  
with the property that $\phi\in \iota_{F_0}(M(F_0))$. 

Let $F_0$ and $F_1$ be two subsurfaces such that 
$\phi\in \iota_{F_i}(M(F_i))$, $i\in\{0,1\}$. We can use isotopies such that isotopic boundary components of $F_0$ and $F_1$  coincide and nonisotopic boundary components 
are transverse. Transverse boundary components are either disjoint 
or intersecting in the minimal possible number of points within their isotopy classes.  
If there are no intersections between $\partial F_0$ and $\partial F_1$, then 
$F_0\cap F_1$ is a subsurface strictly contained in both $F_i$ whose Euler characteristic 
must be strictly larger. Moreover, $\phi\in \iota_{F_0\cap F_1}(M(F_0\cap F_1))$. 
If two components $C_i\subseteq \partial F_i$ have nontrivial intersection 
number, then the arc $C_1\cap F_0$ should be essential in $F_0$. It follows that 
the surface $F_2$ obtained from $F_0$ by removing a regular neighborhood of 
the arc $C_1\cap F_0$ has strictly larger Euler characteristic. Moreover 
$\phi\in \iota_{F_2}(M(F_2))$. These arguments prove the existence of the support 
for any $\phi\in M(F)$. Note that $sup(\phi)$  is trivial if only if $\phi$ is the identity. 
  
We further define $M(F)^*\subset M(F)$ to be the set of those $\phi$ for which the support 
$sup(\phi)$ is connected. If $F_0$ is a connected subsurface and $\phi$ is the image 
by $\iota_{F_0}$ of a pseudo-Anosov mapping class, then $\phi\in M(F)^*$. On the other hand the 
product of  at least two Dehn twists along disjoint simple closed curves does not belong to 
$M(F)^*$.

The classification of singular fibrations $M^4\to S^2$ whose generic fiber $F$ has genus 
$g$  is then similar to that of Lefschetz fibrations, if we restrict the peripheral monodromies.  
Set 
$A(F)^*=M(F^*)\cap A(F)$. A singular fibration is {\em admissible} if 
its peripheral loop monodromies belong to $A(F)^*$.

Consider now a critical point of a singular fibration, whose local Milnor fiber is $F_0$ and the
generic fiber $F$ has genus $g\geq 2$. 
We say that the singular point is {\em  relatively minimal} if the complement 
$F\setminus {\rm int}(F_0)$ of the local Milnor fiber within the generic fiber $F$ 
is injective, namely it does not contain neither disk nor annular components. 
Moreover the singular fibration is  {\em  relatively minimal} if all its 
singular points are relatively minimal.

\begin{proposition}\label{classification}
Relatively minimal admissible singular fibrations on $S^2$ with branch locus $B$, 
generic fiber $F$ of genus $g\geq 2$, 
up to strong equivalence are in one-to-one correspondence with the set $M_{F}(S^2\setminus B)$ of homomorphisms $\pi_1(S^2\setminus B)\to  M(F)$ sending peripheral loops into $A(F)^*$ modulo 
the conjugacy action by $M(F)$. Furthermore, the classes of these 
relatively minimal singular fibrations up to equivalence are in one-to-one correspondence with the 
set  $\mathcal M_{F}(S^2\setminus B)$ of orbits of the mapping class group $M(S^2\setminus B)$ 
action on  $M_{F}(S^2\setminus B)$ by left composition.  
\end{proposition}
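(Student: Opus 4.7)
The plan is to extend the scheme of Proposition \ref{classificationg1}, exploiting a key simplification available in genus $g\ge 2$: by the Earle--Eells theorem \cite{EE}, the identity component ${\rm Diff}_0(F)$ is contractible, so the gluing parameters that required the separate log-transform argument in the torus case vanish entirely, and the correspondence between fibrations and monodromy data becomes strictly tighter.

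In the forward direction, a relatively minimal admissible singular fibration $f:M^4\to S^2$ restricts to a locally trivial $F$-bundle over $S^2\setminus B$, whose holonomy yields a monodromy homomorphism $\rho:\pi_1(S^2\setminus B)\to M(F)$ well-defined up to conjugation by $M(F)$. A peripheral loop around $q\in B$ has monodromy equal to the monodromy of the local fibered link at $q$, which by the admissibility assumption lies in $A(F)^*$. Strongly equivalent fibrations produce conjugate monodromies, because a strong equivalence restricts over $S^2\setminus B$ to a bundle isomorphism covering an isotopy of the identity on the base.

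For the inverse construction, given $\rho\in M_F(S^2\setminus B)$, I would first form the flat $F$-bundle $E\to S^2\setminus B$ prescribed by $\rho$, unique up to bundle isomorphism depending on the conjugacy class of $\rho$. Over a small disk $D_b^2$ around each $b\in B$, the peripheral monodromy $\phi_b\in A(F)^*$ has a connected support $sup(\phi_b)\subset F$ well-defined up to isotopy; restricted to $sup(\phi_b)$, the class $\phi_b$ is the open-book monodromy of a fibered knot in $S^3$, and the Waldhausen-type uniqueness result for oriented fibered links in homology spheres recalled from \cite{GG} guarantees this knot is unique up to isotopy. The associated disk block $X_b\to D_b^2$ is therefore canonical, and the freedom in its gluing onto $E|_{\partial D_b^2}$ is controlled by $\pi_0({\rm Diff}_0(F))$ and $\pi_1({\rm Diff}_0(F))$, both trivial for $g\ge 2$. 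The resulting singular fibration has monodromy $\rho$ by construction.

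It remains to verify that the two constructions are mutually inverse up to the stated equivalences, and to upgrade strong equivalence to equivalence. The crux of the first point is that, starting from a fibration, the support $sup(\phi_b)$ recovered abstractly from the peripheral monodromy must coincide with the actual local Milnor fiber sitting inside the generic fiber; this is where relative minimality plays its role, as it forbids the complement of the Milnor fiber in $F$ from containing disk or annular components, pinning down $sup(\phi_b)$ as the genuine Milnor fiber rather than a proper subsurface. The upgrade from strong equivalence to equivalence is then an orbit computation: a homeomorphism $\varphi:S^2\to S^2$ permuting $B$ acts on monodromy data by precomposition with the induced isomorphism on $\pi_1(S^2\setminus B)$, and this action factors through $M(S^2\setminus B)$. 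The main obstacle I anticipate is the support analysis just described: one must check carefully that under both admissibility ($\phi_b\in A(F)^*$) and relative minimality, the identification of $sup(\phi_b)$ with the Milnor fiber is canonical and unambiguous, since dropping either hypothesis allows distinct disk blocks to share the same peripheral mapping class.
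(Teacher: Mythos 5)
Your plan follows the same strategy as the paper (monodromy homomorphism in one direction, flat bundle plus disk blocks in the other, Earle--Eells to kill the gluing ambiguity, and a support analysis as the crux), so the architecture is right. But the proposal has a genuine gap exactly where you flag your ``main obstacle'': you assert that relative minimality, by forbidding disk or annular components in $F\setminus F_0$, ``pins down $sup(\phi_b)$ as the genuine Milnor fiber,'' yet this does not follow directly from the definition, and the paper's argument runs through quite different machinery. The constraint on the Milnor fiber is not about the complement $F\setminus F_0$ at all; it comes from the requirement that the expansion $X^4 = D^4 \cup_{N(K_0)} \Sigma\times D^2$ (where $\Sigma$ is the putative excess of the Milnor fiber over the support) must again be a $4$-disk. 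The paper extracts from this, via Van Kampen (each component of $K_0$ lies in a single component of $\Sigma$) and Mayer--Vietoris ($H_1(\Sigma)\cong H_1(N(K_0))$), that $\Sigma$ must consist of annuli each meeting $F_0$ in one boundary curve, so the Milnor fiber is isotopic to the support. A parallel argument handles the case where the restriction $\phi_0$ is not the monodromy of a fibered link in $S^3$ at all: there the same $4$-manifold computation produces a contradiction. Neither branch of this dichotomy is visible in your plan, and neither follows from the Waldhausen-type uniqueness you invoke (which identifies the open book once the fiber surface is known, but does not decide which subsurface of $F$ is the fiber).

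A smaller point on the other half: for the gluing over the complementary disk in $S^2$, the paper does not just invoke contractibility of ${\rm Diff}_0(F)$; it observes that an arbitrary (not a priori fiber-preserving) diffeomorphism of $S^1\times F$ sends $S^1\times *$ to itself up to isotopy, reduces the gluing to attaching a $2$-handle with its natural framing plus $3$- and $4$-handles, and then appeals to Laudenbach--Poenaru for the uniqueness of the latter. Your version, which controls the gluing only through $\pi_0$ and $\pi_1$ of ${\rm Diff}_0(F)$, only handles fiberwise identifications and does not by itself show the resulting closed $4$-manifold is independent of the choice; the Laudenbach--Poenaru step is what closes that loop. These are the two places to fill in to turn your plan into a proof.
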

\begin{proof}
A singular fibration whose singular points have distinct images 
is obtained from the fiber sum of several disk blocks $f_i:X_i\to D_i^2$, by gluing the trivial 
bundle $F\times D^2$  in a fiberwise way. When $g\geq 2$, any diffeomorphism of $S^1\times F$ should send $S^1\times *$ to itself, up to isotopy and orientation reversing. 
Gluing $D^2\times F$ amounts therefore to  first adjoin a 2-handle along this curve with its natural framing and further adding 3-handles and 4-handles corresponding to the handle decomposition of $F$. According to \cite{LP} the second step can be done in a unique way. 
This implies that the 4-manifold  $M(f_1,f_2,\ldots,f_n)$ is uniquely determined, up to diffeomorphism, by the disk blocks involved.  
 
It remains to see that a disk block $f:X\to D^2$ with a single critical point is determined by 
its boundary fibration. This is generally not true, unless the singular point is 
relatively minimal, as we shall see later. 
The  monodromy of its boundary fibration over $S^1$ is an element 
$\phi\in A(F)^*$. We claim that $\phi$ already determines the local Milnor fiber $F_0$ and hence the fibered knot $K$. 
Let $F_0\subset F$ be the support of $\phi$, so that $\phi=\iota_{F_0}(\phi_0)$, for some 
$\phi_0\in M(F_0)$, to be called the restriction of $\phi$ to $F_0$. The restriction  
is uniquely determined because the map induced by the inclusion 
$M(F_0)\to M(F)$ is injective according to the main result of \cite{PR}.

Assume first that $\phi_0$ is the monodromy of a fibered link $K_0$, namely that 
the mapping torus $M_{\phi_0}$ is homeomorphic to $S^3-K_0$. Let $D^4$ be the 4-disk bounded by $S^3$.  We claim that by augmenting the support $F_0$ to a larger subsurface $F_0\subset F_1\subseteq F$ the corresponding expansion $\phi_1$ of $\phi_0$ by the identity on $F_1-F_0$ is not anymore a fibered link monodromy, 
unless $\Sigma=F_1\setminus {\rm int}(F_0)$ is a disjoint union of annuli, each annulus intersecting $F_0$ 
in a single boundary component.   
Indeed, we need that $X^4=D^4\cup_{N(K_0)} \Sigma\times D^2$ be homeomorphic to $D^4$, 
where $N(K_0)$ denotes a tubular neighborhood of $K_0$ within $S^3=\partial D^4$.
The fundamental group  $\pi_1(X^4)$ must be trivial and hence, by Van Kampen, 
each connected component of $K_0$ belongs to a single connected component of $\Sigma$. 
By Mayer--Vietoris, $H_1(\Sigma)\cong H_1(N(K_0))$. If one component of $\Sigma$ were 
a disk, then $F_0$ would be inessential in $F$. Thus all components of $\Sigma$ have nontrivial 
homology and hence they all must be annuli. 

Consider next the case when $\phi_0$ is not the monodromy of a fibered knot, namely that 
the compact mapping torus $M_{\phi_0}$ is not homeomorphic to 
a link complement in $S^3$. This is equivalent to the fact that Dehn surgery on 
$M_{\phi_0}$ produces a 3-manifold $M^3\neq S^3$. On the other hand the cores of the solid tori arising in the Dehn surgery process  
form a link $K_0\subset M^3$ which is fibered with monodromy $\phi_0$. 
By hypothesis there exists some surface $\Sigma$ such that the expansion of $M_{\phi_0}$ 
by $\Sigma$, namely, $E=M_{\phi_0}\cup_{N(K_0)} \Sigma\times S^1$, is homeomorphic to the 
closed complement $S^3-N(K_1)$ of some link $K_1\subset S^3$. Moreover, $S^3-N(K_1)$ 
has a fibration over $S^1$ with fiber $F_1$ which is part of an open book decomposition. Now $S^3$ bounds a disk $D^4$ which contains the trivial product $\Sigma\times D^2\subset D^4$. The embedding of $\Sigma\times D^2$ into 
$D^4$ is such that $\Sigma\times S^1\subset E$ is the standard inclusion. Therefore, 
the closure $Z^4$ of $D^4-\Sigma\times D^2$ is a 4-manifold with  boundary $M^3$. 
Now  $\pi_1(Z^4\cup_{N(K_0)} \Sigma\times D^2)=\pi_1(D^4)=1$ and by Van Kampen 
each connected component of $K$ is contained in a single connected component of $\Sigma$. 
Moreover, the normal closure of the image of $\pi_1(K)$ into $\pi_1(\Sigma)$ should be the whole group $\pi_1(\Sigma)$ and thus $\pi_1(K)\to\pi_1(\Sigma)$  is onto. Therefore 
$\Sigma$ consists of annuli. It follows that $F_1$ is homeomorphic to $F_0$ and 
actually $Z^4$ is just a smaller copy of $D^4$, contradicting our assumptions.   
\end{proof}

\begin{remark}
The assumption that $sup(\phi)$ be connected is necessary in order to have uniqueness 
of the fiber  subsurface $F_0$ with  monodromy $\phi$. Indeed, let
$F$ be a surface of genus $g\geq 3$ and $c,d\subset F$ be two simple curves which bound together 
a 2-holed torus embedded in $F$. Let $a,b,c,d$ be a pants decomposition of this 2-holed torus
into two pairs of pants $F_1$ and $F_2$.     
Then the product of the two Dehn twists along the simple curves 
$a$ and $b$ is the monodromy of a fibered link with fiber which can be either one of the 
subsurfaces $F_i$.  
\end{remark}

\begin{remark}
Mapping classes $\phi\in M(F)$ having  essential/injective 
support $sup(\phi)$  strictly smaller than $F$ cannot be torsion. Indeed 
$M(F_0)$ is torsion free, when $F_0$ has boundary. Note that there exist 
examples of holomorphic maps on complex surfaces whose associated monodromy 
is torsion, as is the case of elliptic fibrations. However, these critical points are not relatively minimal. For instance Kodaira singularities of type II, III and IV have local Milnor fibers  1-holed, 2-holed and 3-holed tori, respectively. 
The generic torus fiber $F$ is obtained by capping off the 
boundary circles by disks in each case and the monodromies 
are of finite order  $\left(\begin{array}{cc} 1 & 1 \\ -1 & 0 \end{array}\right)$, $\left(\begin{array}{cc} 0 & 1 \\ -1 & 0 \end{array}\right)$ and 
$\left(\begin{array}{cc} 0 & 1 \\ -1 & -1 \end{array}\right)$, respectively.

\end{remark}

\begin{remark}
Note that we can as well consider instead of closed manifolds $M^4$ more general 
open manifolds, as complements of 2-complexes into closed 4-manifolds and singular fibrations over 
higher genus surfaces instead of the 2-sphere. The generic fiber is then a possibly punctured 
surface $F$ which might have boundary.  Braided surfaces as considered in \cite{FP20,Kamada2} 
are then particular cases of singular fibrations whose generic fibers are punctured disks.  
\end{remark}

\begin{remark}
We considered above the topological equivalence of singular fibrations. One reason is that 
the local link $K$ determines uniquely the germ $\psi_K$ up to topological equivalence. 
Morse singularities are known to be locally smoothly equivalent, but there are germs 
of isolated singularities which are topologically equivalent while not smoothly equivalent. 
However, the topological equivalence of singular fibrations can be upgraded to smooth equivalence if we fix for each fibered knot $K$ a local model like $\psi_K$ and restrict ourselves to  
those singular fibrations which are locally smoothly equivalent to one of the given local models. 
\end{remark}

\section{Constructing singular fibrations}
\subsection{Deformations of singular fibrations}
We say that a singularity is {\em $\pm$-holomorphic} if it is locally topologically equivalent to a 
holomorphic germ with an isolated singularity. We emphasize that the local homeomorphism 
need not preserve the orientation. 
Problem \ref{charconj} above has an affirmative answer when the singularities are $\pm$-holomorphic, a result which seems folklore (see e.g. \cite{AC1,NR}):  

\begin{proposition}\label{holomorphic}
If there exists $f:M^4\to N^2$ with finitely many $\pm$-holomorphic singularities then  
$M^4$ admits an achiral Lefschetz fibration on $N^2$.
\end{proposition}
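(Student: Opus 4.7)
The plan is to show that each $\pm$-holomorphic critical point $p$ of $f$ can be replaced, by a perturbation of $f$ supported in an adapted neighborhood of $p$, by a cluster of quadratic singularities, without altering $f$ outside that neighborhood. Performing this finitely many times yields a smooth map $\tilde f:M^4\to N^2$ all of whose singularities are quadratic, i.e.\ an achiral Lefschetz fibration, as required.

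First I would fix a $\pm$-holomorphic singularity $p$ with adapted neighborhood $Z\cong D^4$, so that $f|_Z$ is topologically equivalent to either a holomorphic germ $g:(\C^2,0)\to(\C,0)$ with an isolated singularity, or to the composition of such a germ with complex conjugation in the target. The two cases are interchanged by an orientation-reversing coordinate change on $N^2$, which takes positive into negative Hopf local links; so it suffices to treat the holomorphic case, and the anti-holomorphic one will contribute the negative Hopf (achiral) part of the final Lefschetz fibration.

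Next I would invoke the classical Morsification theorem for isolated holomorphic singularities (see e.g.\ Milnor's book and the work of A'Campo referenced in the paper): there exists a one-parameter holomorphic deformation $g_s:(\C^2,0)\to(\C,0)$ with $g_0=g$ such that, for all sufficiently small $s\neq 0$, the critical points of $g_s$ in a fixed Milnor ball $B_\varepsilon$ are all non-degenerate complex Morse points, exactly $\mu(g)$ in number, and $g_s$ remains transverse to $\partial B_\varepsilon$. Consequently the open book on $\partial B_\varepsilon$ induced by $g_s$ is isotopic to the Milnor open book of $g$. Each Morse point of $g_s$ has the local form $z_1^2+z_2^2$ up to biholomorphism, i.e.\ a quadratic singularity with positive Hopf link.

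To re-glue, I would choose a smaller disk $Z'\subset Z$ containing $p$ and a bump function $\chi$ equal to $1$ on $Z'$ and $0$ near $\partial Z$, and define $\tilde f$ to equal $f$ outside $Z$ and to interpolate linearly in the target from $g$ to $g_s$ via $\chi$ inside $Z$. On the transition annulus $Z\setminus Z'$ the map $g$ is a submersion with $\|dg\|\geq c>0$, while $\|g_s-g\|_{C^1}=O(s)$; for $|s|\ll c$ the interpolation therefore remains a submersion on $Z\setminus Z'$ and introduces no new critical points, and the boundary open book is $O(s)$-close to the original one, hence isotopic to it by Ehresmann. The only critical points of $\tilde f$ inside $Z$ are the $\mu(g)$ positive Hopf singularities of $g_s$ in $Z'$. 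The hard step, which needs to be carried out with some care, is precisely this simultaneous control of the interpolation: keeping the perturbation both $C^1$-small enough to avoid spurious critical points in the collar and aligned with the original boundary fibration. Once this is done for each singular point independently, the resulting $\tilde f$ is an achiral Lefschetz fibration over $N^2$, concluding the proof.
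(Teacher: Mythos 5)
Your proof is correct and takes essentially the same approach as the paper: morsify each $\pm$-holomorphic germ into a cluster of complex/anti-complex Morse points inside a Milnor tube, then glue the perturbed map back into $f$ on a collar where the original germ is a submersion. The paper uses a linear perturbation $f_t=f+tL$ and an ambient isotopy of tubes where you invoke the general Morsification theorem and a bump-function interpolation with a $C^1$-smallness estimate, but these are two ways of spelling out the same local surgery.
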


\begin{proof}
Let $p$ be a critical point of the smooth map $f:M^4\to N^2$. 
There exist some open neighborhoods $p\in U\subset M$, $f(p)\in V\subset N$ such that, 
up to left/right composition by diffeomorphisms, the restriction 
$f:U\to V$ is $\pm$-holomorphic with an isolated singularity at $p$ 
and we can assume that $U\subset \C^2$, $V\subset \C$.  
Therefore  there exists some Milnor ball $B\subset U$, such that for a small enough 
disk $D\subset f(B)\subset V$ centered at $f(p)$ the local fiber $f^{-1}(z)$
is transverse to $\partial B$ for any 
$z\in D_{\delta}$.

Moreover, Milnor has proved that for small enough $D$ and $B$ the 
tube $T=f^{-1}(D)\cap B$ is actually diffeomorphic 
to a 4-disk. A word of caution is needed, since $\partial T$ is a piecewise smooth 
3-sphere.  Actually ${\rm int}(T)$ is diffeomorphic to  ${\rm int}(D^4)$ 
by a diffeomorphism extending to a piecewise smooth homeomorphism 
between the closures.  

Let $L:\C^2\to \C$ be a linear map.  It is well-known (see \cite{HL}) that 
$f_t=f+tL:\C^2\to \C$   has only finitely many critical points in $T$ for small $t$.  
Moreover, for generic $L$ we can choose $\mu>0$ with the property that $f_t$ has 
only a finite number of nondegenerate critical points for all $0 <t\leq \mu$, we have 
$D\subset f_t(B)\subset V$ and 
for any $z\in D$ the local fiber $f_t^{-1}(z)$
is transverse to $\partial B$. In particular, we find again that 
$T_t=f_t^{-1}(D)\cap B_{\varepsilon}$ is  diffeomorphic 
to a 4-disk ambiently isotopic to $D$.

Up to composing with an ambient isotopy  sending  $T$ into $T_t$ we can assume that 
$T_t=T$. This means that $f|_{T}$ is isotopic rel boundary to a map $f_t$ 
having only quadratic singularities and $f|_{\partial T}={f_t}|_{T}$. 
Replacing $f|_T$ by ${f_t}|_T$ we obtain a continuous map with only quadratic 
singularities. 

In order to obtain a smooth function one should note that not only $T_t$ is 
isotopic to $T$ but also the link $f^{-1}(z)\cap \partial B$ is isotopic to 
$f_t^{-1}(z)\cap \partial B$ and the corresponding open book decompositions 
are isotopic. This provides an isotopy between $f|_T$ and a map which coincides  with the restriction of $f_t$ to a slightly smaller ball $T'\subset T$ and which has no critical points within $T\setminus T'$. 

Consequently we can change $f$ by a small isotopy centered around a critical point such that the new function has only quadratic singularities (of either orientation). 
\end{proof}

\begin{remark}
Note that if $f$ were holomorphic, the modified map is not necessarily 
holomorphic anymore. 
\end{remark}

\subsection{Shells}
We can refine the computation of $\varphi_s(M^4,N^2)$ by restricting the family of maps $f:M^4\to N^2$ allowed, for instance by prescribing the  
topological type of the generic fiber $F$. Note that the isotopy type of 
the embedding $F\subset M^4$ is independent of the choice of the fiber, for a 
singular fibration. 

We observed in the previous section that the topology of a singular fibration 
$f:M^4\to N^2$ is captured by a disk block $f:X^4\to D^2$ over 
some disk $D^2\subset N^2$ containing all the critical values, along with a fibration 
over $N^2\setminus {\rm int}(D^2)$. Thus singular fibrations with the same 
isotopy classes of generic fibers all come from distinct singular 
fibrations of a disk block with prescribed monodromy.

\begin{definition}
The {\em shell} of a disk block $X^4\to D^2$ is the homeomorphism type 
of the manifold $X^4$ along with its vertical 
boundary fibration  $\partial X^4\setminus {\rm int}(\partial F\times D^2)\to S^1$.
Then, two singular fibrations $X^4\to D^2$  are {\em shell equivalent} if they have isomorphic shells, namely if 
there is a homeomorphism between the disk blocks which restricts to an isomorphism 
between the vertical boundary fibrations lifting the identity.  
\end{definition} 
We can use surgery on disk blocks having the same shell, in order 
to simplify singular fibrations without changing the source and target. 
To understand  disk blocks having the same shell we need a description of the 
topology of the disk block $X^4$ along with the boundary monodromy. 
Although these two invariants are related, they do not determine each other.

\subsection{Spines}
Recall that $C(L)$ denote the cone over $L$. 
To every surface $F$ and a collection  $\mathbf \Gamma=\{\Gamma_i\}$ of graphs embedded in $F$ 
we associate the 2-polyhedron:
\[ \Pi(F, \mathbf \Gamma)= F \cup\cup_i C(\Gamma_i).\]

Let $f:X^4\to D^2$ be a disk block with generic fiber $F$ and $F_i$  the local Milnor fibres of 
its singularities. We write $A\searrow B$ if there is a triangulation of $A$ for which $A$ collapses onto $B$. 
Let $\Gamma_i\subset F_i$ be a {\em trivalent spine} of $F_i$, namely,  an embedded graph 
whose vertices are of degree 3 such that $
F_i\searrow  \Gamma_i$. To such a collection  $\mathbf \Gamma=\{\Gamma_i\}$ of graphs we associate the polyhedron:  
\[ \Pi(f)=\Pi(F,\mathbf\Gamma).\]

\begin{proposition}
If $f:X^4\to D^2$ is a disk block, then $X^4\searrow \Pi(f)$.  
\end{proposition}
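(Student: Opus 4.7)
The plan is to construct the collapse $X^4 \searrow \Pi(f)$ using the disk block decomposition of Section 2.3,
\[
X^4 = \Big(\bigsqcup_j D_j^4\Big) \cup_{\sqcup_j N(K_j)} (Z \times D^2),
\]
and to collapse the two types of pieces in a way compatible with the common gluing regions $\sqcup_j N(K_j)$.

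First I would collapse the trivial piece. Fix a basepoint $* \in \partial D^2$; the standard collapse $D^2 \searrow \{*\}$ in the second factor produces
\[
Z \times D^2 \searrow Z \times \{*\} \cup \partial Z \times D^2,
\]
while leaving the gluing region $\partial Z \times D^2$ intact. Second, for each local model $\psi_{K_j} : D_j^4 \to D^2$, I claim
\[
D_j^4 \searrow F_j \cup C(\Gamma_j) \cup N(K_j),
\]
where $F_j \subset \partial D_j^4$ is the page of the local open book over $*$ and $C(\Gamma_j)$ is the cone over the trivalent spine $\Gamma_j \subset F_j$ with apex at the critical point $p_j$. Pick a radial arc $\gamma \subset D^2$ joining $0$ to $*$. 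Then $D^2 \searrow \gamma$ as a cellular collapse, and since $D^2 \setminus \gamma$ is simply connected the bundle $\psi_{K_j}$ trivialises over it; this lets me lift the base collapse to $D_j^4 \searrow \psi_{K_j}^{-1}(\gamma) \cup N(K_j)$. The preimage $A_j := \psi_{K_j}^{-1}(\gamma)$ is the mapping cylinder of the vanishing map $\nu_j : F_j \to C(K_j)$, which sends $\Gamma_j$ to $p_j$ and is the identity on $K_j = \partial F_j$. A further cellular collapse---inductive on dimension and using that $F_j \searrow \Gamma_j$ freely and $C(K_j) \searrow \{p_j\}$---reduces $A_j$ to $F_j \cup C(\Gamma_j)$.

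Combining the two collapses, which agree on $\sqcup_j N(K_j)$, and finally collapsing each solid torus $N(K_j) = K_j \times D^2$ onto $K_j \subset F_j$, one obtains
\[
X^4 \searrow \Big(Z \cup \bigcup_j F_j\Big) \cup \bigcup_j C(\Gamma_j) = F \cup \bigcup_j C(\Gamma_j) = \Pi(f).
\]
The main obstacle is the cellular collapse $A_j \searrow F_j \cup C(\Gamma_j)$: since the open book monodromy $\phi_j$ is in general nontrivial one cannot hope for a global fibrewise retraction, so the argument must be genuinely cell-by-cell after fixing a PL triangulation of $D_j^4$ in which the cone from $p_j$, the page $F_j$, the spine $\Gamma_j$, and the binding $K_j$ are all subcomplexes. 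Under the mapping-cylinder identification by $\nu_j$, the cells of $A_j \setminus (F_j \cup C(\Gamma_j))$ assemble from the collars $F_j \setminus \Gamma_j$ and $C(K_j) \setminus \{p_j\}$ into $I$-bundles over $K_j$ that collapse freely onto $K_j \subset F_j$, finishing the argument.
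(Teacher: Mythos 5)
Your proposal takes essentially the same route as the paper: collapse the 4-manifold over a $1$-complex in the base $D^2$ (your radial arcs, the paper's bouquet of intervals), identify the result as a surface with cones attached, and finish with the collapse coming from $F_i\searrow\Gamma_i$. The organization differs slightly, in that you work piecewise through the explicit decomposition $X^4=\bigsqcup_j D_j^4\cup(Z\times D^2)$, whereas the paper collapses $X^4$ globally onto $f^{-1}(\bigvee_i I_i)$ and analyzes that preimage directly; both reduce to the same observation.

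One point to correct. The preimage $A_j=\psi_{K_j}^{-1}(\gamma)$ over a radial arc $\gamma$ from $0$ to $*$ is \emph{not} the mapping cylinder of a vanishing map $\nu_j:F_j\to C(K_j)$ — it is literally the cone $C(F_j)$ with apex $p_j$. Indeed, chasing the formula defining $\psi_{\mathcal E}$ one checks that $y\in\psi_{K_j}^{-1}(\gamma)$ if and only if either $y=0$ or $y/\|y\|$ lies in the page over $*$; thus $A_j=\{ty: t\in[0,1],\ y\in F_j\}=C(F_j)$ as a subset of $D_j^4$, with $C(K_j)=\psi_{K_j}^{-1}(0)$ sitting inside it as the sub-cone $C(\partial F_j)\subset C(F_j)$. (Concretely: the regular fibers over $\gamma(s)$ with $s>0$ are \emph{not} rescaled copies of $F_j$ that limit onto $C(K_j)$; the part of those fibers outside $N(K_j)$ shrinks toward the apex while the collar inside $N(K_j)$ sweeps out $C(K_j)$ in the limit, and together they fill out exactly $C(F_j)$.) This actually simplifies your argument: once $A_j\cong C(F_j)$ is in hand, the claimed collapse $A_j\searrow F_j\cup C(\Gamma_j)$ is an instance of the standard fact that $K\searrow L$ implies $C(K)\searrow K\cup C(L)$, applied to $F_j\searrow\Gamma_j$. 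That is precisely the step $F\cup\bigcup_i C(F_i)\searrow F\cup\bigcup_i C(\Gamma_i)$ in the paper's proof, so there is no need for the cell-by-cell analysis of a mapping cylinder that your last paragraph anticipates as the ``main obstacle.'' With this correction the rest of the assembly — the product collapse of $Z\times D^2$, the compatibility over $\bigsqcup_j N(K_j)$, and the final collapse of each $N(K_j)$ onto $K_j\subset F_j$ — goes through and yields $\Pi(f)$.
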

\begin{proof}
Note that  a singular fiber around the singularity $p_i$ is homeomorphic to 
the cone over $\partial F_i$. Thus the singular fiber is homeomorphic to 
the quotient $F/F_i$. Let $\bigvee_i I_i$ denote the bouquet of intervals $I_i=[0,1]$ 
having the endpoints $0_i$ of each $I_i$ identified together. 

The polyhedron $\left(F\times\bigvee_i I_i\right)\cup\cup_i C(F_i)$, where $C(F_i)$ is attached to 
$F\times {1_i}$ is identified with  
the preimage $f^{-1}(\bigvee_iI_i)$, where $\bigvee_iI_i\subset D^2$ is a bouquet of 
intervals embedded in $D^2$ such that the points $1_i$ coincide with the critical values of $f$. 
Moreover, $X^4$ retracts onto the polyhedron $\left(F\times\bigvee_i I_i\right)\cup\cup_i C(F_i)$.
Note that $\Pi(F,\mathbf\Gamma)$ embeds in $X^4$ and  
\[ \left(F\times\bigvee_i I_i\right)\cup\cup_i C(F_i)\searrow F\times \{0\}\cup_i (F_i\times I_i\cup C(F_i))\cong 
F\cup\cup_i C(F_i)\searrow F\cup\cup_i C(\Gamma_i).\] 
Neumann and Rudolph (\cite{NR}, section 2, Lemma 2.1) described this collapse by pictures in the case when $f$ is the restriction of a singular fibration $D^4\to D^2$.  
\end{proof}

By using isotopies we can assume that the graphs $\Gamma_i$ have pairwise transverse intersections.  
In particular $\Gamma=\cup_i\Gamma_i\subset F$ is a graph whose vertices have valence 3 or 4.

\subsection{Shadows} 
When $\Gamma_i$ are all simple closed curves, i.e. $f$ is an achiral Lefschetz fibration, 
then $\Gamma$ is a 4-valent graph. 
In this case $\Pi(f)$ is a {\em simple} 2-polyhedron without boundary, namely every point of it has a neighborhood homeomorphic to either a plane (regular point), the union of three half-planes meeting along a common boundary line (edge point) or the cone over the 1-dimensional skeleton of a tetrahedron (vertex point). The  set of singular points of $\Pi(f)$ is the graph $\Gamma$.  
Note that the embedding of $\Pi(f)$ in $X^4$ is {\em locally flat}, namely around each point 
there exists a PL embedded $\R^3$ in which the neighborhood in $\Pi(f)$ is homeomorphic to  one of the standard embeddings of the 3-dimensional local models above.

The 2-polyhedron $\Pi(f)$ does not determine uniquely the disk block $X^4$. 
Since $X^4$ retracts onto the 2-polyhedron $\Pi(f)$, 
$X^4$ is a 2-handlebody, namely it has a handlebody decomposition with only 0, 1 and 2-handles.
Turaev defined in \cite[chap. IX]{Tu} an integer shadowed polyhedron structure on 
simple spines of 2-handlebodies. 

The connected components of  the set of regular points of a simple polyhedron $\Pi$ 
are called {\em regions}. 
The inclusion of the closure $\overline{R}$  of a region $R$ into $\Pi$ 
extends to a local homeomorphism  from a simple polyhedron $\widetilde{R}\supset \overline{R}$ into $\Pi$ whose image is a regular neighborhood of $\overline{R}$. 
The polyhedron $\widetilde{R}$ is obtained from 
$\overline{R}$ by adjoining a number of annuli and M\"obius bands to each boundary component of 
$\overline{R}$ and is uniquely determined by $\Pi$. The parity of the number of M\"obius bands required for each region $R$ is an intrinsic invariant of $\Pi$, called the $\Z/2\Z$-gleam.  The  $\Z/2\Z$-gleam is not necessarily trivial, though   $X^4$ is an orientable 
4-manifold. The presence of M\"obius bands is precisely the obstruction for the 2-polyhedron to admit a thickening to a 3-manifold, see (\cite{HAM}, p.38-39).

An integral shadow on the simple polyhedron $\Pi$ is the choice of a half-integer, called {\em gleam}, for each region of $\Pi$ lifting the canonical $\Z/2\Z$-valued gleam on $\Pi$, namely, such that the difference with  half of the $\Z/2\Z$-gleam is an integer for each region.

When $\Pi$ is a simple  locally flat polyhedron in a 4-manifold $X^4$  
there is a natural integer  gleam  $gl$ on $\Pi$ , see \cite[chap IX, section 1.6]{Tu}.
We consider the 4-dimensional neighborhood of $\widetilde{R}$ induced from $X^4$. 
Then $\widetilde{R}-R$ defines a line normal to  $\overline{R}$ along $\partial \overline{R}$
and hence a section of the projectivized normal bundle over $\overline{R}$. The obstruction 
to extending this section over  $\overline{R}$ is an element of 
$H^2(\overline{R},\partial \overline{R};\Z)$ which can be identified to $\Z$, by fixing an orientation on $R$. The value of the gleam on $R$ is then half  of this integer.

Given an integer shadowed 2-polyhedron (with empty boundary) $(\Pi, gl)$ 
Turaev defined  in \cite[chap IX, section 6]{Tu} a canonical thickening 
$\Theta(\Pi,gl)$, which is well-determined up to orientation-preserving PL homeomorphism. The canonical thickening permits to recover the disk block of an achiral Lefschetz fibration: 
\[ X^4 = \Theta(\Pi(f),gl(f))\]

The Matveev-Piergallini moves on simple 2-polyhedra  
permit to relate any two simple polyhedra having the same 3-deformation type.  Shadow equivalence 
amounts to relate integer shadowed 2-polyhedra by natural extension of these moves, as 
defined by Turaev in \cite[chap VIII, section 1.3]{Tu}.  
However, two integer shadowed simple spines of the same 2-handlebody are not necessarily shadow equivalent. The stabilization of an integer shadowed 2-polyhedron is obtained by gluing 
a number of disks  along  boundaries of disks away from the singular set, 
with trivial gleam. The canonical thickening of a stabilization corresponds to 
boundary connected sum with  copies of $S^2\times D^2$.

The following is then equivalent to \cite[Thm.1.7, Thm. 6.2, chap.IX]{Tu}:
\begin{theorem}\label{Turaev-shadows}
If the disk blocks of the achiral Lefschetz fibrations $f$ and $g$ are PL homeomorphic  
then the integer shadowed polyhedra 
$(\Pi(f),gl(f))$ and $(\Pi(g),gl(g))$ have stabilizations which are shadow equivalent. 

Further, if  the integer shadowed polyhedra $(\Pi(f),gl(f))$ and $(\Pi(g),gl(g))$ 
associated with achiral Lefschetz fibrations $f$ and $g$ are shadow equivalent, then 
the corresponding disk blocs are PL homeomorphic.  
\end{theorem}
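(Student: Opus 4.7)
The strategy is to recognize Theorem~\ref{Turaev-shadows} as a direct specialization of Turaev's classification of oriented 2-handlebodies by stably shadow equivalent integer shadowed simple polyhedra, once the data $(\Pi(f),gl(f))$ attached to an achiral Lefschetz fibration $f:X^4\to D^2$ is identified with Turaev's combinatorial input. First I would verify that $\Pi(f)$ is a locally flat simple spine of the 2-handlebody $X^4$. Local flatness follows from inspecting each local Milnor model, where the vanishing cycle $\Gamma_i$ is a circle and the cone $C(\Gamma_i)$ is a smoothly embedded vanishing disk meeting the generic fiber $F$ transversally along $\Gamma_i$; simpleness is arranged by taking the vanishing cycles to intersect pairwise transversally on $F$, so that $\Pi(f)$ has only triple lines and tetrahedral vertex points. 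The collapse $X^4\searrow \Pi(f)$ from the preceding proposition then exhibits $X^4$ as a 2-handlebody, so we are in the setting of \cite[chap.IX]{Tu}.

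Next I would check that the combinatorial gleam $gl(f)$ coincides with the obstruction-theoretic integer gleam carried by any locally flat simple spine. This is a region-by-region local computation. For a region $R$ contained in $F$ and away from any $C(\Gamma_i)$, the normal line to $F$ in $X^4$ extends over $\overline{R}$ and contributes $0$; for a region $R$ equal to (a piece of) a cone $C(\Gamma_i)$, the gleam records the self-intersection of the vanishing disk in $X^4$, which equals $\pm 1$ according to the sign of the achiral singularity and matches the standard Kirby-calculus picture of a $\pm 1$-framed 2-handle attached along the vanishing cycle. The $\Z/2\Z$ reduction of $gl(f)$ agrees by construction with the intrinsic $\Z/2\Z$-gleam of the embedded polyhedron, so the integer lift is the correct one.

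With these identifications in place, the two assertions are essentially translations of Turaev's theorems. The first assertion is \cite[chap.IX, Thm.6.2]{Tu}: PL homeomorphic 2-handlebodies have simple spines whose integer shadows become shadow equivalent after stabilization by trivially-gleamed 2-disks (corresponding to boundary connected sums with $S^2\times D^2$); applied to $\Pi(f)$ and $\Pi(g)$ sitting inside identified disk blocks this yields the required stable shadow equivalence. The second assertion is \cite[chap.IX, Thm.1.7]{Tu}: the canonical thickening $\Theta$ is well-defined on shadow equivalence classes, so the identification $X^4=\Theta(\Pi(f),gl(f))$ transports shadow equivalence of the polyhedra to PL homeomorphism of the disk blocks. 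The main obstacle I anticipate is purely bookkeeping: reconciling the two sign conventions (positive versus negative Hopf link at an achiral quadratic singularity) with Turaev's half-integer lift of the $\Z/2\Z$-gleam, which depends on the chosen orientation of each region; no new geometric input should be required beyond the collapse proposition already proved.
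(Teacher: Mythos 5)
Your proposal is correct and follows exactly the route the paper takes: the theorem is stated there as a translation of Turaev's Theorems 1.7 and 6.2 of \cite[chap.\ IX]{Tu}, once one observes that $\Pi(f)$ is a locally flat simple spine of the 2-handlebody $X^4$ and that $gl(f)$ is the obstruction-theoretic gleam carried by the embedding, so that $X^4=\Theta(\Pi(f),gl(f))$ and the two directions become, respectively, Turaev's stable classification of 2-handlebodies by their shadows and the well-definedness of the canonical thickening on shadow equivalence classes. One small remark: your second paragraph sets out to ``check that the combinatorial gleam $gl(f)$ coincides with the obstruction-theoretic integer gleam,'' but in the paper $gl(f)$ is \emph{defined} as the obstruction-theoretic gleam of the locally flat embedding $\Pi(f)\subset X^4$; there is no independent combinatorial gleam to reconcile, so that verification is vacuous (it would only be relevant if one first computed gleams from a Kirby diagram and then matched them, which is a useful sanity check but not part of this proof). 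The remaining sign-bookkeeping you anticipate, e.g.\ that a positive quadratic singularity produces a $-1$-framed 2-handle relative to the fiber framing and hence gleam $-1$ on the corresponding disk region, is consistent with the $-\epsilon_i$ framings recorded in the paper, but again is not needed to invoke Turaev's theorems.
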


A'Campo considered in \cite{AC} a construction of a large family of fibered links from a connected {\em divide}, namely a curve immersion into the disk. 
The monodromy of these fibered links can be recovered directly in terms of the combinatorics of the  underlying divide (see \cite{AC},  Section 3, Thm. 2). 
Ishikawa and Naoe  explained in \cite{IN} how a shadowed polyhedron is associated with any admissible divide. The corresponding 4-manifold associated with such a shadowed polyhedron admits a natural Lefschetz fibration which
is isomorphic to the Lefschetz fibration associated with the admissible divide. 
We wonder whether, more generally, two shell equivalent achiral Lefschetz fibrations $f$ and $g$ have shadow equivalent integer shadowed polyhedra 
$(\Pi(f),gl(f))$ and $(\Pi(g),gl(g))$. This is not true if we do not require 
the polyhedra to have the same vertical boundary monodromy.

\begin{remark}\label{2handlebody}
Recall that for a disk block $f:X^4\to D^2$ the source manifold  
$X^4$ is a 2-handlebody. 
On the other hand according to \cite{Harer} $X^4$ has an achiral Lefschetz fibration 
over $g:X^4\to D^2$ if and only if $X^4$ is a 2-handlebody. 
However $f$ and $g$ are not necessarily shell equivalent, since  the vertical boundary 
fibrations might differ.  
\end{remark}

\begin{remark}
The shell equivalence class of a disk block $X^4\to D^2$ containing a single critical point
is determined by the monodromy mapping class, up to conjugacy, and the embedding of the local Milnor fiber into the generic fiber. 
More generally, the shell equivalence class of a disk block having several  critical points is determined by the diagonal conjugacy class of the ordered local monodromies (depending on a fixed system of arcs) and the embeddings of the local Milnor fibers in the generic fiber. In particular, the global monodromy of the vertical boundary fibration might not be enough to determine the diffeomorphism 
type of the disk block.    
\end{remark}

\begin{remark}
To recover $X^4$ from its spine $\Pi(f)$ in general we also need the corresponding monodromies 
$\phi_i\in M(F_i)$ of the local links of singularities, subject to the 
relation $\phi_1\phi_2\cdots\phi_n=\phi$, where $\phi$ is the monodromy of the vertical 
boundary fibration.  
\end{remark}

\subsection{Unfoldings}\label{unfoldings} 
The case when the disk block $f:X^4\to D^2$ is the singular fibration associated 
with an adapted neighborhood of a cone-like singularity is particularly interesting. 
In this situation $X^4=D^4$ and the disk block is described by the (open book decomposition of the) local link $K$ of its singularity.   
In \cite{NR} Neumann and Rudolph defined an {\em unfolding} of the fibered link $K$ into the 
collection of fibered links $K_1,K_2,\ldots, K_p$ as being a 
shell equivalent singular fibration whose singular points have local links $K_1,K_2,\ldots, K_p$. 
Note that $K_j$ are actually {\em oriented} links, because 
oriented fibered links in $S^3$ are endowed with 
open book decompositions well-defined up to isotopy. 

For instance a complex algebraic link $K$ (i.e. the link of an isolated singularity of a complex polynomial map $\C^2\to \C$)  unfolds to the collection of $\mu(K)$ positive Hopf links, where $\mu(K)$ denotes the 
{\em Milnor number} of $K$, namely the first Betti number of its local 
Milnor fiber. In (\cite{NR2}, p. 110-111) the authors note that more generally closures of 
homogeneous braids are links which unfold completely into Hopf links. 
We will see later that the Hopf invariant $\lambda(K)$ of oriented fibered links $K$ 
(see \cite{Rud} and the next section) is preserved in unfoldings, namely it is the sum 
of $\lambda(K_i)$ for any unfolding of $K$ into a collection $K_i$ of links. 
Furthermore $\lambda(K)$ vanishes for all local links of isolated singularities of 
holomorphic maps. Moreover, $\lambda(K)$ is positive for the mirror images of local links of isolated singularities of holomorphic maps. Neumann and Rudolph and later Inaba (\cite{Inaba,NR3}) 
constructed fibered links occurring as  local links of real algebraic isolated singularities with arbitrarily prescribed $\lambda(K)\in\Z$.  When  
$\lambda(K)$ is negative,  the link $K$ cannot unfold to a collection of  local links of $\pm$-holomorphic maps,  in particular to (positive or negative) Hopf links.

Given a disk block $f:D^4\to D^2$, we can construct an achiral Lefschetz fibration $g:X^4\to D^2$ with the same monodromy $\phi$, by choosing a  
sequence of simple curves $\gamma_i$   and 
$\varepsilon_i\in\{-1,+1\}$ such that $\phi=\prod_i T_{\gamma_i}^{\varepsilon_i}$. 
However, $X^4$ might not be anymore diffeomorphic to $D^4$, unless the associated 2-polyhedron $\Pi(F, \Gamma)$ is collapsible 
(see \cite[Thm. 2.4]{NR}). Observe that $X^4$ is obtained from 
$F\times D^2$ by adjoining 2-handles along the curves $\gamma_1,\gamma_2,\ldots,\gamma_n$, with the framings 
$-\epsilon_1,-\epsilon_2,\ldots, -\epsilon_n$, respectively (see 
\cite{GS}, 8.2).

A collection of fibered links is an unfolding of a  
connected sum of its members. Instead of the fiber-sum above we can 
therefore use {\em Murasugi sums} of disk blocks 
$f_i:D^4\to D^2$, $i=1,2$ with collapsible spines $\Pi(f_i)$. 
Set $F=F_1\#_{D}F_2$ for the union along a disk $D$ 
of the fibers $F_i$ such that $\partial D\subset \partial F_1\cup \partial F_2$. Observe that $\Pi(f_1)\#_{D}\Pi(f_2)$ is also collapsible and hence its regular neighborhood is PL homeomorphic to $D^4$.
Therefore, when expanding the fibers $F_i$ we obtain a disk block 
$f:D^4\to D^2$ with generic fiber $F$ and monodromy 
$\iota_{F_1}(\phi_1)\iota_{F_2}(\phi_2)$, 
where $\phi_i$ are the monodromies of the $f_i$. 
This construction corresponds to the Murasugi sum of links, when $f_i$ 
are Milnor fibrations. Thus a collection of fibered links is an unfolding 
of any Murasugi sum of its constituents.  
Recall that one uses the term {\em plumbing}, when each $\partial F_i$ contains two arcs of $\partial D$.

Neumann and Rudolph (\cite{NR3}) considered the Grothendieck group of  isotopy classes of oriented fibered links, where  $K$ is the formal 
sum of $K_i$ if $K$ unfolds to the collection $K_i$.   Specifically, two links are {\em stably unfolding equivalent} if 
they become unfolding equivalent after summing with the same collection of links. The Grothendieck group  of fibered links was described by Giroux and Goodman in \cite{GG}:

\begin{theorem}\label{Giroux-Goodman}
Two collections of oriented links in $S^3$ are stably unfolding equivalent if and only if their total Milnor number $\mu$ and total Hopf invariant $\lambda$ agree.  
\end{theorem}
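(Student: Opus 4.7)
The forward direction is essentially a bookkeeping exercise using additivity. Both $\mu$ (the sum of the first Betti numbers of the local Milnor fibers) and $\lambda$ (the Hopf invariant, recalled in the previous section to satisfy $\lambda(K) = \sum_i \lambda(K_i)$ whenever $K$ unfolds to the collection $\{K_i\}$) descend to homomorphisms from the Grothendieck semigroup of oriented fibered links to $\Z$. Hence stably unfolding equivalent collections must share $(\mu,\lambda)$.

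For the backward direction, my plan is to establish that $(\mu,\lambda)$ is in fact a complete invariant. I would first fix canonical building blocks: the positive Hopf link $H_+$ with $(\mu,\lambda)=(1,0)$, the negative Hopf link $H_-$ with $(\mu,\lambda)=(1,1)$, and a Neumann--Rudolph link $K_-$ realizing some negative value of $\lambda$ (as constructed in \cite{NR3,Inaba}). From these I would define, for every $(m,\ell)\in\Z^2$, a model collection $\mathcal K(m,\ell)$ realizing those invariants, typically using $H_+$'s and $H_-$'s when $0 \le \ell \le m$ and summing in copies of $K_-$ (and extra $H_+$'s to adjust $\mu$) in the remaining cases. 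The task is then to prove that every fibered link $K$ is stably unfolding equivalent to $\mathcal K(\mu(K),\lambda(K))$.

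The heart of the argument is the Giroux correspondence: every open book on $S^3$ (i.e.\ every fibered link) determines a compatible contact structure $\xi_K$ via the Thurston--Winkelnkemper construction, and two open books support isotopic contact structures if and only if they are related by a finite sequence of positive Hopf stabilizations. A positive Hopf stabilization is precisely a Murasugi sum with $H_+$, hence an instance of stable unfolding equivalence by the discussion in Section \ref{unfoldings}. This reduces the problem to the classification of contact structures on $S^3$: Eliashberg's classification says that overtwisted contact structures are determined up to isotopy by the homotopy type of the underlying oriented 2-plane field, which on $S^3$ is captured by a single integer ($d_3$), and this integer coincides with $\lambda(K)$ up to normalization. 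To reach the overtwisted regime one first performs a negative Hopf stabilization (Murasugi sum with $H_-$), bumping $(\mu,\lambda)$ by $(1,1)$; once both collections support overtwisted structures with equal $\lambda$ they become positive-stabilization equivalent, and further positive stabilizations adjust $\mu$ to match. The tight case is handled by observing that $(\mu,\lambda)=(0,0)$ is uniquely realized by the unknotted open book.

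The main obstacle, in my view, is pinning down the precise identification of $\lambda$ with $d_3(\xi_K)$ — including signs, normalizations, and any framing corrections — together with the verification that the stabilization operations of contact topology (positive and negative Giroux--Mohsen stabilization) are honest instances of unfolding equivalence in the paper's sense, not merely formally analogous notions. A subsidiary issue is the cancellation problem inherent to any Grothendieck-group statement: one must check that, after adding the same collection to both sides to land in the overtwisted regime, genuine unfolding equivalence can be produced, which is exactly where the word \emph{stable} in ``stable unfolding equivalence'' does essential work.
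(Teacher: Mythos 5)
The paper does not give a proof of this theorem at all: it is stated as the description of the Grothendieck group of fibered links and is attributed wholesale to Giroux--Goodman \cite{GG}, with the remark that they actually proved the stronger statement that any two fibered links in $S^3$ are related by plumbing and deplumbing Hopf bands (positive or negative). Your proposal is therefore not an alternative route to the paper's argument but rather an outline of how one would re-derive the cited theorem from more primitive inputs --- the Giroux open-book/contact correspondence and Eliashberg's classification of overtwisted contact structures --- which is indeed the strategy of the Giroux--Goodman paper itself. Your forward direction (additivity of $\mu$ and $\lambda$) is correct and matches what the paper uses elsewhere (e.g.\ Corollary \ref{unfoldeq}).

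For the backward direction, your outline is sound in spirit but it is a program rather than a proof, and the two obstacles you flag are genuinely the technical heart of the matter. Beyond those, two points could be tightened. First, the ``tight case'' needs no separate treatment: since a single negative Hopf plumbing makes any supported contact structure overtwisted while preserving equality of the pairs $(\mu,\lambda)$, one always passes to the overtwisted regime before applying Eliashberg, and the tight structure on $S^3$ never has to be discussed. Second, the phrase ``further positive stabilizations adjust $\mu$ to match'' suggests a step that is not needed: once $K_1$ and $K_2$ have been stabilized to support the same overtwisted contact structure, Giroux's stabilization theorem provides isotopic positive stabilizations $K_1 \#\, k_1 H_+$ and $K_2 \#\, k_2 H_+$; the hypothesis $\mu(K_1)=\mu(K_2)$ then forces $k_1 = k_2$ because isotopic open books have equal page genus, and the stable unfolding equivalence drops out with no additional adjustment. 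You should also make explicit the reduction from collections of links to a single connected open book --- this uses that a disjoint collection unfolds from any Murasugi sum of its members, as recalled in Section \ref{unfoldings}. With those points addressed, your sketch is a correct roadmap to a proof; it is simply far longer than what the paper offers, which is a reference.
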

Actually Giroux-Goldman proved in \cite{GG} the stronger result that 
any two fibered links in $S^3$ can be obtained one from another by 
plumbing and deplumbing Hopf bands. 
%Thus singular fibrations 
%$D^4\to D^2$  are  {\em  stably shell equivalent}, i.e. they become shell equivalent after plumbing with achiral Lefschetz fibrations.  
%\begin{corollary}\label{unfoldeq}
%A singular fibration $f:D^4\to D^2$ is stably shell equivalent to an achiral Lefschetz fibration if and only if  $0\leq \lambda(f)\leq \mu(f)$, where $\lambda(f)$ and $\mu(f)$ denote the total Hopf invariant and the 
%total Milnor number of the associated local links. Moreover, the number of
%quadratic singularities of the Lefschetz fibration is $\mu(f)$, among which 
%$\lambda(f)$ are positively oriented and $\mu(f)-\lambda(f)$ are 
%negatively oriented.  
%\end{corollary}
As an immediate corollary we derive: 
\begin{corollary}\label{unfoldeq}
A collection of fibered links is stably unfolding equivalent to a  collection of Hopf links if and only if  $0\leq \lambda(f)\leq \mu(f)$, where $\lambda(f)$ and $\mu(f)$ denote the total Hopf invariant and the 
total Milnor number of the associated local links. 
\end{corollary}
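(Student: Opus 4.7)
My plan is to deduce the corollary directly from Theorem~\ref{Giroux-Goodman}, once I have identified the image of the pair $(\mu,\lambda)$ on finite collections of Hopf links.

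First I would pin down the invariants on the two Hopf links. The positive Hopf link $H_+$ is the local link of the holomorphic germ $z_1^2+z_2^2$; its Milnor fiber is an annulus, so $\mu(H_+)=1$, and since $H_+$ arises from a holomorphic germ, $\lambda(H_+)=0$ by the vanishing statement recalled just before Theorem~\ref{Giroux-Goodman}. The negative Hopf link $H_-$ is the mirror of $H_+$, so its Milnor fiber is again an annulus, giving $\mu(H_-)=1$; the same discussion guarantees $\lambda(H_-)>0$, and the standard normalization of the Hopf invariant of Rudolph~\cite{Rud} (or, concretely, the framing calculation on an annulus with a single negative twist) forces $\lambda(H_-)=1$.

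Next, additivity of $\mu$ on disjoint unions, together with additivity of $\lambda$ in unfoldings (also recalled just before Theorem~\ref{Giroux-Goodman}), shows that a collection consisting of $a$ copies of $H_+$ and $b$ copies of $H_-$ has total invariants $(\mu,\lambda)=(a+b,\,b)$. Letting $a,b$ range over $\Z_{\geq 0}$, the achievable integer pairs are precisely those with $0\le \lambda\le \mu$.

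To conclude, I would invoke Theorem~\ref{Giroux-Goodman}: the collection $f$ is stably unfolding equivalent to some collection of Hopf links if and only if $(\mu(f),\lambda(f))$ is realized by some choice of $a,b\ge 0$ as above, i.e.\ if and only if $0\le \lambda(f)\le \mu(f)$. The main point that requires care is the computation $\lambda(H_-)=1$, which is really a question of normalization of the Hopf invariant; once that value is in place, the rest of the argument is formal bookkeeping through Giroux--Goodman.
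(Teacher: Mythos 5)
Your argument follows the same route as the paper: compute $(\mu,\lambda)$ on the two Hopf links, observe that nonnegative integer combinations of the two resulting vectors $(1,0)$ and $(1,1)$ sweep out exactly the pairs with $0\le\lambda\le\mu$, and then cite Theorem~\ref{Giroux-Goodman}. The one point worth flagging is that your values $\lambda(H_+)=0$, $\lambda(H_-)=1$ are the \emph{opposite} of what the paper's proof asserts (``$\lambda$ equals $1$ for the positive Hopf link and $0$ for the negative Hopf link''). Your assignment is in fact the one consistent with the paper's own earlier statements in Sections~\ref{unfoldings} and 4.2: $\lambda$ vanishes on links of holomorphic germs (so $\lambda(H_+)=0$), together with Rudolph's mirror formula $\lambda(\overline{K})=\rho(K)$ and $\mu=\lambda+\rho$, which give $\lambda(H_-)=\rho(H_+)=1$. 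So the line in the paper's proof appears to have the two values swapped; fortunately the conclusion is symmetric in $\lambda\leftrightarrow\mu-\lambda$ and the corollary holds with either normalization. Your proposal is correct.
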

\begin{proof}
Recall that $\lambda$ equals $1$ for the positive Hopf link and 
$0$ for the negative Hopf link (see \cite{Rud}), while both Hopf links have $\mu=1$.  
%The claimed inequalities are therefore verified
%for links stably unfolding equivalent to sums of Hopf links. 
\end{proof}

%By the Giroux-Goodman theorem, for any fibered link $K$ there is a sequence of plumbings by Hopf bands such that the resulting link is obtained by plumbing Hopf bands on the unknot.  The inequalities in the statement insure now that $K$ and some Hopf plumbing on the unknot become equivalent using the same number of plumbings along Hopf bands of either type (positive or negative). 
%Observe that two Lefschetz fibrations with the same number of critical points of each type become shell equivalent after plumbing with some Lefschetz fibration. This implies that the disk block associated to $K$ is stably shell equivalent to an achiral Lefschetz fibration. The proof is similar when the disk block has several singularities. 

This suggests that the answer to Problem \ref{charconj} should be   
negative for general 4-manifolds.

\begin{remark}
It seems unknown for which pairs $(a,b)\in \Z\oplus \Z$ there 
exists some fibered link $K$ such that $(\lambda(K),\mu(K)-\lambda(K))=(a,b)$. However, examples from (\cite{NR3}, Prop. 9.3 and p.96) of 
fibered links consisting of pairs of co-axial torus knots show that 
$\lambda(K)/\mu(K)$ is unbounded.  In particular,  if $S$ is a set of links  
such that every fibered link unfolds to a collection of links from $S$, then 
$S$ should be infinite. 
\end{remark}

\subsection{Group complexity} 
We can derive a group theoretical invariant out of this construction. 
A {\em surface presentation} of the group $G$ is its realization as a fundamental group of 
some polyhedron $\Pi(F,\Gamma)$, for some closed orientable surface $F$ and 
a collection of simple closed curves $\Gamma$. 

\begin{proposition}
Every finitely presented group $G$ admits a surface presentation. 
\end{proposition}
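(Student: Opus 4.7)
The plan is to realize $G$ as $\pi_1(\Pi(F,\Gamma))=\pi_1(F)/\langle\langle [\gamma]:\gamma\in\Gamma\rangle\rangle$, using that coning off a simple closed curve $\gamma\subset F$ is homotopy-equivalent to attaching a $2$-disk along $\gamma$, so each $\gamma$ kills the conjugacy class $[\gamma]$ in $\pi_1(F)$. Starting from a finite presentation $G=\langle x_1,\ldots,x_n\mid r_1,\ldots,r_k\rangle$, the first step is a Tietze transformation that produces an equivalent presentation in which every relator is a \emph{linear word}, meaning each generator appears at most once. For each relator $r_j=x_{i_{j,1}}^{\epsilon_{j,1}}\cdots x_{i_{j,\ell_j}}^{\epsilon_{j,\ell_j}}$ I introduce $\ell_j$ fresh generators $y_{j,1},\ldots,y_{j,\ell_j}$ along with the length-two identification relators $s_{j,s}=y_{j,s}\,x_{i_{j,s}}^{-\epsilon_{j,s}}$, and replace $r_j$ by the linearized relator $\tilde r_j=y_{j,1}y_{j,2}\cdots y_{j,\ell_j}$. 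The resulting presentation still defines $G$, and every one of its relators now uses each generator at most once.

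Next I set up the geometric model. Let $N=n+\sum_j\ell_j$ be the total number of generators in the linearized presentation, let $H$ be a handlebody of genus $N$ with boundary $F=\partial H$, and label the handles of $H$ by the generators $x_i$ and $y_{j,s}$. Each handle carries a simple closed meridian curve on $F$, and the full collection $M$ of these meridians consists of pairwise disjoint simple closed curves. The quotient $\pi_1(F)/\langle\langle [\mu]:\mu\in M\rangle\rangle$ is then the free group on $N$ generators, each generator being represented by the longitude of the corresponding handle.

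The geometric heart of the argument is the realization of each linear word as an embedded loop on $F$. Arranging the handles of $H$ in a row along an arc in $\R^3$, a word $z_{s_1}^{\delta_1}\cdots z_{s_p}^{\delta_p}$ with pairwise distinct indices $s_j$ is traced by a loop that enters and exits each of the relevant tubes exactly once, and such a loop can plainly be drawn on the boundary $F$ without self-intersection. Applying this observation to every linearized relator $\tilde r_j$ and every identification relator $s_{j,s}$ produces simple closed curves $\gamma_{\tilde r_j}$ and $\gamma_{s_{j,s}}$ on $F$. Setting $\Gamma=M\cup\{\gamma_{\tilde r_j}\}_j\cup\{\gamma_{s_{j,s}}\}_{j,s}$ yields a collection of simple closed curves on $F$ for which $\pi_1(F)/\langle\langle[\gamma]:\gamma\in\Gamma\rangle\rangle$ is exactly the linearized presentation of $G$, and hence $\pi_1(\Pi(F,\Gamma))\cong G$ by iterated van Kampen.

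The main obstacle is the geometric realization step, namely verifying that any element of the free group that uses each generator at most once can actually be drawn as an embedded simple closed curve on the boundary of the standard handlebody, rather than merely as an immersed one. The Tietze linearization is precisely the device that reduces the problem to this tractable case of injective words; once this is available, the fundamental group computation is a routine application of van Kampen's theorem, and the bookkeeping of generators and relators is straightforward.
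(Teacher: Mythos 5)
Your proof is correct but takes a genuinely different route from the paper's. The paper's argument is top-down: it invokes Gompf's theorem that every finitely presented group is $\pi_1$ of a closed symplectic $4$-manifold with integral symplectic class, Donaldson's existence of Lefschetz pencils, and the observation that, after blowing up the base locus, the disk block of the resulting Lefschetz fibration retracts onto a polyhedron $\Pi(F,\Gamma)$ with $\Gamma$ a collection of vanishing cycles. Your argument is bottom-up and elementary: the Tietze linearization, making every relator an injective word, is the key reduction, after which the boundary of a genus-$N$ handlebody $H$, together with its meridians and curves tracing the linearized relators, realizes the presentation directly. The paper's route is short given the black-boxed machinery and ties naturally into its Lefschetz-fibration theme; yours is self-contained, gives explicit control over the genus, and exposes the proposition as a purely combinatorial fact about $2$-complexes requiring no $4$-manifold theory. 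The step you flag as the main obstacle, the embedded realization of linear words, does go through and is worth spelling out: a linear cyclic word of length $p$ determines a perfect matching on $2p$ of the boundary circles of the $2N$-holed planar subsurface of $\partial H$; any such matching of boundary circles of a planar surface is realizable by pairwise disjoint arcs, since the matched circles can be arranged in the cyclic order dictated by the matching and the arcs drawn as adjacent chords; and the resulting simple closed curve on $\partial H$ maps to the intended conjugacy class in $\pi_1(H)$ because, under the retraction of $H$ onto its wedge of handle cores, the planar subsurface collapses to the wedge point while the handle arcs map onto the cores.
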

\begin{proof}
Gompf proved in \cite[Thm. 4.1]{Gompf} that any finitely presented group $G$ is the fundamental group 
of a closed 4-dimensional manifold $M^4$ endowed with a symplectic form, whose class in 
$H^2(M^4)$ can be assumed to be integral.  By a theorem of Donaldson (see \cite{Donald}) 
there exists a Lefschetz pencil on $M^4$ with symplectic fibers homologous to 
a multiple of the Poincar\'e dual of the class of the symplectic form. By blowing up 
the base locus we obtain a Lefschetz fibration $f$ of 
$M^4\#_{k}\overline{\mathbb C\mathbb P^2}$.  Let $X^4\to D^2$ be the disk block containing 
all singularities of $f$. Then $X^4$ retracts onto a 2-polyhedron homotopy equivalent to 
$\Pi(f)$, which is of the form $\Pi(F,\mathbf \Gamma)$, for some closed orientable surface $F$ and a collection of simple closed curves $\mathbf\Gamma$, as claimed. 
\end{proof}

We can go further and define a {\em surface-graphs presentation} of the group $G$ as its realization as a fundamental group of 
some polyhedron $\Pi(F, \Gamma)$, for some closed orientable surface $F$ and 
a collection $\Gamma$ of connected graphs. 

It is then clear that 
the number $\varphi_{s}(M^4,S^2)$ is bounded below by the minimal number of graph-relators in a surface-graphs presentation of $\pi_1(M)$.

\subsection{Singular fibrations with a single critical point}
We now aim at explaining how the classification of 
essential singular fibrations given in Proposition \ref{classification} can fail 
when nonessential singularities occur. To this purpose we consider now 
singular fibrations over $S^2$ with a single critical point. 
We first observe that not every closed oriented 4-manifold admits such a fibration: 
  
\begin{proposition}\label{unu}
\begin{enumerate}
\item 
If $M^4$ is a closed oriented 4-manifold and $\varphi_{s}(M,S^2)=1$, then $\pi_1(M)$ is 
the quotient of a free product of finitely generated free groups and surface groups by a 
subgroup isomorphic to $\Z$.

\item Let $M^4$ be a complex projective surface $M^4$ with trivial first Betti number 
whose fundamental group is neither trivial nor cyclic. If  
$M^4$ is not homeomorphic to a fiber bundle over the 2-sphere, then  
$\varphi_{s}(M^4,S^2)$ is finite and larger than 1.  
\end{enumerate}
\end{proposition}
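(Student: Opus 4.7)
The plan for part~(1) is as follows. Consider a singular fibration $f\colon M^4\to S^2$ realising $\varphi_{s}(M^4,S^2)=1$, with unique critical point $p$ over $q\in S^2$. Pick a small disk $D\subset S^2$ around $q$ and set $X=f^{-1}(D)$, the associated disk block, and $Y=f^{-1}(\overline{S^2\setminus D})$, so $M^4=X\cup_{\partial X}Y$. Since $S^2\setminus\{q\}$ is simply connected, the local monodromy at $q$ coincides with the total peripheral monodromy and is therefore trivial in $M(F)$, where $F$ denotes the (connected) generic fibre. Hence $\partial X\cong F\times S^1$ with the product fibration structure and $Y\cong F\times D^2$.

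Write $F=F_0\cup_{\partial F_0}Z$, where $F_0$ is the local Milnor fibre of $p$ and $Z$ is the complementary subsurface. I would then compute $\pi_1(X)$ using the spine $\Pi(f)=F\cup C(\Gamma)$, where $\Gamma\subset F_0$ is a trivalent spine of $F_0$. Since $\Gamma\hookrightarrow F_0$ is a homotopy equivalence, the mapping cone $F_0\cup_\Gamma C(\Gamma)$ is contractible; collapsing it in $\Pi(f)$ yields a space homotopy equivalent to $Z$ with every boundary component of $Z$ identified to a single point. If $Z$ has genus $g$ and $r\geq 1$ boundary components, then collapsing each boundary circle individually to a distinct point gives the closed surface $\Sigma_g$, and further identifying the resulting $r$ marked points produces $\Sigma_g\vee\bigvee^{r-1}S^1$, so $\pi_1(X)\cong \pi_1(\Sigma_g)*F_{r-1}$. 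Finally, $Y=F\times D^2$ admits a handle decomposition relative to $F\times S^1$ in which the unique handle of index $\leq 2$ is a $2$-handle attached along the meridian $\{*\}\times S^1$, the remaining handles having index $\geq 3$ and thus not affecting $\pi_1$. Consequently
\[\pi_1(M^4)\cong \pi_1(X)/\langle\langle[\gamma]\rangle\rangle=\bigl(\pi_1(\Sigma_g)*F_{r-1}\bigr)/\langle\langle[\gamma]\rangle\rangle,\]
where $[\gamma]$ is the class of the meridian, which proves~(1).

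For part~(2), the finiteness of $\varphi_{s}(M^4,S^2)$ would be established by an explicit construction: starting from a Lefschetz pencil on the projective surface $M^4$, one modifies the map near each base point (at the cost of introducing extra cone-like singularities of open book type) to replace the Hopf-type indeterminacy, producing a singular fibration $M^4\to S^2$ with finitely many critical points. Next, $\varphi_{s}(M^4,S^2)\neq 0$ is immediate from the hypothesis that $M^4$ is not a fibre bundle over $S^2$. To rule out $\varphi_{s}(M^4,S^2)=1$, apply part~(1): we would have $\pi_1(M^4)\cong G/\langle\langle w\rangle\rangle$ with $G=\pi_1(\Sigma_g)*F_{r-1}$ and $r\geq 1$. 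Abelianising and using $b_1(M^4)=0$,
\[0=\mathrm{rank}\,\pi_1(M^4)^{\mathrm{ab}}\geq \mathrm{rank}\,G^{\mathrm{ab}}-1=2g+r-2,\]
so $2g+r\leq 2$ with $r\geq 1$, forcing $(g,r)=(0,1)$ (giving $G=1$) or $(g,r)=(0,2)$ (giving $G=\Z$). In both cases $\pi_1(M^4)$ would be trivial or cyclic, contradicting the hypothesis; hence $\varphi_{s}(M^4,S^2)\geq 2$.

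The main obstacle I anticipate is the finiteness step in~(2): because the Hopf map on the boundary sphere of a pencil base point is homotopically essential, one cannot na\"ively replace such a base point by a single cone-like singularity of open book type, and some care is required in the surgical modification. The abelianisation and spine arguments are by contrast straightforward once the disk block description of part~(1) has been set up.
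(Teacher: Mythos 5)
Part~(1) of your argument follows the same route as the paper's: show that the disk block $X$ deformation retracts onto the spine $\Pi(f)\simeq F/F_0$, note $\partial X\cong F\times S^1$ because the boundary monodromy is trivial, and apply Van Kampen. Your explicit identification $F/F_0\simeq \Sigma_g\vee\bigvee^{r-1}S^1$ (and hence $\pi_1(X)\cong\pi_1(\Sigma_g)\ast F_{r-1}$) just unpacks what the paper states in one line, namely that $\pi_1(F/F_1)$ is a free product of finitely generated free groups and surface groups. One small caveat: you should allow $Z=\overline{F\setminus F_0}$ to be disconnected; then $F/F_0$ is a wedge of several such pieces and $\pi_1(X)$ is a free product of the corresponding $\pi_1(\Sigma_{g_i})\ast F_{r_i-1}$, which does not change the subsequent abelianisation count.

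For the inequality $\varphi_s>1$ in part~(2), your argument is a genuinely different and more elementary one than the paper's. The paper invokes Amor\'os's theorem that every presentation of the fundamental group of a non-fibered compact K\"ahler manifold with $b_1=0$ has nonnegative deficiency, then deduces $G\in\{1,\Z\}$. You instead abelianise $\pi_1(M)=G/\langle\langle w\rangle\rangle$ and use only $b_1(M)=0$ to force $\operatorname{rank}G^{\mathrm{ab}}\leq 1$, which already pins down $G$ to trivial or $\Z$ given the structure $G=\pi_1(\Sigma_g)\ast F_{r-1}$ coming from~(1). This is cleaner and does not require the K\"ahler non-fibered input beyond the hypothesis $b_1=0$. (Both arguments yield the same contradiction with $\pi_1(M)$ being neither trivial nor cyclic.)

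The one genuine gap is the finiteness of $\varphi_s(M^4,S^2)$, and you are right to flag it. A Lefschetz pencil is undefined at its base points, and since its restriction to a small $S^3$ around a base point is the Hopf fibration, which is homotopically essential in $\pi_3(S^2)$, there is no continuous extension over the ball at all, let alone one with a cone-like isolated singularity in the sense of this paper. Your proposed ``surgical modification'' is not carried out, so this step remains open in your write-up. You should note, however, that the paper's own treatment of this point is equally thin: it asserts that ``a complex projective surface has holomorphic maps onto $\CP^1$'' and that these have isolated singularities, but a nonconstant holomorphic map $M\to\CP^1$ requires a base-point-free two-dimensional linear system, hence an effective class $L$ with $L^2=0$, which fails for instance on fake projective planes (Picard rank one, $H^2=1$) -- precisely the examples highlighted in the remark after the proposition. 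So your instinct is sound: some nontrivial local construction near the pencil base points (or an entirely different mechanism) is needed to establish finiteness, and neither your sketch nor the paper's one-sentence justification currently supplies it.
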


\begin{proof}
(1) Let $X$ be the disk block containing the critical point of the singular fibration 
$M\to S^2$ with generic fiber $F$.  
If $F_1$ is the local Milnor fiber of the critical point, then 
$\Pi(f)$ is homotopy equivalent to $F/F_1$. It follows that 
$\pi_1(X)\cong \pi_1(F/F_1)$ is a free product of  finitely generated free groups and surface groups. Furthermore, $M$ is the union of $X$ and $F\times D^2$ along the 
trivial boundary fibration and hence, by Van Kampen, 
\[ \pi_1(M)=\pi_1(X)*_{\pi_1(F\times S^1)} \pi_1(F\times D^2)\cong \pi_1(X)/\pi_1(S^1).\]

(2) A complex projective surface has holomorphic maps onto $\CP^1$. These have isolated singularities and hence $\varphi_{s}(M^4,S^2)$ is finite. 

We now claim that a 4-manifold $M^4$ as in the statement cannot be a fiber bundle over the 2-sphere. Suppose the contrary and let $F$ be the fiber. We can suppose that $F$ is not a 2-sphere, as $\pi_1(M^4)$ is nontrivial.  From the long exact sequence in homotopy we derive:   
\[ 0\to \pi_2(M^4)\to \Z\to \pi_1(F)\to \pi_1(M^4)\to 0.\]
If $\pi_2(M^4)=0$, then  $\pi_1(M^4)$ is a quotient of a surface group by 
$\Z$ and hence must be trivial since its rational abelianization is trivial, which is a contradiction. 
Otherwise, $\pi_2(M^4)\subset \Z$ is isomorphic to $\Z$. As $\pi_1(F)$ is torsion-free, 
the homomorphism $\pi_2(M^4)\to \Z$ must be onto and hence 
$\pi_1(M^4)$ is isomorphic to a surface group. This contradicts our hypothesis again and 
hence our claim follows.

Eventually, assume  that $\varphi_{s}(M^4,S^2)=1$.  By the first statement of the proposition 
$\pi_1(M^4)$ is the quotient of a group $G$, which is a free product of free groups and surface groups, by a single relation. Since  $H_1(M^4;\mathbb Q)=0$, the K\"ahler manifold $M^4$ is of non-fibered type, i.e. it does not admit holomorphic maps onto a 
Riemann surface of genus at least 2. 
By \cite{Amo}, every presentation of the fundamental group 
of a non-fibered compact K\"ahler manifold with trivial first Betti number 
has positive deficiency, namely the number of relators is larger than or equal to the number of generators. This implies that $G$ is either trivial or isomorphic to $\Z$ and hence 
$\pi_1(M^4)$ is trivial or cyclic. This proves the second statement of the proposition.  
\end{proof}

\begin{remark}
Fake projective planes, also called Mumford surfaces, are well-known examples 
of complex projective surfaces with trivial first Betti number (see \cite{PY}). Their fundamental groups are arithmetic lattices in $PU(2,1)$, satisfying the conditions in Proposition \ref{unu}.  Therefore,  $\varphi_{s}(M^4,S^2)$ is finite and larger than 1, whenever $M^4$ is a fake projective plane.  
\end{remark}

We now provide a construction for singular fibrations over $S^2$ 
with a single critical point. Let $K$ be the local link of such a singularity, 
$F_0$ its Milnor fiber, $\phi\in M(F_0)$ its open book monodromy and $F$ the generic fiber 
of the singular fibration. Denote by $F_1$ and $F_2$ the result of adjoining 
to the surface $F_0$ all  disk components and all 
disk and annular components of $F\setminus {\rm int}(F_0)$, respectively. 
By language abuse we call the components of $\partial F_0$ spherical or annular if 
they belong to disks or annuli in $F\setminus {\rm int}(F_0)$. Two annular components 
form an annular pair if they bound an annulus. The remaining components will be called generic. 

In the sequel we shall assume that $F_1$ has nonempty boundary. 

A {\em braid-like} link $L$ in the product $F_1\times S^1$ is the closure 
of a braid on $F_1$, namely a link $L$ which intersects transversely every slice $F\times *$ in the same number of points.  Let $\widehat{F_1}$ denote the surface $F_1$ with all its annular boundary components capped off by 2-disks.  Given a braid-like link $L\subset F_1\times S^1$ we consider $\widehat{L}\subset \widehat{F_1}\times S^1$ 
to be the disjoint union of $L$ with $Q\times S^1$, where $Q$ are the centers of the 
disks attached to the annular points.  
Suppose that the monodromy $\widehat{\phi}$ of 
$\widehat{L}$ is the product of $\iota_{F_1}(\phi)$ with a product of Dehn twists along the annular boundary components of $\partial F_1$, 
where $\phi$ is the monodromy of $L$.

Since $F_0$ has nonempty boundary, the mapping class $\phi$ induces an automorphism of the free group $\pi_1(F_0)$, well-defined up to conjugacy. Consider $a_1,a_2,\ldots, a_N$ be 
a system of free generators for $\pi_1(F_0)$. We also denote by $b_1,b_2,\ldots,b_r$ 
the homotopy classes of based loops encircling once each one of the $r$ boundary components 
of $\partial F_0$. It is wellknown that we can choose generators such that 
$b_i=a_i$, for all $i$, if the genus of $F_0$ is positive. However, when the genus of $F_0$ is zero, we can take $b_i=a_i$, for $i\leq r-1$, while $b_r=(a_1a_2\cdots a_{r-1})^{-1}$.  
We keep this notation in order to have a unified treatment of both cases below.

Let now $n$ be the number of disk components in $F\setminus {\rm int}(F_0)$ and 
$m$ be the number of annuli. 

Then $S^3\setminus N(K)$ is identified with the mapping torus 
$M_{\phi}=F_0\times [0,1]/_{(x,0)\simeq (\phi(x),1)}$ of a homeomorphism lifting $\phi$. 
The only constraint on $\phi$ is the fact that the result of Dehn surgery 
on  $S^3\setminus N(K)$ corresponding to adding back solid tori $N(K)$ 
must be a 3-sphere. This gluing procedure is canonical. 
We glue back $S^1\times D^2$ along  a boundary torus $c\times S^1$ of 
the mapping torus $M_{\phi}$, where $c$ is a component of $\partial F$, such that 
$S^1\times p$, $p\in \partial D^2$ is sent into $c$ and 
$q\times \partial D^2$, $q\in S^1$ is sent into the {\em longitude}, i.e. the image of the arc 
$q'\times [0,1]\subset F_0\times [0,1]$ in the quotient $M_{\phi}$, where $q'\in \partial F$. 

We then denote  $\ell_s$ the (free) homotopy class of the longitude associated with 
the connected component of $\partial F_0$ indexed by $s$. 
Our aim is to compute explicitly the longitudes $\ell_s$ when 
the link $K$ arises as $\widehat{L}$, for some braid-like link in $F_1\times S^1$. 
In this case the monodromy of $K$ projects onto 
some element ${\mathbf k}\in \Z^{n+2m}$ which counts the number of left twists 
on each spherical and annular boundary component of $F_0$.  
The indices are such that $j$ and $j+m$ correspond to pairs of annular boundary components, 
when $j\geq n+1$.

\begin{proposition}\label{onecritical}
Let $M^4$ be a closed oriented 4-manifold and assume that there is a singular fibration 
$f:M^4\to S^2$ with a single critical point. Then  
\begin{enumerate}
\item the fibered link associated with the singularity is braid-like, namely 
$S^3\setminus N(K)$ is homeomorphic to the complement of a braid-like link in $F_1\times S^1$; 
\item  the monodromy $\phi$ is the product of some 
element of the framed pure braid group $FP(F_1,n)$ with  products of pairs of opposite Dehn twists 
along annular boundary components of $F_0$;    
\item Assume that $F_1$ has nonempty boundary, namely all components of $F\setminus {\rm int}(F_0)$ are not disks. Then the following  
\begin{eqnarray*} 
G(\phi)=\langle t, a_1,\ldots, a_N;\;  ta_it^{-1}=\phi(a_i), \; {\rm for}\;  1\leq i\leq N, \; \\
tb_i^{k_i}=1,  \; {\rm for}\;  1\leq i\leq n+2m, 
 t=1,\; {\rm if}\; n+2m <r\rangle
\end{eqnarray*}
is a presentation of the trivial group and moreover $k_j+k_{m+j}=0$, if $j\geq n+1$;  
\end{enumerate}
Conversely, if $\phi$ is a framed braid on $n$ strands over $F_1$ such that $G(\phi)=1$ and $k_j+k_{m+j}=0$, for $j\geq n+1$, then $\phi$ is the monodromy of a fibered link of a singular fibration with a single singularity.  
\end{proposition}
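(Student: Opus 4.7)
The plan is to reduce each assertion to a purely algebraic constraint coming from the fact that, with a single critical point, the disk block $X^4\to D^2$ is glued to $F\times D^2$ along its boundary. If this gluing is to extend to a trivial $F$-bundle over the complementary disk in $S^2$, then the monodromy of the boundary fibration of $X^4$ must be trivial in $M(F)$. This monodromy is exactly $\iota_{F_0}(\phi)$, where $\phi\in M(F_0,\partial F_0)$ is the Milnor monodromy of $K$ extended by the identity over $F\setminus F_0$. Thus everything comes from analysing $\ker\bigl(\iota_{F_0}:M(F_0,\partial F_0)\to M(F)\bigr)$.

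The key step is to factor this map through $M(F_0)\to M(F_1)\to M(F_2)\to M(F)$ and identify each kernel. A Birman-type exact sequence for the inclusion $F_0\hookrightarrow F_1$, obtained by capping the $n$ spherical boundary components by disks, shows that the first kernel is the framed pure braid group $FP(F_1,n)$, generated by pure braid motions of the $n$ caps together with boundary Dehn-twist framings. The second kernel, from $F_1\hookrightarrow F_2$ obtained by gluing $m$ annuli to annular pairs, is generated by the differences $T_{c_j}T_{c_{j+m}}^{-1}$, since a glued annulus identifies its two bounding Dehn twists. The third kernel, from the inclusion $F_2\hookrightarrow F$, contains no Dehn twist along a generic boundary component, as such twists survive inside any non-disk, non-annulus filler by the subsurface injectivity results of \cite{PR}. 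Putting this together yields (2): $\phi=\beta\cdot\delta$ with $\beta\in FP(F_1,n)$ and $\delta$ a product of $T_{c_j}^{k_j}T_{c_{j+m}}^{-k_j}$, forcing $k_j+k_{j+m}=0$. Claim (1) is then immediate: Dehn-filling the $n$ solid tori around the spherical components of $K$ absorbs the pure braid part into the trivial $F_1$-bundle over $S^1$, leaving the $n$ strands of the braid as a braid-like link in $F_1\times S^1$ whose complement is $S^3\setminus N(K)$.

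For (3) I apply Van Kampen to the Dehn surgery description of $S^3$. The mapping torus of $\phi$ contributes the presentation $\langle t,a_1,\ldots,a_N\mid ta_it^{-1}=\phi(a_i)\rangle$. Each filled component of $K$ adds the relation $tb_i^{k_i}=1$, where $k_i$ is the framing read off from $\phi$; the constraint $k_j+k_{j+m}=0$ is exactly the annular-pair compatibility established above. For a generic component ($i>n+2m$) the decomposition in (2) forbids any Dehn twist along $b_i$, so the framing is $0$ and the filling relation degenerates to $t=1$. Since $\pi_1(S^3)=1$, this recovers exactly $G(\phi)=1$.

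For the converse I would run the construction backwards. Given $\phi$ as in the statement, form the mapping torus $M_\phi$ of $\phi$ on $F_0$, Dehn-fill each boundary torus with the prescribed meridian $tb_i^{k_i}$ (or $t$ on the generic tori), and obtain a closed oriented $3$-manifold $Y^3$ whose fundamental group is $G(\phi)=1$ by Van Kampen. Perelman's resolution of the Poincar\'e conjecture then gives $Y^3\cong S^3$, so $\phi$ is realised as the monodromy of a fibered link $K\subset S^3$. The same kernel analysis, applied in reverse, guarantees that $\iota_{F_0}(\phi)=1$ in $M(F)$, so the boundary fibration of the disk block $X^4=D^4\cup((F\setminus F_0)\times D^2)$ is trivial and $M^4:=X^4\cup(F\times D^2)$ is a closed oriented $4$-manifold carrying the desired singular fibration with a single critical point. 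The main obstacle I expect is the precise identification of the three kernels, especially proving that no Dehn twist along a generic boundary component lies in the total kernel; a secondary but unavoidable point is the essential appeal to the Poincar\'e conjecture in the converse.
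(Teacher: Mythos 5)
Your proposal is correct and follows essentially the same route as the paper's proof: the same filtration $F_0\subset F_1\subset F_2\subset F$ with the kernels identified via Paris--Rolfsen (the framed pure braid group for $M(F_0)\to M(F_1)$, annular pairs for $M(F_1)\to M(F_2)$, and injectivity of $M(F_2)\to M(F)$), the same Van Kampen / graph-of-groups computation of $\pi_1(M_\phi\cup N(K))$ giving the presentation $G(\phi)$ with longitude classes $tb_i^{k_i}$, $tb_j^{\pm m_j}$, or $t$, and the same invocation of Perelman's resolution of the Poincar\'e conjecture for the converse. The paper additionally cites Bellingeri--Gervais for the splitting $FP(F_1,n)\cong P(F_1,n)\times\Z^n$, a detail you leave implicit but which does not alter the overall argument.
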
 
\begin{proof}
Paris and Rolfsen determined in \cite{PR} the kernel of 
the homomorphism $M(F_0)\to M(F)$ induced by the embedding $F_0\to F$. 
More precisely,  $M(F_2)\to M(F)$ is injective while 
$\ker(M(F_1)\to M(F_2))$ is freely generated by the products of 
opposite Dehn twists along annular pairs in $\partial F_1$.  
Eventually, $K=\ker(M(F_0)\to M(F))$ is the 
so-called framed pure braid on $n$ strands on $F_1$. Let $A\subset M(F_0)$ be the 
free abelian group generated by the products of opposite Dehn twists 
along annular pairs in  $\partial F_0$. Then  
$\ker(M(F_0)\to M(F))$ is the subgroup $KA$, which  
is identified with $K\times A$, because $A$ is central. 
We supposed above that $F_1$ has boundary.

Further, according to \cite{BG} the framed pure braid group $K$ is isomorphic to 
the direct product $P(F_1,n)\times \Z^n$ of the pure braid group $P(F_1,n)$ on $n$ strands 
on $F_1$ and the free abelian group  of Dehn twists along the spherical components. 
Note that the factor $\Z^n$ corresponds to the group of Dehn twists along spherical components of $\partial F_1$. 
Hence $\ker(M(F_0)\to M(F))$ is isomorphic to the product 
$P(F_1,n)\times \Z^n\times \Z^m$. Here $\Z^m$ is the subgroup of the group 
$\Z^{2m}$ of Dehn twists along annular components of elements $\mathbf k$ with $k_j+k_{j+m}=0$, when $n+1\leq j\leq n+m$.

Consider now $\phi\in \ker(M(F_0)\to M(F))$ and $\mathbf k\in \Z^{n+2m}$ as above. 
Then the fundamental group of the mapping torus $M_{\phi}$ is the HNN extension of the free group $\pi_1(F_0)$ 
in the generators $a_i$, which admits the presentation:  
\[ \pi_1(M_{\phi})=\langle t, a_1,\ldots, a_N;\;  ta_it^{-1}=\phi(a_i), 1\leq i\leq N\rangle.\]
Further, $\pi_1(M_{\phi}\cup N(K))$ is the fundamental group of the graph of groups 
with one root vertex labeled $\pi_1(M_{\phi})$, the other vertex groups being $\Z$ and  
$N$ edges connecting the root to the vertices, all labeled $\Z\oplus \Z$. 
Therefore $\pi_1(M_{\phi}\cup N(K))$ is obtained from  $\pi_1(M_{\phi})$ by 
adjoining all relations corresponding to 
longitudes. However the class of a longitude associated to a component of $\partial F_0$ 
in $\pi_1(M_{\phi})$ reads either $tb_i^{k_i}$, $tb_j^{\pm m_j}$ or $t$, according to whether 
it is a spherical, an annular or a generic component, respectively. 
Thus $\pi_1(M_{\phi}\cup N(K))$ has the presentation:
\begin{eqnarray*} 
\langle t, a_1,\ldots, a_N;\;  ta_it^{-1}=\phi(a_i), \; {\rm for}\;  1\leq i\leq N, \; 
tb_i^{k_i}=1,  \; {\rm for}\;  1\leq i\leq n+2m, \\
\; t=1,\; {\rm if}\; n+2m <r\rangle.
\end{eqnarray*}
By Perelman's solution to the Poincar\'e conjecture, $\pi_1(M_{\phi}\cup N(K))$ is trivial if and only if $M_{\phi}\cup N(K)$ is a sphere, namely the fibration of $M_{\phi}$ over $S^1$ comes from an open book structure of a fibered link in $S^3$. 
This settles the proposition.   
\end{proof}

Computing the longitudes $\ell_i$ in $\pi_1(M_{\phi})$ seems more complicated 
when $F_1$ has no boundary, as the framed pure braid group does not split as a product
(see \cite{BG}).

\subsection{Explicit examples}\label{examples}  
We will analyze the case when $F_0$ is a 3-holed sphere. 
Then $M(F_0)=\Z^3$ is freely generated by the Dehn twists along the boundary components. 
In this case we do not need $F_1$ to have nonempty boundary. 
Let the monodromy be $\phi=T_{\gamma_1}^{k_1}T_{\gamma_2}^{k_2}T_{\gamma_3}^{k_3}$, 
where $\gamma_i$ are the boundary curves. By direct inspection the group 
$G(\phi)$ is trivial if and only if 
\[ k_1k_2+k_2k_3+k_3k_1\in\{-1,1\}.\] 
Then, up to a permutation of the $k_i$'s we have: 
\[(k_1,k_2,k_3)\in\{(\pm 1, \pm 1, 0), (2,-1,3), (-2,1,-3), (1,-1,n); \; n\in \Z\}.\]
Every integral solution  $(k_1,k_2,k_3)$ provides a fibered link $K$,
with prescribed monodromy.  We retrieve the description of 3-component fibered links of genus zero obtained by Rogers (see \cite{Rog}). 

In order to expand the local Milnor fibration we first can adjoin 
2-disks on all boundary components of $F_0$, so that the associated generic fiber is a 
$F=S^2$. For those integral solutions where two coordinates have opposite values, we can 
also adjoin an annulus and further one 2-disk along the remaining component $F_0$ so that the generic fiber is a torus.  These solutions correspond to: 
\[(k_1,k_2,k_3)\in\{(1,-1,n); \, n\in \Z\}.\] 
Eventually, if $n=0$, we could also replace the 2-disk by any surface with boundary. 
Note that the corresponding link is the pretzel link 
$(2,-2,2n)$ with the natural orientation induced from its Seifert surface.
Expanding the local Milnor fiber to $F$ yields  a disk block 
$X\to D^2$ whose boundary fibration is trivial. 
The corresponding closed manifold $M^4$ is obtained by gluing together 
this disk block with $F\times D^2$ fiberwise. 

\begin{remark}
Specifying a fibered link alone  (without the orientation) is not enough for describing the singularity, even in the case of pretzel links. For instance, the pretzel link $(2,-2,2)$ has an open book decomposition  which is different  from that given above. As the closure of a full twist on 3 strands, which is a homogeneous braid, we can use Stalling's construction to describe an open book decomposition with a 3-holed torus as fiber. However, the orientations induced 
by the two fibrations above on the pretzel link are different.     When capping off the boundary components of the 3-holed torus by disks we obtain a torus singular fibration corresponding to a type IV singularity in the Kodaira classification.      
\end{remark}

\subsection{Singular fibrations of $S^4$}\label{matsum}
Recall that $\varphi(M,N)$ denotes the minimal number of critical points
of a smooth map between the manifolds $M$ and $N$. 
To obtain upper bounds for $\varphi$  we need specific constructions of 
maps with finitely many critical points. 
Note that there is a  smooth map  $S^4\to S^2$ with 2 critical points. 
Let $H:S^4\to S^3$ be the suspension of the Hopf fibration and
$h:S^3\to S^2$ be the Hopf fibration. The composition
$h\circ H:S^4\to S^2$ is the Matsumoto singular fibration by tori (see \cite{Mat0,Mat}).
According to the position of the two critical values of $H$ with respect to
the fibers of $h$, we have two cases:
\begin{enumerate}
\item critical values belong to the same fiber; then $ h\circ H$ has one critical value;
\item critical values belong to distinct fibers and thus  $ h\circ H$ has two critical values.
\end{enumerate}
In both cases we have a smooth map with 2 critical points.
The generic fiber is a torus and the critical fibers correspond to
\begin{enumerate}
\item the union of two 2-spheres intersecting transversely
at two points with opposite orientation, called 
the Montesinos twin singularity in \cite{Mat}; 
\item two immersed spheres with transversal intersections, with one having
 self-intersection +1 and the other one -1.
\end{enumerate}

It follows that $h\circ H$ is an achiral Lefschetz fibration of $S^4$ over $S^2$. 

Note that the $\varphi(M^4, S^2)$ can be strictly smaller than the 
number of critical points in an achiral Lefschetz fibration. The simplest
example is the following one, which is due to Matsumoto (\cite{Mat2}): 
\begin{proposition}\label{matsumoto}
We have $\varphi(S^4,S^2)=1$, while the number of critical points in an achiral Lefschetz 
fibration of $S^4$ over $S^2$ is $2$. 
\end{proposition}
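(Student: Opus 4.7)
The plan is to split the assertion into three steps: exhibit a smooth map $S^4\to S^2$ with exactly one critical point (giving $\varphi(S^4,S^2)\leq 1$); show that no smooth map $S^4\to S^2$ is critical-point-free (giving $\varphi(S^4,S^2)\geq 1$); and prove that every achiral Lefschetz fibration of $S^4$ over $S^2$ has at least two critical points, a minimum already attained by $h\circ H$.

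For the first step I would invoke the construction of Matsumoto in \cite{Mat2}: since $S^4$ admits a handlebody decomposition with no $1$- or $3$-handles, the general existence result recalled in the introduction produces a smooth torus singular fibration $S^4\to S^2$ with exactly one critical point. Following the cut-and-paste recipe of Section~\ref{cutpaste}, one writes $S^4$ as the union of a disk block $X^4\to D^2$ carrying a single cone-like singularity modelled on $\psi_{\mathcal E}$ (see Section~\ref{fiberedlinks}) and a trivial torus bundle $T^2\times D^2$, matched along a fibered link $K\subset S^3$ whose Milnor fibre is a proper subsurface of $T^2$ with open-book monodromy becoming trivial in $M(T^2)$ after capping off, so that the two pieces glue smoothly.

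For the second step I would argue by contradiction. If $p:S^4\to S^2$ is a submersion, Ehresmann's theorem makes it a smooth fibre bundle with closed connected surface fibre $F$, and multiplicativity of the Euler characteristic yields $\chi(F)=\chi(S^4)/\chi(S^2)=1$, which is odd, so $F$ is non-orientable and $w_1(TF)\neq 0$. On the other hand, the first Stiefel--Whitney class of the vertical tangent bundle $T^v S^4$ lies in $H^1(S^4;\Z/2)=0$ and restricts on each fibre to $w_1(TF)$, a contradiction.

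For the third step, let $f:S^4\to S^2$ be an achiral Lefschetz fibration with generic fibre of genus $g$ and $n$ critical points. Each singular fibre differs from the generic one by collapsing an embedded circle, so its Euler characteristic exceeds that of the generic fibre by $1$, giving $\chi(S^4)=\chi(S^2)\chi(F)+n$, that is $2=4-4g+n$, whence $n=4g-2\geq 2$. Equality is achieved by $h\circ H$ with a torus generic fibre, where the two critical points carry opposite signs. The main obstacle will be step one, where verifying the existence of a single cone-like critical point that glues to a trivial torus bundle over the complementary hemisphere requires producing an explicit fibered link in $S^3$ whose monodromy becomes trivial in the torus mapping class group after its Milnor fibre is embedded in $T^2$, which is the content of Matsumoto's open-book analysis in \cite{Mat2}.
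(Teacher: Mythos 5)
Your three-part structure is sound and each step is logically valid, but you take genuinely different routes from the paper for the two lower-bound claims, and you outsource the construction in step one.

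For the upper bound $\varphi(S^4,S^2)\leq 1$, you defer entirely to Matsumoto's \cite{Mat2}, noting (correctly) that the crux is producing an explicit fibered link whose Milnor fiber expands inside a torus with trivial global monodromy. The paper actually carries this out: the local model is the mixed polynomial $f(z_1,z_2)=z_1z_2(\bar z_1+\bar z_2)$, whose link is the pretzel link $(2,-2,2)$ with a pair-of-pants Milnor fiber $\Sigma_{0,3}$ and monodromy $T_{C_0}T_{K_0}^{-1}T_{K_1}$; capping one boundary component by a disk and joining the other two by an annulus gives a torus, and a short handle-calculus argument identifies the resulting closed 4-manifold with $S^4$. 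So your step one is a citation where the paper has a proof; you have signalled this honestly.

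For $\varphi(S^4,S^2)\geq 1$ you argue via multiplicativity of the Euler characteristic forcing a fiber of Euler characteristic $1$, hence $\mathbb{RP}^2$, which is ruled out by $w_1$ of the vertical tangent bundle landing in $H^1(S^4;\Z/2)=0$. The paper simply cites the non-existence of a Serre fibration $S^4\to S^2$. Both arguments are correct and elementary; yours is self-contained. (One could also run the homotopy long exact sequence, which shows $\pi_1(F)\cong\Z$, impossible for a closed surface.)

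For the claim that any achiral Lefschetz fibration of $S^4$ over $S^2$ has at least two critical points, your Euler-characteristic count $2=\chi(S^2)\chi(F)+n$ gives $n=4g-2$; since $n\geq 0$ forces $g\geq 1$, you get $n\geq 2$, and in fact $n\equiv 2\pmod 4$. The paper instead observes that if $n\leq 1$ then, after possibly reversing the orientation of $S^4$, the fibration becomes a genuine Lefschetz fibration, hence induces an almost-complex structure, contradicting the Noether formula ($b_1-b_2^+$ must be odd, but for $S^4$ it is $0$). Your argument is shorter and yields the mod-$4$ congruence as a bonus; the paper's argument additionally shows $S^4$ admits no (non-achiral) Lefschetz fibration at all, which is information the Euler characteristic count does not see. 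Both approaches are valid and both close with $h\circ H$ realizing $n=2$.
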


\begin{proof}
If $S^4$ had an achiral Lefschetz fibration with at most one critical point, 
then up to a change of orientation on $S^4$ we may suppose that the 
critical fiber has positive self-intersection. Thus $S^4$ suitably oriented would have a 
Lefschetz fibration over $S^2$, which is well-known to be false. For instance, 
a Lefschetz fibration induces an almost-complex structure (see \cite{GS}, Ex. 8.1.6) and in particular 
$b_1-b_2^+$ should be odd, by the Noether formula for almost-complex structures (see \cite{GS}, Thm. 1.4.13) contradicting the vanishing of the first two Betti numbers of $S^4$.

The inequality $\varphi(S^4,S^2)\leq 1$  follows from a
construction due to Matsumoto (\cite{Mat2}) of 
generalized torus fibrations $S^4\to S^2$ with a single singular point. 
Note that locally this map is equivalent to the germ of the isolated singularity 
$(\R^4,0)\to (\R^2,0)$ expressed in complex coordinates by the mixed polynomial: 
\[ f(z_1,z_2)=z_1z_2(\overline{z}_1+\overline{z}_2).\]
The local Milnor fiber is a pair of pants $F_0=\Sigma_{0,3}$  and its link $L$ is the pretzel link 
$(2,-2,2)$. If the link components are respectively denoted $C_0,K_0$ and $K_1$, then the 
monodromy of the corresponding open book structure is written in $M(\Sigma_{0,3})$ as the 
product of the Dehn twists:  
\[ T_{C_0}T_{K_0}^{-1}T_{K_1}.\]
We consider the expansion of $F_0\subset T$ to a torus $T$ by adjoining the 
surface $Z=D^2\cup A$, where $A= S^1\times [0,1]$, in such a way that the component $K_1$ is capped off by a disk and the components $C_0$ and $K_0$ by an annulus $A$. One obtains then a 4-manifold with boundary 
$X=D^4\cup_{N(L)} Z\times D^2$ endowed with a smooth map $f:X\to D^2$. 
Note that the monodromy of the boundary fibration $f|_{\partial X}: \partial X\to S^1$ is trivial 
and hence the boundary fibration extends over  $D^2$  providing a singular fibration 
$X\cup_{S^1\times T} D^2\times T\to S^2$ with a single critical point. 
It remains to see that $M(f)=X\cup_{S^1\times T} D^2\times F$ is diffeomorphic to $S^4$. 

To this purpose note that $S^4$ is obtained by adding to $D^4$ first a 2-handle over an unknot 
with zero framing a cancelling 3-handle and then a 4-handle. 
After adding the 2-handle we obtain $D^2\times S^2$.  
The union of the complementary 3-handle and the 4-handle is diffeomorphic to $D^3\times S^1$.

Let $F_1$ be the result of expanding $F_0$ to an annulus by adjoining  $D^2$ 
along the component $K_1$ and $Y=D^4\cup_{N(K_1)} (F_1-F_0)\times D^2$. Then 
$Y$ is the result of adding a 2-handle along the $K_1$ with zero framing. 
Remark that $K_1$ is an unknot in $S^3$ and hence $Y$ is diffeomorphic to  $D^2\times S^2$. 
Moreover, the monodromy of the boundary fibration
$\partial Y\setminus N(K_0\cup C_0)\to S^1$  
by annuli is trivial.  On the other hand we obtain  $M(f)$ from $Y$ by 
gluing back first  $Z-{\rm int}(Y)=D^2\times A$ which is a trivial fibration in annuli over $D^2$ and 
second the torus product $D^2\times T$. The union of these two pieces is
\[ D^2\times T^2\cup_{A\times \partial D^2} A\times D^2=
(D^2\times S^1\cup_{\partial D^2\times [0,1]} [0,1]\times D^2)\times S^1.\]
Note that  $[0,1]\times D^2$ is half of a solid torus $S^1\times D^2$, which 
is glued together to the solid torus $D^2\times S^1$ by identifying meridians to longitudes. 
Thus 
\[ D^2\times S^1\cup_{\partial D^2\times [0,1]} [0,1]\times D^2=S^3\setminus [0,1]\times D^2=D^3.\] 
It follows that $M(f)$ is obtained from  $S^2\times D^2$ by adjoining $D^3\times S^1$ and 
thus it is diffeomorphic to  $S^4$, as claimed.

The opposite inequality $\varphi(S^4,S^2) \geq 1$ is a consequence of the 
well-known fact that there is no Serre fibration 
$f:S^4\to S^2$ (see e.g. \cite{AndFun1}). 
\end{proof}

\section{Homotopical obstructions to singular fibring}
\subsection{The index of a 2-plane field after Hirzebruch, Hopf and Rudolph}
We have an action of $S^3\times S^3$ on $\R^4$ given by 
\[ (q_1,q_2)(x)=q_1\cdot x\cdot q_2^{-1}, \; x\in \R^4, \, (q_1,q_2)\in S^3\times S^3, \]
where $\R^4$ is identified with the field of quaternions, $S^3$ 
with the subgroup of unit quaternions and $\cdot$ denotes the multiplication of quaternions. 
This provides an exact sequence: 
\[ 1\to \Z/2\Z=\langle (-1,-1)\rangle \to S^3\times S^3\stackrel{Q}{\to} SO(4)\to 1\]
identifying  $S^3\times S^3$ with the universal covering of $SO(4)$, namely the ${\rm Spin}(4)$ group. 
If $S^1\subset S^3$ denotes the subgroup of unit complex numbers, then 
\[ Q(S^1\times S^1)=SO(2)\times SO(2)\subset SO(4).\]
Now $\R^4$ with the complex structure induced by the multiplication by $i$ is identified 
with $\C^2$ and the unitary matrices form a subgroup $U(2)\subset SO(4)$. 
The above description yields $Q(S^1\times S^3)=U(2)$. On the other hand the image 
$Q(S^3\times S^1)=U(2)'$ is another copy of $U(2)$ embedded in $SO(4)$. 
Obviously $U(2)\cap U(2)'=SO(2)\times SO(2)$.

The quotient of the $S^1$-action on  $S^3$ by left multiplication of quaternions 
coincides with the sphere $S^2$, as the base 
space of the Hopf fibration. It follows that 
\[ SO(4)/U(2)=Q(S^3\times S^3)/Q(S^1\times S^3)=S^3/S^1\cong S^2\]
and in a similar way
\[ SO(4)/U(2)'=Q(S^3\times S^3)/Q(S^3\times S^1)=S^3/S^1\cong S^2.\]

The Grassmann variety $\tilde G_2(\R^4)$ of oriented 2-planes in $\R^4$ is 
$SO(4)/SO(2)\times SO(2)$. By direct inspection the natural map 
\[ \tilde G_2(\R^4)\to SO(4)/U(2)\times SO(4)/U(2)'\cong S^2\times S^2\]
is a canonical diffeomorphism. Fixing an orientation on $S^3$ we obtain a 
canonical isomorphism $\pi_3(S^3)\to \Z$, given by the degree. Using the 
Hopf map derived from the left multiplication above we derive a canonical isomorphism 
$\pi_3(S^2)\to \Z$. Then the diffeomorphism above induces a canonical isomorphism 
\[\pi_3(\tilde G_2(\R^4))\to \Z\oplus \Z.\]

Consider now a smooth oriented 4-manifold $M$ and let $\xi$ be a  2-plane field over $M$ with 
finitely many isolated singularities at $P=\{p_1,p_2,\cdots,p_n\}$. 
The restriction of $\xi$ to a small sphere $S^3$  around a singularity $p\in P$ 
yields a map $S^3\to  \tilde G_2(\R^4)$. We denote by $(-\lambda_p,\rho_p)\in \Z\oplus \Z$ the class in 
$\pi_3(\tilde G_2(\R^4))$ of this restriction and call it the {\em index} of the 2-plane field at the singularity $p$ (see \cite{Thomas}).   
Moreover, the  (global) index $(-\lambda, \rho)$ of $\xi$ is the sum of the indices of all singularities. 

The main result of Hirzebruch and Hopf from \cite{HH} expresses the index of 
a 2-plane field in terms of homotopy invariants of the manifold. 
To this purpose we need to introduce more notation. Let $S$ be the intersection pairing 
defined on the quotient $H=H^2(M;\Z)/{\rm Tor}$ by its torsion subgroup. Recall that 
$w\in H$ is {\em characteristic} if $S(w,x)\equiv S(x,x) ({\rm mod}\; 2)$, for every $x\in H$.
Set $W$ be the set of characteristic elements of $H$ with respect to $S$ and 
\[ \Omega(M)=\{ S(w,w); w\in W\}\subset \Z.\]
When $H=0$ we put $\Omega(M)=0$. Note that 
\[ \Omega(M)\subseteq \sigma(M)+8\Z,\]
where $\sigma(M)$ denotes the signature of the intersection pairing $S$.
Let $e(M)$ denote the Euler characteristic of the manifold $M$.  
We can now state: 

\begin{theorem}[\cite{HH}, Satz 4.3]\label{Hirzebruch-Hopf}
The pair $(-\lambda,\rho)\in \Z\oplus \Z$ is the index of an oriented 2-plane field 
with isolated singularities on the closed oriented manifold $M^4$ if and only if 
there exist $\alpha,\beta\in \Omega(M)$ such that 
\[ 4\lambda=2 e(M)+3\sigma(M) -\alpha, \, 4\rho= 2e(M) -3\sigma(M)+\beta.\]
\end{theorem}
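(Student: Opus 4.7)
The plan is to exploit the canonical splitting $\tilde G_2(\R^4) \cong S^2 \times S^2$ established above to decompose a 2-plane field with isolated singularities into a pair of almost complex structures with isolated singularities, and then reduce the statement to characteristic class identities of Hirzebruch type. A 2-plane field $\xi$ on $M$ with singular set $P$ is a section of $\tilde G_2(TM)|_{M\setminus P}$, and post-composition with the two natural projections $\tilde G_2(\R^4) \to SO(4)/U(2) \cong S^2$ and $\tilde G_2(\R^4) \to SO(4)/U(2)' \cong S^2$ produces a pair of orthogonal almost complex structures $J_+, J_-$ on $M\setminus P$, compatible with the given orientation of $M$ and with its reverse respectively. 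Under the induced decomposition $\pi_3(\tilde G_2(\R^4)) = \pi_3(S^2)\oplus\pi_3(S^2) = \Z\oplus\Z$, the local index $(-\lambda_p,\rho_p)$ at each singularity is exactly the pair of local obstructions of $J_+, J_-$ to extending across $p$.

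For the necessity direction, each $J_\pm$ endows $TM|_{M\setminus P}$ with the structure of a rank-$2$ complex bundle and so defines $c_1(J_\pm)\in H^2(M\setminus P;\Z)$. Because $P$ is finite the restriction $H^2(M;\Z)\to H^2(M\setminus P;\Z)$ is an isomorphism modulo torsion, and the lift reduces to $w_2(M)\pmod 2$; hence $c_1(J_\pm)$ represents a characteristic element $w_\pm\in W$. Setting $\alpha = S(w_+,w_+)$ and $\beta = S(w_-,w_-)$ we have $\alpha,\beta\in\Omega(M)$. The Hirzebruch identity $p_1(TM)=c_1^2-2c_2$ combined with the signature theorem $\langle p_1(TM),[M]\rangle = 3\sigma(M)$ gives
\[ \langle c_1(J_\pm)^2,[M]\rangle = 2\langle c_2(J_\pm),[M]\rangle + 3\sigma(M). \]
For a globally defined almost complex structure one has $\langle c_2(J_+),[M]\rangle = e(M)$ and $\langle c_2(J_-),[M]\rangle = -e(M)$ (because $J_-$ reverses the orientation of $TM$); in the presence of isolated singularities, a Poincar\'e--Hopf computation evaluates the defect on each Milnor ball and yields $\langle c_2(J_+),[M]\rangle = e(M)-2\lambda$ and $\langle c_2(J_-),[M]\rangle = -e(M)+2\rho$. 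Substituting produces $\alpha = 2e(M)+3\sigma(M)-4\lambda$ and $\beta = -2e(M)+3\sigma(M)+4\rho$, which rearrange to the stated formulas.

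For the sufficiency direction, given $\alpha,\beta\in\Omega(M)$ with characteristic witnesses $w_\pm\in W$, I would construct $J_+$ and $J_-$ independently by obstruction theory applied to the fiber bundle with fiber $SO(4)/U(2)\cong S^2$: the primary obstruction lies in $H^3(M;\pi_2(S^2))=H^3(M;\Z)$ and is killed by choosing $c_1 := w_\pm$ lifting $w_2(M)$; the remaining obstruction in $H^4(M;\pi_3(S^2))=\Z$ equals $\tfrac14(S(w_\pm,w_\pm)-2e(M)\mp 3\sigma(M))$ by the identity above, and this integer can be realized by concentrating the $\pi_3(S^2)$-defect at a single isolated singular point via a local model $S^3\to S^2$ of the required degree. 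Pairing $J_+$ and $J_-$ then reconstitutes a $2$-plane field $\xi$ on $M$ whose total index is the prescribed pair $(-\lambda,\rho)$. The principal technical obstacle is the local identity $\langle c_2(J_\pm),[M]\rangle = \pm e(M)\mp 2\lambda_{\!/\!\rho}$: this requires a careful Chern--Weil and Poincar\'e--Hopf computation on the pair $(D^4,S^3)$ around each singular point, using the transgression in the fibration $U(2)\to SO(4)\to S^2$, with sign conventions that distinguish the two $S^2$-factors of $\tilde G_2(\R^4)$; this is what ultimately accounts for the sign reversal of $3\sigma(M)$ between the formulas for $\alpha$ and $\beta$.
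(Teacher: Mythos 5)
The paper does not supply a proof of Theorem~\ref{Hirzebruch-Hopf}; it is quoted verbatim from Hirzebruch--Hopf~\cite{HH}, Satz 4.3, and the nearby Lemma~\ref{classes} is \emph{deduced from} it, not the other way around. So there is no in-text argument to compare against, and your reconstruction has to stand on its own.

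Your skeleton matches the original Hirzebruch--Hopf strategy, and its arithmetic is internally consistent: decompose via $\tilde G_2(\R^4)\cong S^2\times S^2$ into the two almost complex structures $J_\pm$ on $M\setminus P$ (this is exactly the paper's $J$ and $J'$); observe $c_1(J_\pm)\equiv w_2(M)\bmod 2$, so they are characteristic; combine $p_1=c_1^2-2c_2$ with $\langle p_1,[M]\rangle=3\sigma(M)$; and keep track of the orientation reversal for $J_-$ that flips the sign of $3\sigma$. All of that is correct and is the right frame. The genuine gap is the step you yourself flag: the identity
\[
\langle c_2(J_+),[M]\rangle=e(M)-2\lambda,\qquad \langle c_2(J_-),[M]\rangle=-e(M)+2\rho.
\]
This is the entire content of the theorem. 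The factor of $2$ is equivalent to computing the degree of the natural map $\pi_3\bigl(SO(4)/U(2)\bigr)\to\Z$ that assigns to a local clutching datum the second Chern number of the resulting $\C^2$-bundle over $S^4$, and one must also fix the sign and the identification of $\pi_3(S^2)$ with $\Z$ compatible with the paper's normalization of the index as $(-\lambda_p,\rho_p)$. Writing ``a Poincar\'e--Hopf computation yields'' is not a derivation: if one does not already know the answer, nothing in the sketch pins down the coefficient (why $2$ and not $1$, or $4$?), and one must genuinely work through the transgression in the fibration $U(2)\to SO(4)\to SO(4)/U(2)$, or equivalently analyze the exact sequence $\pi_3(U(2))\to\pi_3(SO(4))\to\pi_3(S^2)\to 0$ and its interaction with $c_2$ and $e$. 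Without that, the argument is circular, since the final formula is precisely where the coefficient would otherwise come from. The sufficiency half has a parallel issue: the claim that the top obstruction in $H^4(M;\pi_3(S^2))\cong\Z$ equals $\tfrac14\bigl(S(w,w)-2e(M)\mp3\sigma(M)\bigr)$ is again the same coefficient computation in disguise, so the two halves of the proof are not independent checks on each other. In short: right decomposition, right identities, but the quantitative core is asserted rather than established.
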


Consider now a singular fibration $f:M^4\to N^2$ over some closed orientable surface $N$. 
We can associate to $f$ the oriented 2-plane field $\xi=\ker Df$. 
The index of this 2-plane field at a critical point $p$ of $f$ coincides then with 
the pair $(-\lambda(f)_p, \rho(f)_p)$, where $\lambda(f)_p, \rho(f)_p$ are the invariants associated by Rudolph in \cite[Definition 1.4]{Rud} and called enhanced Milnor numbers there. Note the change in the sign of the first component $\lambda$, with respect to the convention in \cite{HH} needed in order to follow \cite{Rud}. 

Furthermore let us denote by $\mu(f)_p$ the Milnor number of the local germ of $f$ at $p$, namely the 
first Betti number of the local Milnor fiber. We will use next the following result of 
Rudolph: 

\begin{theorem}[\cite{Rud}, Corollary 2.6]\label{Rudolph}
If the germ $f:\R^4\to \R^2$ has an isolated singularity at $p$, then we have: 
\[ \mu(f)_p=\lambda(f)_p+\rho(f)_p.\]
\end{theorem}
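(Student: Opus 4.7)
The plan is to translate both sides of the identity into invariants of the local Milnor fibration and compare them term by term.

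First, I would restrict attention to a small Milnor ball $B^4$ centered at $p$ with boundary sphere $S^3$, containing the local link $K = f^{-1}(0)\cap S^3$. On the complement $S^3\setminus K$, the plane field $\xi=\ker Df$ is exactly the tangent plane field of the Milnor fibers of the open book fibration $f_K\colon S^3\setminus K\to S^1$. Since $K$ carries a canonical trivialization induced by the open book structure, $\xi$ extends across $K$ to a globally defined map $\hat\xi\colon S^3\to \tilde G_2(\R^4)$. By the very definition of the index, the homotopy class of $\hat\xi$ in $\pi_3(\tilde G_2(\R^4))\cong \Z\oplus\Z$ is $(-\lambda(f)_p,\rho(f)_p)$.

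Second, I would exploit the canonical splitting $\tilde G_2(\R^4)\cong S^2\times S^2$ established in the preceding discussion to decompose $\hat\xi$ into two maps $\hat\xi_\pm\colon S^3\to S^2$ whose Hopf invariants are the two components of the index. Applying the Pontryagin construction to each factor and choosing regular values adapted to the open book, the Hopf invariants $-\lambda(f)_p$ and $\rho(f)_p$ become linking numbers of parallel push-offs of the Milnor fiber surface $F\subset S^3$ along the two natural framings supplied by the $S^2\times S^2$ splitting.

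Third, the desired identity reduces to the classical Seifert self-linking formula: the sum of the two push-off self-linking numbers of $F$ equals $1-\chi(F)=\mu(f)_p$. This follows because the Euler class of the plane bundle normal to $F$, measured by comparing the two opposing push-offs, counts precisely the first Betti number of $F$, while the contribution of $\chi(F)$ to each push-off cancels in the sum.

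The main obstacle is reconciling the normalizations: matching Rudolph's sign conventions for $\lambda$ and $\rho$ from \cite{Rud} with the Hirzebruch--Hopf indexing of $\pi_3(\tilde G_2(\R^4))$, and confirming that the Pontryagin preimages under the two factor projections do yield precisely the two opposing push-offs of $F$ rather than some other pair of parallel framings of $K$. Once these conventions are pinned down, the identity $\mu(f)_p = \lambda(f)_p + \rho(f)_p$ follows directly from the self-linking computation, consistent with the benchmark values $\lambda=1,\rho=0$ for the positive Hopf link and $\lambda=0,\rho=1$ for its mirror, both of which have $\mu=1$.
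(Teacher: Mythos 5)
The paper does not prove this statement; it quotes it verbatim as \cite{Rud}, Corollary 2.6, so there is no in-house argument for your proposal to be measured against. Evaluating the proposal on its own terms, I see a genuine gap already in your first step. You assert that on $S^3\setminus K$ the field $\xi=\ker Df$ ``is exactly the tangent plane field of the Milnor fibers of the open book fibration $f_K$.'' This is not so: at a point $x\in S^3\setminus K$ the plane $\ker D_xf$ is tangent to the \emph{ambient} level set $f^{-1}(f(x))\subset\R^4$, which is transverse to $S^3$, whereas the page of the open book through $x$ is the surface $\{y\in S^3: f(y)/|f(y)|=f(x)/|f(x)|\}\subset S^3$, whose tangent plane lies in $T_xS^3$. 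The two planes both sit inside the 3-plane $\ker D_x(f/|f|)$, and they intersect only in a line; they are different sections of $\tilde G_2(\R^4)$. In fact, a plane field tangent to $S^3$ lands in the diagonal $S^2\subset S^2\times S^2$ after the identification of the Grassmannian made in the paper, and would therefore force $-\lambda_p=\rho_p$, which is false already for the positive Hopf link. So even if a relation between the two fields can be exploited, it cannot be the literal identification your argument rests on.

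The later steps are also too loose to carry the weight you put on them. In Step 3 you invoke ``the Euler class of the plane bundle normal to $F$,'' but the normal bundle of a Seifert surface $F\subset S^3$ is a trivial line bundle, so it is unclear which plane bundle and which Euler class you intend; and you speak of ``self-linking numbers of $F$'' where $F$ is a surface, not a knot, so the objects being linked need to be made precise (you likely want push-offs of $K=\partial F$ with particular framings produced by the two $S^2$-factors). Finally, the asserted ``classical Seifert self-linking formula'' equating the sum of two framing corrections to $1-\chi(F)$ is not a standard statement I can recognize; it needs a precise formulation and a proof, and it is exactly here that the substance of Rudolph's Corollary 2.6 lives. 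Your normalization check at the end (Hopf links: $\lambda=1,\rho=0$ vs.\ $\lambda=0,\rho=1$, both with $\mu=1$) is a useful sanity test, but a correct answer in one example does not validate the intermediate steps. To repair the argument you would need to (i) replace the false identification in Step 1 by an explicit homotopy between $\ker Df|_{S^3}$ and a plane field built from the open book (tangent to pages away from the binding, rotated appropriately near $K$), and (ii) carefully carry out the Pontryagin--Thom computation for each $S^2$-factor, identifying the preimage framed links and their self-linking numbers in terms of $\chi(F)$; only then will the claimed $1-\chi(F)$ appear honestly.
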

 
We denote by $\mu(f)$ the sum of local Milnor numbers $\mu(f)_p$ over all singularities 
$p$ of $f$.  
 
 \subsection{Almost complex structures}
Consider an oriented 2-plane field $\tau$ over the smooth oriented 4-manifold $M^4$
having only finitely many singularities $P$. 
Then there exist two almost complex structures on $M^*=M-P$ 
canonically associated with $\tau$, which we call 
$J$ and $J'$ such that $J$ is compatible with the given orientation of $M$ and $J'$ is 
compatible with the opposite orientation of $M$. 
Assume that $M$ is given a Riemannian metric. 
Then the tangent bundle $TM$ splits  as $\tau\oplus \tau^{\perp}$ and hence 
the structure group reduces from $SO(4)$ to $SO(2)\times SO(2)$. 
Note that conversely, if the tangent bundle has a $SO(2)\times SO(2)$ bundle structure 
then it has an associated  oriented 2-plane field. 
Now, the almost complex structure $J$ is the one obtained when both $\tau$ and $\tau^{\perp}$ 
are identified with complex line bundles over $M$. Moreover $J'$ is the almost complex structure 
where $\tau$ is a complex line bundle over $M$ and $\tau^{\perp}$ is the conjugate of the previous complex line bundle.

Observe that $SO(4)/U(2)$ is the space of almost complex structures compatible with 
the orientation on $\R^4$, while  $SO(4)/U(2)'$ is the space of almost complex structures compatible with the opposite orientation on $\R^4$. 

Furthermore, consider a germ $f:\R^4\to \R^2$  with an isolated singularity at $p$. 
The associated oriented 2-plane field $\xi=\ker Df$ provides then two almost complex structures 
$J(f)$ and $J'(f)$ in $U-p$, where $U$  is an open neighborhood of the singular point $p$
not containing other singularities. The following seems to be well-known:

\begin{lemma}
Let $f:\R^4\to \R^2$ be a smooth germ with an isolated singularity at $p$. 
Then $\lambda(f)_p$ vanishes if and only if the induced almost complex structure 
$J(f)$ extends over the singularity $p$. 
Likewise, $\rho(f)_p$ vanishes if and only if the induced almost complex structure 
$J'(f)$ extends over the singularity $p$. 
\end{lemma}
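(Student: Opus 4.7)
The plan is to identify the pair $(-\lambda(f)_p,\rho(f)_p)\in\Z\oplus\Z$ with the pair of Hopf invariants, in $\pi_3(S^2)\oplus\pi_3(S^2)$, of the restrictions $J(f)|_{S^3}$ and $J'(f)|_{S^3}$ to a small $3$-sphere bounding a ball around $p$ that contains no other singularity. Once this is done, the extension question reduces to the classical fact that a map $S^3\to S^2$ extends over the bounded $4$-ball if and only if its homotopy class in $\pi_3(S^2)$ vanishes.

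First I would unpack the canonical diffeomorphism $\tilde G_2(\R^4)\cong SO(4)/U(2)\times SO(4)/U(2)'$ recalled above. The first projection $\tilde G_2(\R^4)\to SO(4)/U(2)$ sends an oriented $2$-plane $\pi\subset\R^4$ to the unique orientation-compatible almost complex structure $J_\pi$ for which $\pi$ is a complex line carrying the given orientation; this is well-defined because the stabilizer $SO(2)\times SO(2)$ of an oriented plane sits inside $U(2)$, as spelled out in the excerpt. This is precisely the construction of $J(f)$ reviewed just before the statement of the lemma, applied pointwise to $\xi=\ker Df$. Consequently, composing $\xi|_{S^3}\colon S^3\to\tilde G_2(\R^4)$ with the first projection recovers $J(f)|_{S^3}\colon S^3\to S^2$, whose homotopy class is the first component of the index, namely $-\lambda(f)_p$. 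A parallel discussion with $SO(4)/U(2)'$, in which $\xi^\perp$ is given the conjugate complex structure, identifies the second component $\rho(f)_p$ with the class of $J'(f)|_{S^3}$.

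To conclude I would let $B$ be a small closed $4$-ball around $p$ missing every other singularity, and observe that since $B$ is contractible, $TB$ is trivializable, and the bundle of orientation-compatible almost complex structures on $TB$ is trivial with fiber $SO(4)/U(2)\cong S^2$. After a choice of trivialization, $J(f)$ becomes a continuous section of this trivial bundle over $B\setminus\{p\}$, i.e.\ a map $B\setminus\{p\}\to S^2$ whose restriction to $\partial B=S^3$ has homotopy class $-\lambda(f)_p$. Since $B\setminus\{p\}$ deformation retracts to $S^3$, such a map extends continuously over $p$ if and only if this class is zero, that is, iff $\lambda(f)_p=0$. The same obstruction argument applied to $J'(f)$ and the other factor $SO(4)/U(2)'$ yields the other half of the lemma.

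The main obstacle is the first identification: verifying that the natural quotient map to a homogeneous space of complex structures coincides with the ad hoc construction of $J(f)$ and $J'(f)$ from the splitting $TM=\xi\oplus\xi^\perp$, with the correct handling of orientations so that the two Hopf invariants match the two components of the index in the sign convention used above. After this is pinned down, the rest is standard obstruction theory; there is no ambiguity from the choice of trivialization of $TB$, since $B$ is contractible and $S^2$ is connected, so the homotopy class of the boundary map is intrinsic.
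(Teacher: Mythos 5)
Your proposal is correct and follows essentially the same route as the paper's proof: both identify $\lambda(f)_p$ (respectively $\rho(f)_p$) with the homotopy class of $J(f)|_{S^3}$ (respectively $J'(f)|_{S^3}$) in $\pi_3(SO(4)/U(2))\cong\pi_3(S^2)$, and then invoke the standard fact that a map $S^3\to S^2$ extends over the bounding $4$-ball precisely when it is null-homotopic. Your write-up is somewhat more explicit about the identification between the two projection factors of $\tilde G_2(\R^4)\cong SO(4)/U(2)\times SO(4)/U(2)'$ and the pointwise definitions of $J(f)$, $J'(f)$ (and is more careful about the sign in $(-\lambda_p,\rho_p)$), but there is no substantive difference in method.
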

\begin{proof}
Note that $\lambda(f)_p$ is the homotopy class of the map 
$S^3\to SO(4)/U(2)$ which associates to a point $x$ of the 3-sphere around $p$ 
the class of $J(f)$ at the point $x$. This map is null-homotopic if and only if 
it extends to the 4-disk, namely it extends over $p$. The proof is similar for 
$\rho(f)_p$. 
\end{proof}

Let $f:M^4\to N^2$ be a singular fibration of a closed oriented 4-manifold $M$ over a closed oriented surface $N^2$. 
Then we have two almost complex structures $J(f)$ and $J'(f)$ on $M^*=M-P$, where 
$P$ is the set of singular points of $f$.  
Let then $c_1(J(f))\in H^2(M^*)$ and $c_1(J'(f))\in H^2(M^*)$ be the first Chern classes 
of these two almost complex structures.  We denote by the same letters 
their inverse images in $H^2(M)$ under the isomorphism induced by the inclusion $H^2(M)\to H^2(M^*)$. 
Eventually let $c_1(J(f))^2, c_1(J'(f))^2\in \Z$ be the images of their squares 
into $\Z$ by the isomorphism $H^4(M)\cong \Z$.

\begin{lemma}\label{classes}
If  $(-\lambda,\rho)\in \Z\oplus \Z$ is the index of the oriented 2-plane field 
with isolated singularities associated to the singular fibration $f$, then  $c_1(J(f))$ and  $c_1(J'(f))$ are characteristic classes in $H$ and 
\[ c_1(J(f))^2=2 e(M)+3\sigma(M) - 4\lambda, \, c_1(J'(f))^2= 4\rho- 2e(M) +3\sigma(M)\]
\end{lemma}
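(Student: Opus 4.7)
The plan is to split the proof into two stages: well-definedness and characteristicity of $c_1(J(f))$ and $c_1(J'(f))$ as classes in $H^2(M;\Z)$, and then the two formulas for their self-intersections. The first stage is largely formal; the second reduces to identifying the characteristic classes appearing in the proof of Theorem \ref{Hirzebruch-Hopf} with the Chern classes of the almost complex structures built from $\xi = \ker Df$.

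For the first stage, I would observe that $P \subset M$ is finite, so excision and the vanishing of local cohomology $H^k_p(M;A) = 0$ for $k \leq 3$ yield isomorphisms $H^k(M;A) \cong H^k(M^*;A)$ for $k \leq 2$ and any abelian group $A$. Hence $c_1(J(f)), c_1(J'(f)) \in H^2(M^*;\Z)$ lift uniquely to elements of $H^2(M;\Z)$, as stated. For characteristicity, I would invoke the classical fact that $c_1(J) \equiv w_2(E_{\R}) \pmod{2}$ for every almost complex structure $J$ on a real rank-$2n$ bundle $E$. Applied to $(TM|_{M^*}, J(f))$ and $(TM|_{M^*}, J'(f))$, and using that $w_2$ does not depend on orientation, this gives $c_1(J(f)), c_1(J'(f)) \equiv w_2(TM)|_{M^*} \pmod{2}$. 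The same isomorphism with $\Z/2\Z$-coefficients promotes these congruences to $c_1(J(f)), c_1(J'(f)) \equiv w_2(M)$ in $H^2(M;\Z/2\Z)$. Since Wu's formula gives $w_2(M) \smile x = x \smile x$ in $H^4(M;\Z/2\Z)$ for $x \in H^2(M;\Z/2\Z)$, both classes are characteristic with respect to $S$.

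For the formulas, I would show $c_1(J(f))^2 = \alpha$ and $c_1(J'(f))^2 = \beta$ for the $\alpha, \beta \in \Omega(M)$ produced by Theorem \ref{Hirzebruch-Hopf}. The crucial point is that Hirzebruch--Hopf's construction of the characteristic elements $w, w'$ with $\alpha = S(w,w)$, $\beta = S(w',w')$ uses precisely the two $U(2)$- and $U(2)'$-reductions of $TM|_{M^*}$ coming from the decomposition $\xi \oplus \xi^{\perp}$ discussed in the excerpt: the reduction compatible with the orientation of $M$ yields $w = c_1(J(f))$, while the reduction compatible with the opposite orientation yields $w' = c_1(J'(f))$. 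Substituting these identifications into the formulas of the theorem yields the lemma. The main obstacle is tracing this identification carefully through Hirzebruch--Hopf's proof, together with the orientation bookkeeping for $J'(f)$: since $J'(f)$ is compatible with $\overline{M}$, the ``standard'' almost-complex formula read there is $c_1(J'(f))^2[\overline{M}] = 2e(M) - 3\sigma(M) - 4\rho$, and evaluating on $[M] = -[\overline{M}]$ produces the sign on $3\sigma(M)$ asserted in the lemma. A self-contained alternative would be to extend $\xi$ and $\xi^{\perp}$ from $M^*$ to complex line bundles $\widetilde{L}_1, \widetilde{L}_2$ on $M$ (uniquely up to isomorphism by the $H^2$-isomorphism above), apply the Chern--Pontryagin identity $c_1^2 = p_1 + 2 c_2$ to $\widetilde{L}_1 \oplus \widetilde{L}_2$, and compute the local corrections at each $p \in P$ using the obstruction class in $\pi_3(SO(4)) = \Z \oplus \Z$; the coefficient $4$ in front of $\lambda$ and $\rho$ emerges from the way these generators contribute to $e$ and $p_1$.
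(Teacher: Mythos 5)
Your proposal is correct and follows essentially the same route as the paper's one-line proof, which simply appeals to \cite[Satz 3.3]{HH} for the identification of $\alpha$, $\beta$ with the self-intersections $c_1(J(f))^2$, $c_1(J'(f))^2$ of the Chern classes coming from the $U(2)$ and $U(2)'$ reductions of $TM|_{M^*}$. You are explicating that same identification, together with the routine excision isomorphism $H^2(M)\cong H^2(M^*)$ and the $c_1\equiv w_2 \pmod 2$ plus Wu-formula argument for characteristicity, all of which the paper leaves implicit.
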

\begin{proof}
This follows from the description of the integers $\alpha,\beta$ from theorem \ref{Hirzebruch-Hopf}, 
see \cite[Satz 3.3]{HH}. 
\end{proof}
 
We say that an isolated singularity $p$ of the smooth map $f:\R^4\to \R^2$ is {\em almost complex} or  {\em opposite almost complex} if $\lambda(f)_p=0$ and $\rho(f)_p=0$, respectively. 
This terminology extends to the fibered links occurring as local links of the 
corresponding isolated singularities. 
It was shown in \cite{Rud2} that the closure of a strictly positive braid is 
an almost complex link. In particular, this is so for all local links of complex 
holomorphic isolated singularities. 
Furthermore,  Rudolph proved  in \cite{Rud} that: 
\[ \lambda(\overline{K})=\rho(K), \, \rho(\overline{K})=\lambda(K)\]
where $\lambda(K), \rho(K)$ stands for $\lambda(\phi_K)_0, \rho(\phi_K)_0$ and 
$\overline{K}$ denotes the mirror image of $K$. In particular mirror images of 
almost complex links are opposite almost complex. 
Moreover, if the oriented fibered link $K$ is amphichiral, then 
$\lambda(K)=\rho(K)=\mu(f)/2$ and hence it is neither almost complex nor opposite almost complex, if it has nonzero genus. 

\begin{remark}
Assume that the homology class of the generic fiber $F$ of a singular fibration 
$f:M^4\to S^2$ is nonzero in $H_2(M^4)$.  
Then, there exists a symplectic structure on the open manifold obtained from $M^4$ by removing the fibers with $\lambda(f)_p\neq 0$, assuming at least one exist, by a theorem of Gromov (see \cite[section 4.1]{MS}).   
We don't know under which conditions the symplectic structure 
extends over the singular fiber $f^{-1}(p)$. 
\end{remark}

\subsection{The index of an oriented 2-plane over a 3-manifold}
Let $X^4$ be a compact oriented 4-manifold with boundary $\partial X^4=N^3$. 
Let us set $X^*=X^4\setminus \{p\}$, where $p$ is a point in the interior of $X^4$. 
If $D^4\subset X^4$ is a small 4-disk embedded in $X^4$, centered at $p$, 
let $M^4=X^4\setminus {\rm int}(D^4)$, which has boundary 
$\partial M^4=\partial X^4\sqcup \partial D^4=N^3\sqcup S^3$. 

Let $\xi$ be an oriented  2-plane field on the boundary $\partial X^4=N^3$ which is tangent to $X^4$. 
Suppose that there exists an extension $\tilde\xi$ of $\xi$ to an oriented tangent 
2-plane field on $X^4$ which has only  one singularity at the given point $p$.  
Let $\xi^{\circ}$ denote the restriction of $\tilde{\xi}$ to the 
sphere $S^3$ around $p$. 

The question addressed here is to what extent the homotopy type of 
the oriented 2-plane field $\xi^{\circ}$ is well-defined and independent on the choice of the extension $\tilde\xi$? Note that there is a canonical trivialization induced from $D^4\subset X^4$ on $S^3$. An oriented 2-plane field on $S^3$ with this trivialization corresponds to a map $S^3\to \tilde G_2(\R^4)$ and its homotopy class 
is described by the index $(-\lambda_p(\xi^{\circ}), \rho_p(\xi^{\circ}))\in \Z\oplus \Z$.

From now on we assume that $X^4$ has a spin structure. 
Then $X^4$  is almost parallelizable, namely 
$X^*$ is parallelizable. Fix once and for all a trivialization of the tangent bundle 
$TX^*$ which induces a trivialization of the tangent bundle on 
$M^4=X^4\setminus {\rm int}(D^4)$.  

Then a homotopy class of an oriented 2-plane field $\xi\subset TX^4|_{N^3}$ on $N^3$ corresponds to a couple of elements of the  second cohomotopy set: 
\[ [N^3,  \tilde G_2(\R^4)]\cong [N^3, S^2\times S^2]\cong [N^3, S^2]\times [N^3, S^2].\]

The second cohomotopy set $[K,S^2]$ of a 3-dimensional complex 
was described by Pontryagin in \cite{Pon}. First,  there is a natural map 
\[ \mu_{K}:[K,S^2]\to H^2(K;\Z),\]
which associates to the homotopy class of a map $f:K\to S^2$ the 
pull-back class $f^*[S^2]\in H^2(K;\Z)$. Moreover, Pontryagin proved 
that $\mu_{K}$ is surjective. 

Furthermore, for each $\beta\in H^2(K;\Z)$ there is a bijection between 
the homotopy classes in $\mu^{-1}(\beta)$ and  the 
set $H^3(K;\Z)/\psi_{\beta}(H^1(K;\Z))$, where 
$\psi_{\beta}(\alpha)=2\alpha\cup \beta\in H^3(K;\Z)$, for $\alpha\in H^1(K;\Z)$.
Note that $H^3(K;\Z)/\psi_{\beta}(H^1(K;\Z))$ is naturally an affine space. Namely, 
if two homotopy classes $f,g:K\to S^2$ have the same characteristic 
class $\mu_K(f)=\mu_K(g)=\beta$, then they differ by some element 
$d(f,g)\in H^3(K;\Z)/\psi_{\beta}(H^1(K;\Z))$. 

This description holds when we replace $K$ by $M^4$, $N^3$ or $S^3$. 
Note that the quotient $H^3(N^3;\Z)/\psi_{\beta}(H^1(N^3;\Z))$ has a simple description, when 
$N^3$ is an oriented 3-manifold. 
Consider some class $\beta\in H^2(N^3;\Z)$. If $\beta$ is torsion, then we set $d_{\beta}=0$. 
Otherwise, let $d_{\beta}\in \Z$ be twice the divisibility of the class 
$\beta$, namely the largest $k$ such that $2\beta$ is $k$ times a class 
in  $H^2(N^3;\Z)$ modulo torsion. Then an orientation on $N^3$ provides a canonical 
isomorphism (compare with \cite[Prop. 4.1]{Gompf2}):  
\[ H^3(N^3;\Z)/\psi_{\beta}(H^1(N^3;\Z))=\Z/d_{\beta}\Z.\]

If $\xi$ is the oriented 2-plane field above, we denote by 
$\pmb{\mu}_{N^3}(\xi)\in H^2(N^3;\Z)\oplus H^2(N^3;\Z)$ the pair of values of 
$\mu_{N^3}$ on the corresponding classes in $[N^3,S^2]\times [N^3, S^2]$. 
Given two oriented 2-plane fields $\xi$ and $\xi'$ with  
\[ \pmb{\mu}_{N^3}(\xi) = \pmb{\mu}_{N^3}(\xi')= (\beta_1,\beta_2)\in H^2(N^3;\Z)\oplus H^2(N^3;\Z),\]
there is a relative invariant: 
\[ d(\xi,\xi')\in \Z/d_{\beta_1}\Z\times  \Z/d_{\beta_2}\Z, \] 
which vanishes if and only if $\xi$ and $\xi'$ are homotopic oriented 2-plane fields.

\begin{proposition}\label{milnormod}
Assume  that $X^4$ is spin, $\tilde\xi$ an extension of $\xi$ over $M^4$ and 
$\pmb{\mu}_{N^3}(\xi)=(\beta_1,\beta_2)\in H^2(N^3;\Z)\times H^2(N^3;\Z)$. Then 
the index $(-\lambda_p(\xi^{\circ}), \rho_p(\xi^{\circ}))\in \Z\oplus \Z$  has a well-defined image in the quotient
\[  \Z/d_{\beta_1}\Z\times  \Z/d_{\beta_2}\Z,\]
independent of the choice of the extension $\tilde\xi$ on $X^*$. 
\end{proposition}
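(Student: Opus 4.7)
My plan is to reduce, by the parallelization, to two parallel computations on the $S^2$-factors of $\tilde G_2(\R^4)\cong S^2\times S^2$, and for each to produce an integer that both computes the difference of the two boundary Hopf invariants and is manifestly divisible by $d_{\beta_i}$. First, I would use the fixed trivialization of $TX^*$, which restricts to a trivialization of $TM^4$, to identify oriented 2-plane fields on $M^4$ with maps into $\tilde G_2(\R^4)\cong S^2\times S^2$. Two extensions $\tilde\xi_1,\tilde\xi_2$ of $\xi$ over $M^4$ then correspond to maps $f_1,f_2:M^4\to S^2\times S^2$ agreeing on $N^3$; writing $f_i=(f_i^{(1)},f_i^{(2)})$, the restrictions to $S^3\subset\partial M^4$ realize the pair $(-\lambda_p(\xi^\circ_i),\rho_p(\xi^\circ_i))\in\pi_3(S^2)\oplus\pi_3(S^2)\cong\Z\oplus\Z$, so it suffices to show that the first Hopf invariants $h_i:=-\lambda_p(\xi^\circ_i)$ of $g_i:=f_i^{(1)}|_{S^3}$ agree modulo $d_{\beta_1}$, and similarly for the second factor.

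Next, I would form the doubled $4$-manifold $W=M^4\cup_{N^3}\overline{M^4}$ by gluing two copies of $M^4$ along $N^3$ with orientation reversed on the second, so that $\partial W=S^3\sqcup\overline{S^3}$, and the spin structure on $X^4$ doubles to one on $W$. The agreement $f_1^{(1)}|_{N^3}=f_2^{(1)}|_{N^3}$ glues $f_1^{(1)},f_2^{(1)}$ into a single map $G:W\to S^2$ whose boundary restrictions have Hopf invariants $h_1$ and $-h_2$. Because $H^1(\partial W)=H^2(\partial W)=0$, the long exact sequence of $(W,\partial W)$ lifts the class $c:=G^*[S^2]\in H^2(W;\Z)$ uniquely to a relative class $\tilde c\in H^2(W,\partial W;\Z)$, and $I(G):=\tilde c\cup c\in H^4(W,\partial W;\Z)\cong\Z$ is a well-defined integer. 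A Pontryagin--Thom argument using a generic regular preimage $\Sigma=G^{-1}(q)$---a properly embedded framed oriented surface whose boundary framed 1-submanifolds $L_1\sqcup L_2\subset\partial W$ represent $h_1$ and $-h_2$ in $\pi_3(S^2)$---identifies $I(G)$ with $h_1-h_2$: the interior self-intersection vanishes by the Pontryagin framing, and the remaining contribution localizes in the boundary caps $C_i\subset\partial W$ with $\partial C_i=L_i$, where it computes the relevant linking numbers.

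The crux is then to show $I(G)\equiv 0\pmod{d_{\beta_1}}$. The restriction of $c$ to the gluing hypersurface $N^3\subset W$ is $\beta_1$, which modulo torsion is $k$-divisible with $d_{\beta_1}=2k$. Representing $\tilde c$ by its Poincar\'e--Lefschetz dual $[\Sigma']\in H_2(W;\Z)$ and arranging $\Sigma'$ transverse to $N^3$, the intersection $\Sigma'\cap N^3$ is a 1-cycle in $N^3$ whose Poincar\'e dual is $\beta_1$ and hence $k$-divisible in $H_1(N^3)/\mathrm{Tor}$. The doubled spin structure on $W$ forces $\tilde c$ to play the role of a characteristic element for the relative intersection form of $(W,\partial W)$, which promotes the $k$-divisibility of $\Sigma'\cap N^3$ to a $2k=d_{\beta_1}$ divisibility of $I(G)=[\Sigma']\cdot[\Sigma']$. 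An entirely parallel argument for the second $S^2$-factor yields the statement modulo $d_{\beta_2}$.

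The main obstacle is this last divisibility step: passing from the 3-dimensional divisibility of $\beta_1\in H^2(N^3)$ to the 4-dimensional divisibility of $\tilde c\cup c$. This is the only place where the spin hypothesis on $X^4$ is essentially used, and it is precisely the point at which the factor of $2$ in $d_{\beta_1}=2k$ (rather than just $k$) enters. The cleanest rigorous route is likely to identify $I(G)\bmod d_{\beta_1}$ with a specialization of the relative $\Z/d_{\beta}\Z$-valued invariant constructed by Gompf in the proof of \cite[Proposition 4.1]{Gompf2}, which is designed precisely around this type of obstruction and handles the parity subtlety in the case $k=1$.
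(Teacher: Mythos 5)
Your approach is genuinely different from the paper's, and unfortunately it has a gap precisely at the step you flag as the crux. The paper's argument is purely cohomological and quite short: using the fixed trivialization it treats the two $S^2$-factors separately, invokes Pontryagin's classification of $[K,S^2]$ (with the fibre $H^3(K;\Z)/\psi_\beta(H^1(K;\Z))$ over each $\beta\in H^2(K;\Z)$), and then observes from the long exact sequence of the pair $(M^4,\partial M^4)$ that $\operatorname{im}\bigl(i^*:H^3(M^4;\Z)\to H^3(N^3;\Z)\oplus H^3(S^3;\Z)\bigr)$ is exactly the diagonal $\Z\subset\Z\oplus\Z$. This diagonal observation is the whole engine: it transfers the $\Z/d_{\beta}\Z$-ambiguity of the Pontryagin difference class on $N^3$ verbatim to the Hopf invariant on $S^3$, with no intersection-form input, no doubling, and no characteristic-element considerations. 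The factor of $2$ in $d_\beta$ is already built into Pontryagin's theorem via $\psi_\beta(\alpha)=2\alpha\cup\beta$.

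Concretely, there are two problems with your proposal. First, the claimed identification $I(G)=h_1-h_2$ via a Pontryagin--Thom preimage is asserted but not justified; the phrase ``the interior self-intersection vanishes by the Pontryagin framing'' conflates the absolute self-intersection of the framed surface $\Sigma$ (which is indeed $0$) with the relative pairing $\tilde c\cup c$ in $H^4(W,\partial W;\Z)$, which picks up boundary contributions that you only gesture at. Second, and decisively, the divisibility $I(G)\equiv 0\pmod{d_{\beta_1}}$ is never proved --- you acknowledge this and defer to a ``specialization'' of Gompf's Proposition 4.1 without carrying out the reduction. That is where the proof would actually have to live. Your reading of the role of the spin hypothesis is also off: in the paper spin is used exactly once, at the outset, to make $X^*$ parallelizable so that 2-plane fields become maps to $\tilde G_2(\R^4)$; it plays no role in any characteristic-element or parity argument, and the factor of $2$ you want to extract from it is already present for free in $\psi_\beta$. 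If you want to salvage the doubling approach, you would need to (i) give a careful Pontryagin--Thom computation showing $I(G)=h_1-h_2$, including the boundary corrections, and (ii) prove the divisibility of $I(G)$ by $d_{\beta_1}$ directly, rather than by analogy --- at which point you would very likely rediscover the long exact sequence argument that the paper uses in one step.
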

\begin{proof}
The homotopy class of a 2-plane field $\tilde\xi$ on $M^4$ corresponds to an 
element of the set 
\[ [M^4,  \tilde G_2(\R^4)]\cong [M^4, S^2\times S^2]\cong [M^4, S^2]\times [M^4, S^2].\]
The restriction to boundary components induces two maps 
\[ F: [M^4,  \tilde G_2(\R^4)] \to [N^3,  \tilde G_2(\R^4)], \;  F(f)=[f|_{N^3}],\]
\[ P: [M^4,  \tilde G_2(\R^4)] \to [S^3,  \tilde G_2(\R^4)], \;  F(f)=[f|_{S^3}].\]
Our question can be rephrased as follows. Is 
$P(\tilde\xi)$ well-defined in $[S^3,  \tilde G_2(\R^4)]$ and independent of the choice of a lift $\tilde\xi\in F^{-1}(\xi)$?

Since $\tilde G_2(\R^4)\cong S^2\times S^2$ and the homotopy set functor behaves 
well with respect to products at the target, we can look at the factors independently 
and ask the same question when $\tilde G_2(\R^4)$ is replaced by  $S^2$. 

Pick up some $\xi\in [N^3,S^2]$ with $\mu_{N^3}(\xi)=\beta\in H^2(N^3;\Z)$ and some lift 
$\tilde\beta\in H^2(M^4;\Z)$ of $\beta$ under the inclusion induced homomorphism 
$H^2(M^4;\Z)\to H^2(N^3;\Z)$. Let $\tilde\xi\in [M^4,S^2]$ be an extension such that  
$\mu_{M^4}(\tilde\xi)=\tilde\beta\in H^2(M^4;\Z)$ and denote by 
$\xi^{\circ}$ the restriction of $\tilde\xi$ to $S^3$. 
We note first that $\mu_{S^3}(\xi^{\circ})=0$ since $H^2(S^3;\Z)=0$.

Now,  $\mu_{S^3}^{-1}(0)$ admits a natural (i.e. not affine) isomorphism to $H^3(S^3;\Z)/\psi_0(H^1(S^3;\Z))=H^3(S^3;\Z)=\Z$. Indeed, we can consider the relative invariant $d(\xi)=d(\xi, \mathbf 0)$ with respect to the trivial 
2-plane field $\mathbf 0$, namely the one corresponding to a null-homotopic map $S^3\to S^2$.

Consider the map induced by the inclusion: 
\[ i^*: H^3(M^4;\Z)\to H^3(\partial M^4;\Z)\cong H^3(N^3;\Z)\oplus H^3(S^3;\Z).\]
The long exact sequence in cohomology associated to the pair $(M^4,\partial M^4)$ reads:
\[  H^3(M^4;\Z)\to H^3(\partial M^4;\Z)\stackrel{\delta}{\to} H^4(M^4,\partial M^4;\Z)\to H^4(M^4)=0.\]
Therefore the image of $i^*$ is the kernel of the boundary homomorphism $\delta$.  
Now, by standard arguments $H^4(M^4,\partial M^4;\Z)\cong \Z$ and  $H^3(\partial M^4;\Z)\cong \Z\oplus \Z$  
is freely generated by  the fundamental classes
of $N^3$ and $S^3$ with the inherited orientations from $X^4$.  
Eventually, the boundary homomorphism $\delta$ reads: 
\[ \delta(a[N^3]+b[S^3])=a-b\in \Z.\]
This implies that the image of $i^*$ coincides with the diagonal within  
$\Z\oplus \Z=H^3(\partial M^4;\Z)$, and hence the homomorphisms induced by the inclusions 
\[  H^3(M^4;\Z)\to H^3(N^3;\Z), \;  H^3(M^4;\Z)\to H^3(S^3;\Z) \]
are surjective.

For each $\tilde\beta\in H^2(X^4)$ we choose some $\xi_{\tilde\beta}$ having the properties that $\mu_{M^4}(\xi_{\tilde\beta})=\tilde\beta$ and 
$\tilde\beta|_{S^3}=\mathbf 0$. The existence of such  a $\xi_{\tilde\beta}\in [X^4,S^2]$ follows from the surjectivity of the map induced by the inclusion $H^3(M^4)\to H^3(S^3)$.

Let  
$d(\xi,\xi_{\tilde\beta}|_{N^3})=\alpha\in H^3(N^3;\Z)/\psi_{\beta}(H^1(N^3;\Z))$. 
Set $\tilde\alpha\in  H^3(M^4;\Z)/\psi_{\tilde\beta}(H^1(M^4;\Z))$ for  
the lift $d(\tilde\xi,\xi_{\tilde\beta})$ of $\alpha$ to $M^4$. 
Then  $d(\xi^{\circ})=d(\xi^{\circ}, \mathbf 0)\in \mu^{-1}(0)$ is the image of $\tilde\alpha$ into $H^3(S^3;\Z)/\psi_0(H^1(S^3;\Z))$.

In particular, if we fix the class $\alpha\in H^3(N^3;\Z)$, then the image of its 
lift $\tilde\alpha\in H^3(M^4;\Z)$ into $H^3(S^3;\Z)$ is well-defined independently on 
the lift, because the image of $i^*$ coincides with the diagonal within  
$\Z\oplus \Z$. 

Therefore, when $\mu(\xi)$ is torsion in $H^2(N^3;\Z)$, then the 
element $\xi^{\circ}$ is well-defined, up to homotopy, and independent of the choice 
of the extension $\tilde\xi$. 

On the other hand, if $d(\xi,\xi_{\tilde\beta})$ is only defined modulo $d_{\beta}$, then 
$d(\xi^{\circ})$ is only defined  modulo the image of $d_{\beta}\Z\subseteq \Z\cong H^3(N^3;\Z)$ into $H^3(S^3;\Z)$. By the discussion above its image 
is a copy of $d_{\beta}\Z\subseteq \Z=H^3(S^3;\Z)$. This settles the claim.  
\end{proof}

\subsection{Shell invariants} 
If $X^4$ is a disk block, we can take for $N^3$ any 3-manifold bounding some 
4-manifold containing all critical points. 
Note that $X^4$ is a 2-handlebody and hence it is a spin manifold. 
Moreover, it is known that any oriented 2-plane field $\xi$ on $\partial X^4$ extends 
to an oriented 2-plane $\tilde\xi$ on $X^4$ with a single singularity (see e.g. 
\cite{Gompf2}, proof of Lemma 4.4). 
Every singular fibration $f:X^4\to D^2$ induces an oriented 2-plane field 
$\tilde \xi =\ker Df$ with finitely many singularities. Let $D^4\subset X^4$ be an embedded disk containing the singularities. Observe that $\tilde \xi|_{X^4\setminus{\rm int}(D^4)}$ can be extended over $D^4$ with a single singularity $p$ in order to fit 
the hypothesis above. Moreover, the index of $\tilde \xi|_{S^3}$ 
is the total index of $\tilde\xi$.  From Proposition \ref{milnormod} we derive:  

\begin{corollary}\label{shellindex1}
Let $\xi$ be the restriction of $\ker Df$ to $N^3$ and $\pmb{\mu}_{N^3}(\xi)=(\beta_1,\beta_2)$. 
Then 
\[ (-\lambda(\xi),\rho(\xi))=(-\lambda_p(\xi^{\circ}), \rho_p(\xi^{\circ}))\in  \Z/d_{\beta_1}\Z\times  \Z/d_{\beta_2}\Z,\]
is an invariant of the shell equivalence of $X^4$.  
\end{corollary}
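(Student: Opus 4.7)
The plan is to derive the corollary directly from Proposition \ref{milnormod}, after checking that the data needed to apply that proposition is intrinsic to the shell of $X^4$. I will take $N^3=\partial X^4$: since $X^4$ is a 2-handlebody it carries a spin structure, so the standing hypothesis of Proposition \ref{milnormod} is met. First, picking an embedded ball $D^4\subset X^4$ that contains all critical points of $f$, setting $M^4=X^4\setminus {\rm int}(D^4)$ and replacing $\tilde\xi=\ker Df$ on $D^4$ by the single-singularity extension described just before the corollary, the identification of indices recalled there gives $(-\lambda_p(\xi^\circ),\rho_p(\xi^\circ))=(-\lambda(\xi),\rho(\xi))$ in $\Z\oplus\Z$.

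Next I will verify that the homotopy class of $\xi=\ker Df|_{\partial X^4}$, and hence the pair $\pmb{\mu}_{N^3}(\xi)=(\beta_1,\beta_2)$, depends only on the shell. On the vertical part of $\partial X^4$ the 2-plane field $\xi$ is tangent to the fibers of the vertical boundary fibration, and since a shell equivalence is by definition a homeomorphism restricting to an isomorphism of these fibrations over the identity of $S^1$, it transports $\ker Df$ onto $\ker Df'$ on the vertical boundary. On the horizontal part $\partial F\times D^2$, canonical once the generic fiber $F$ is fixed (and $F$ is itself shell-invariant, being read off the vertical boundary fibration), the 2-plane field $\xi$ is tangent to $F$, containing both the $\partial F$-direction and the inward normal to $\partial F$ in $F$. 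Consequently, shell equivalence identifies $\xi$ with $\xi'$ on $\partial X^4$ up to homotopy, so the classes $(\beta_1,\beta_2)$ and the divisibilities $d_{\beta_1},d_{\beta_2}$ are shell invariants.

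Finally, I will invoke Proposition \ref{milnormod} for the fixed 4-manifold $X^4$ with fixed boundary field $\xi$: for any two extensions of $\xi$ over $X^*$ with a single singularity at $p$, the indices at $p$ coincide modulo $(d_{\beta_1},d_{\beta_2})$. Given two shell-equivalent singular fibrations $f,f':X^4\to D^2$, I transport $\ker Df'$ through the shell homeomorphism and then perform the single-singularity collapse; this produces two such extensions of the common $\xi$, whose indices at $p$ are exactly $(-\lambda(\xi),\rho(\xi))$ and $(-\lambda(\xi'),\rho(\xi'))$ by the additivity identification of the first step. Proposition \ref{milnormod} then delivers the required equality in $\Z/d_{\beta_1}\Z\times \Z/d_{\beta_2}\Z$.

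The main obstacle I anticipate is in the second step, since the definition of shell equivalence formally constrains only the vertical boundary fibration while the relevant 2-plane field lives on all of $\partial X^4$. I would handle this by observing that the horizontal part is canonically identified with $\partial F\times D^2$, with the second-factor projection playing the role of $f$, so the resulting 2-plane field is canonical once $F$ is fixed; the residual ambiguity of a collar of $\partial F$ in $F$ is contractible and thus invisible at the level of homotopy classes.
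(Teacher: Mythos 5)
Your approach is the same as the paper's: set $N^3 = \partial X^4$, use the spin structure on the 2-handlebody $X^4$ to put yourself in the situation of Proposition \ref{milnormod}, collapse the critical points to a single singularity $p$ inside an embedded $D^4$ (so that $\xi^{\circ}$ carries the total index), and invoke the proposition to conclude that the index modulo $(d_{\beta_1},d_{\beta_2})$ depends only on $(X^4,\xi)$. The paper presents the corollary essentially as an immediate consequence of Proposition \ref{milnormod} after exactly this setup, and does not explicitly spell out the verification that you correctly identify as the non-trivial hidden step: that the homotopy class of $\xi=\ker Df|_{\partial X^4}$ — and hence $(\beta_1,\beta_2)$ and $(d_{\beta_1},d_{\beta_2})$ — is itself a shell invariant. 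Your check on the vertical boundary is clean: there $\ker Df$ is literally the tangent plane field to the fibers of the vertical boundary fibration, which a shell equivalence preserves by definition.

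On the horizontal part your argument is a little imprecise but the idea is sound and can be made rigorous. Two small cautions. First, $\ker Df$ at $(a,z)\in\partial F\times D^2$ is tangent to the actual fiber $f^{-1}(z)$ (not to a fixed reference copy of $F$), and that 2-plane is tangent to $\partial X^4$ precisely when $z\in\partial D^2$, tipping transversally into $X^4$ for interior $z$; so the homotopy identifying it with the field $T(\partial F)\oplus\langle\nu\rangle$ is not rel the gluing circle $\partial F\times S^1$ without some care about what happens there (the two plane fields are genuinely different on $\partial F\times S^1$). What you really need, and what is true, is that any two singular fibrations with the same vertical boundary fibration yield plane fields on the horizontal solid tori that are homotopic rel $\partial F\times S^1$; a direct way to see this is to note that both deform through open-book-compatible plane fields to a common normal form determined by the binding together with the page foliation near it, which is read off the vertical boundary fibration data. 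Second, a shell equivalence is only required to preserve the vertical boundary fibration; it sends horizontal boundary to horizontal boundary but need not respect a chosen product structure $\partial F\times D^2$, so the word ``canonical'' in your last paragraph should be interpreted at the level of homotopy classes rel $\partial F\times S^1$, as you indicate. With those clarifications your proof is correct and fills in exactly the detail the paper leaves implicit.
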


An interesting particular case is when $N^3=\partial X^4$ and the generic fiber is closed. 
Then the  oriented 2-plane field $\xi\subset TM^4|_{N^3}$ is actually tangent to $N^3$. 
The homotopy classes of oriented  2-planes tangent to $N^3$, also called combings, 
correspond to elements of the set: 
\[ [N^3, \tilde G_2(\R^3)]\cong [N^3, S^2].\]
The boundary is collared and we have a splitting of $TM^4|_{N^3}$ as the sum 
of $TN^3$ and a trivial line bundle, which will be fixed along with the trivialization of $TM^4$.   
This provides a well-defined  inclusion $\R^3\to \R^4$ between tangent spaces  and hence 
a map $\tilde G_2(\R^3)\to \tilde G_2(\R^4)$ which induces an injection: 
\[ \iota_{N^3}: [N^3, \tilde G_2(\R^3)]\to [N^3,  \tilde G_2(\R^4)].\]
Thus the homotopy class of a 2-plane field $\xi$ as above belongs to the image of $\iota$. 

Recall that $\tilde G_2(\R^3)=SO(3)/1\times SO(2)$ is diffeomorphic to $S^2$. 
In order to retrieve $\iota$ it is enough to describe the inclusion 
$\iota: \tilde G_2(\R^3)\cong S^2\to \tilde G_2(\R^4)\cong S^2\times S^2$.

The vector space of purely imaginary quaternions (i.e. with vanishing real part) identifies with $\R^3\subset \R^4$. Then the group of unit quaternions $S^3$ acts 
on $\R^3$ by means of the formula: 
\[ q(y)=q\cdot y\cdot q^{-1}, \; y\in \R^3, q\in S^3. \]
We then have  a homomorphism $\overline{Q}: S^3\to SO(3)$ which fits in an exact sequence: 
\[ 1\to \Z/2\Z=\langle -1\rangle\to S^3\stackrel{\overline{Q}}{\to} SO(3)\to 1.\]
We then have a commutative diagram: 
\[\begin{array}{ccc}
S^3 & \stackrel{\overline{Q}}{\longrightarrow}& SO(3)\\
\downarrow & & \downarrow \\
S^3\times S^3 & \stackrel{{Q}}{\longrightarrow} & SO(4)\\
\end{array}
\]
where the vertical maps are $\Delta:S^3\to S^3\times S^3$ and 
$J:SO(3)\to SO(4)$ given by $\Delta(x)=(x,x)$ and 
$J(A)=1\oplus A$. 

Therefore $\tilde G_2(\R^3)$ is identified with 
$\overline{Q}(S^3)/\overline{Q}(S^1)$ and 
$\tilde G_2(\R^4)$ with 
$Q(S^3\times S^3)/Q(S^1\times S^1)$. 
Observe that $\overline{Q}(S^3)/\overline{Q}(S^1)$ is 
homeomorphic to the quotient $S^3/S^1$, namely $S^2$ while 
$Q(S^3\times S^3)/Q(S^1\times S^1)$ is 
homeomorphic to the quotient $S^3\times S^3/S^1\times S^1$, namely $S^2\times S^2$. 
It follows that the map $\iota:S^2\to S^2\times S^2$ is covered by the 
diagonal $\Delta$ and hence it is identified with the diagonal inclusion. 

Once we have fixed the trivialization of $M^4$ and hence the 
identifications 
\[[N^3, \tilde G_2(\R^3)]= [N^3,S^2], \; 
[N^3, \tilde G_2(\R^4)]=[N^3,S^2]\times [N^3, S^2],\]
we derive that
\[\iota_{N^3}(f)=(f,f), \; f\in [N^3,S^2].\]

\begin{corollary}\label{shellindex2}
Let $X^4$ be a disk block whose generic fiber $F$ has empty boundary 
and $\xi(f)$ be the restriction of $\ker Df$ to $\partial X^4$. 
Let the characteristic class of the combing be 
$\mu_{N^3}(\xi)=\beta\in H^2(N^3)$. 
Then 
\[ (-\lambda(\xi),\rho(\xi))\in  \Z/d_{\beta}\Z\times  \Z/d_{\beta}\Z,\]
is an invariant of the shell equivalence class of $X^4$.  
\end{corollary}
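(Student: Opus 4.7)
The plan is to deduce Corollary \ref{shellindex2} directly from Corollary \ref{shellindex1} by showing that, when the generic fiber is closed, the two characteristic classes appearing in Proposition \ref{milnormod} coincide, so that the shell invariant lives in a diagonal copy of $\Z/d_\beta\Z \times \Z/d_\beta\Z$.

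First I would verify that $\xi(f) = \ker Df|_{\partial X^4}$ is in fact a combing of $N^3 = \partial X^4$. Because the generic fiber $F$ has empty boundary, the vertical boundary $\partial F \times D^2$ of the disk block is empty, and the restriction $f|_{\partial X^4}:\partial X^4 \to \partial D^2 = S^1$ is a genuine smooth fiber bundle with fiber $F$. Its vertical tangent bundle is precisely $\xi(f)$, so $\xi(f) \subset TN^3 \subset TM^4|_{N^3}$, confirming that $\xi(f)$ is tangent to $N^3$ and hence represents an element of $[N^3, \tilde G_2(\R^3)] \cong [N^3, S^2]$ with a single characteristic class $\beta = \mu_{N^3}(\xi) \in H^2(N^3;\Z)$.

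Next I would invoke the identification of the natural map $\iota_{N^3}:[N^3,\tilde G_2(\R^3)]\to [N^3,\tilde G_2(\R^4)]$ established in the discussion preceding the statement. After fixing a trivialization of $TM^4|_{N^3}$ splitting off a normal line bundle to $N^3$ (which exists because the boundary is collared and $X^4$ is almost parallelizable as a spin 2-handlebody), the map $\iota:\tilde G_2(\R^3)\to \tilde G_2(\R^4)$ is identified with the diagonal $S^2\to S^2\times S^2$. Consequently, viewing $\xi(f)$ as a tangent 2-plane field of $M^4$ along $N^3$ gives
\[
\pmb{\mu}_{N^3}(\xi) = \iota_{N^3}(\xi) = (\beta,\beta) \in H^2(N^3;\Z)\oplus H^2(N^3;\Z).
\]

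With this identification, Corollary \ref{shellindex1} applies with $\beta_1 = \beta_2 = \beta$, giving
\[
(-\lambda(\xi),\rho(\xi)) \in \Z/d_\beta\Z \times \Z/d_\beta\Z
\]
as a well-defined invariant of the extension-free datum $\xi(f)$, independent of the extension $\tilde\xi$ used to realize it by Proposition \ref{milnormod}. Shell equivalence of $X^4$ by definition preserves the vertical boundary fibration up to an isomorphism covering the identity on $S^1$, and hence preserves $\xi(f)$ up to homotopy together with its characteristic class $\beta$; so the resulting pair in the quotient is a shell invariant.

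The main technical point I expect to verify carefully is the compatibility of the chosen trivialization of $TM^4$ (used in Proposition \ref{milnormod}) with the splitting $TM^4|_{N^3} = TN^3 \oplus \nu$ needed for the diagonal identification of $\iota_{N^3}$. This compatibility is essentially built into the discussion immediately preceding the corollary, so the work is in citing the right lines rather than producing a new argument; everything else reduces to a direct specialization of Corollary \ref{shellindex1}.
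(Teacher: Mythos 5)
Your proposal is correct and follows precisely the route the paper intends: the paper establishes, immediately before the corollary, that $\iota_{N^3}$ is the diagonal $f\mapsto(f,f)$ under the fixed trivialization, and the corollary is then a direct specialization of Corollary~\ref{shellindex1} with $\beta_1=\beta_2=\beta$, which is exactly what you do. Your care in checking that $\xi(f)$ is a combing when $\partial F=\emptyset$ and in flagging the compatibility of trivializations matches what the paper's preceding discussion already sets up.
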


An instructive  example is the case of a disk block $f:X^4\to D^2$ with trivial boundary monodromy, namely such that $\partial X^4=F\times S^1$. The 2-plane 
field $\xi(f)$ is the tangent plane at the $F$ factor and hence it is the pull-back 
of the tangent bundle $TF$ under the projection $F\times S^1\to F$. 
Let $[S^1]$ be the homology class of $S^1$ in $H_1(F\times S^1)$ and 
$PD$ denote the Poincar\'e dual. We derive that 
the Euler class of $\xi(f)$ is $\chi(f) PD[S^1]$. As $H^2(F\times S^1)$ is torsion free 
the class $\mu(\xi(f))=\beta$ is half the Euler class and so 
$d_{\beta}=|\chi(F)|$. Note that both $d_{\beta}$ and 
$(-\lambda(\xi(f)),\rho(\xi(f)))$ are independent of the ambient 
trivialization of $X^4$.

In particular, if the generic fiber $F$ of the disk block is a torus, 
then the invariants  $(-\lambda(\xi),\rho(\xi))\in  \Z\oplus  \Z$, are well-defined.

\begin{remark}
The previous results suggest a close relation between the index and the 
(refined) Gompf invariants of oriented 2-plane fields over $\partial X^4$ 
(see \cite{Gompf2,Kup}). 
\end{remark}

%\begin{remark}
%We don't know whether these invariants are sufficient to classify disk blocks $X^4\to D^2$  up to stable shell equivalence, i.e. shell equivalence after  plumbing with achiral Lefschetz fibrations, as it is the case for $X^4=D^4$ (see section \ref{unfoldings}).  
%\end{remark} 

\subsection{Manifolds without singular fibrations}
\begin{proposition}\label{notfiber}
If $M^4$ is a closed orientable 4-manifold with $b_2(M)=0$ which admits a singular fibration over 
a closed oriented surface $N^2$, then $b_1(M)\leq 1$. In particular, $\varphi_s(M^4, N^2)=\infty$, whenever $b_1(M)\geq 2$ and $b_2(M)=0$. 
\end{proposition}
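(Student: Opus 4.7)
The plan is to combine the Hirzebruch--Hopf constraint (Theorem \ref{Hirzebruch-Hopf}) with Rudolph's identity (Theorem \ref{Rudolph}) and then compare the resulting value of $e(M)$ against the Euler characteristic forced by Poincaré duality.

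First I would observe that a singular fibration $f:M^4\to N^2$ yields a canonically oriented $2$-plane field $\xi=\ker Df$ on $M^4$ with isolated singularities exactly at the critical points of $f$. Orientability of $\xi$ follows from the orientations of $M^4$ and $N^2$ via the short exact sequence $0\to \xi\to TM\to f^*TN\to 0$ on the regular locus. By the identification discussed in Section 4.1, the index of $\xi$ at a critical point $p$ is $(-\lambda(f)_p,\rho(f)_p)$, so the global index equals $(-\lambda(f),\rho(f))$ with $\lambda(f)=\sum_p\lambda(f)_p$ and $\rho(f)=\sum_p\rho(f)_p$.

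Next I would feed these data into Theorem \ref{Hirzebruch-Hopf}. The hypothesis $b_2(M)=0$ gives $H=H^2(M;\Z)/{\rm Tor}=0$, so $\Omega(M)=\{0\}$ and $\sigma(M)=0$. The Hirzebruch--Hopf formulas thus collapse to $4\lambda(f)=2e(M)=4\rho(f)$, whence $\lambda(f)=\rho(f)=e(M)/2$. Summing Rudolph's identity $\mu(f)_p=\lambda(f)_p+\rho(f)_p$ over the finitely many critical points yields
\[
\mu(f)=\lambda(f)+\rho(f)=e(M).
\]

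To finish, I would use that each local Milnor number $\mu(f)_p$ is the first Betti number of a compact orientable surface (the local Milnor fiber), hence non-negative; consequently $\mu(f)\ge 0$ and $e(M)\ge 0$. By Poincaré duality $b_3(M)=b_1(M)$, so $e(M)=2-2b_1(M)+b_2(M)=2-2b_1(M)$, forcing $b_1(M)\le 1$. When $b_1(M)\ge 2$ and $b_2(M)=0$ no singular fibration $M^4\to N^2$ exists for any $N^2$, i.e.\ $\varphi_s(M^4,N^2)=\infty$.

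There is no substantial obstacle: the argument is essentially a plug-in of the two cited theorems. The only verification required is that $\xi=\ker Df$ honestly satisfies the hypotheses of Theorem \ref{Hirzebruch-Hopf} (oriented $2$-plane field with finitely many isolated singularities on the closed oriented $M^4$), and that the summation of local indices to get a global index is the one used in that theorem, both of which are immediate from the setup.
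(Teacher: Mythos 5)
Your proof is correct and follows exactly the same route as the paper: apply Theorem \ref{Hirzebruch-Hopf} with $H=0$ to get $\lambda(f)=\rho(f)=e(M)/2$, sum Rudolph's identity to get $\mu(f)=e(M)=2-2b_1(M)$, and invoke non-negativity of Milnor numbers. The only cosmetic difference is that you spell out the intermediate steps (orientability of $\ker Df$, Poincar\'e duality) which the paper leaves implicit.
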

\begin{proof}[Proof of Proposition \ref{notfiber}]
If $H=0$ then $\sigma(M)=0$ and the Hirzebruch--Hopf theorem \ref{Hirzebruch-Hopf} above shows that the index 
of $\xi$ is $(e(M)/2, e(M)/2)$. From Rudolph's theorem \ref{Rudolph} we derive  
\[ \mu(f)=e(M)=2-2b_1(M).\]
Now the Milnor number of an isolated singularity is always non-negative and hence $b_1(M)\leq 1$, as claimed. 
\end{proof}

\begin{proposition}\label{notfibertorus}
If $M^4$ is a closed orientable 4-manifold with $b_2(M)=0$ and $b_1(M)\leq 1$ which admits a singular fibration over a closed oriented surface $N^2$, then either $N^2$ is a torus, or else 
the generic fiber $F$ is a torus. 
\end{proposition}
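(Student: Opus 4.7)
The plan is to compute $e(M)$ directly from the singular fibration and combine the answer with the identity $\mu(f)=e(M)=2-2b_1(M)$ already established inside the proof of Proposition \ref{notfiber}.

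By left-composing the local models around the critical points of $f$ with diffeomorphisms of $D^2$ as in Section \ref{cutpaste}, I may assume without loss of generality that the critical points have distinct images $B=\{q_1,\ldots,q_r\}\subset N^2$. I would then choose small pairwise disjoint disks $D_i\subset N^2$ centered at $q_i$, write $X_i=f^{-1}(D_i)$ for the resulting disk blocks, and set $N_0=N^2\setminus\bigsqcup_i\operatorname{int}(D_i)$. Over $N_0$ the map $f$ restricts to an honest fiber bundle with fiber $F$, giving $e(f^{-1}(N_0))=e(F)(e(N)-r)$; each $f^{-1}(\partial D_i)$ is the mapping torus of the corresponding vertical boundary monodromy, so has Euler characteristic zero.

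Next I would exploit the spine description $X^4\searrow\Pi(f)$ from Section 3 to conclude that each $X_i$ deformation retracts onto its singular fiber $f^{-1}(q_i)\cong F/F_{p_i}$, where $F_{p_i}$ is the local Milnor fiber at the unique critical point $p_i\in X_i$. Since $F_{p_i}$ is a connected surface with nonempty boundary, $e(F_{p_i})=1-\mu_{p_i}$ with $\mu_{p_i}=b_1(F_{p_i})$, hence $e(X_i)=e(F)-e(F_{p_i})+1=e(F)+\mu_{p_i}$. Inclusion--exclusion over $M=f^{-1}(N_0)\cup\bigcup_i X_i$ then yields the key identity
\[ e(M)=e(F)\bigl(e(N)-r\bigr)+\sum_{i=1}^{r}\bigl(e(F)+\mu_{p_i}\bigr)=e(F)\,e(N)+\mu(f). \]

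Finally, Proposition \ref{notfiber} gives $\mu(f)=e(M)=2-2b_1(M)$ under the present hypotheses, so subtracting it from the displayed formula forces $e(F)\,e(N)=0$. The fiber $F=f^{-1}(q)$ over a regular value is an orientable closed surface, since its normal bundle in the oriented manifold $M$ is pulled back from $T_qN$ and hence trivial; $N^2$ is orientable by assumption. Therefore $e(F)=0$ forces $F$ to be a torus while $e(N)=0$ forces $N^2$ to be a torus, and one of the two alternatives must hold. The only real obstacle I anticipate is the Euler-characteristic bookkeeping near the singular fibers, but both the retraction $X_i\searrow F/F_{p_i}$ and the vanishing of $e(f^{-1}(\partial D_i))$ are routine consequences of the cut-and-paste description in Section \ref{cutpaste}.
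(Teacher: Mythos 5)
Your proof is correct and follows the same strategy as the paper: compute $e(M)$ via the spine/singular-fiber structure of the disk blocks to obtain $e(M) = e(F)e(N) + \mu(f)$, then combine with $\mu(f) = e(M)$ from Proposition~\ref{notfiber}. The only cosmetic difference is that you decompose $M$ using one small disk per critical value while the paper gathers all critical values into a single disk block $D^2\subset N^2$ (so $X=f^{-1}(D^2)\searrow\Pi(f)$ with $e(\Pi(f))=e(F)+\mu(f)$); the Euler-characteristic bookkeeping is identical.
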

\begin{proof}
We can compute $e(M)$ by using the polyhedron $\Pi(f)$, as follows.
Let $D^2\subset N^2$ be a disk  containing all the critical values of $f$ and $X=f^{-1}(D^2)$. 
Then $M$ is the union of  $(N^2-{\rm int}(D^2))\times F$ and $X=f^{-1}(D^2)$ along $S^1\times F$. 
As $X$ retracts onto $\Pi(f)$, we derive that 
\[ e(X)=e(\Pi(f)).\]
As $\Pi(f)$ is obtained from $F$ by adding cones over subsurfaces $F_i$, 
\[ e(\Pi(f))=e(F)+\mu(f).\]
The additivity of the Euler characteristic for unions over $F\times S^1$ yields
\[ e(M)=e(N)e(F) +\mu(f).\]
We noted in the proof of proposition \ref{notfiber} that Theorem \ref{Hirzebruch-Hopf} along 
with  Theorem \ref{Rudolph} imply that 
\[ \mu(f)=e(M).\]
Therefore $e(F)e(N)=0$, as claimed. 
\end{proof}

\begin{proposition}
Let $M^4$ be a smooth closed oriented manifold of dimension $4$ with 
 $b_1(M)=1$ and $b_2(M)=0$
that admits some singular fibration $f:M^4\to S^2$.                                                                                          
Then $M^4$ is a  topological fibration by tori over $S^2$. 
\end{proposition}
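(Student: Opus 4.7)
The plan is to chain Propositions~\ref{notfiber} and~\ref{notfibertorus} with the computation of the total Milnor number already carried out in the proof of Proposition~\ref{notfiber}, and then observe that singularities with vanishing Milnor number are only smoothly singular, not topologically singular. Poincar\'e duality gives $b_3(M)=b_1(M)=1$ and $b_4(M)=1$, so $e(M)=1-1+0-1+1=0$. Because the base $S^2$ is not a torus, Proposition~\ref{notfibertorus} forces the generic fiber $F$ to be a torus.

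Next, as in the proof of Proposition~\ref{notfiber}, combining Theorem~\ref{Hirzebruch-Hopf} (using that $H=H^2(M;\Z)/\mathrm{Tor}=0$, whence $\sigma(M)=0$ and $\Omega(M)=\{0\}$) with Theorem~\ref{Rudolph} yields $\mu(f)=e(M)=0$. Since every local Milnor number $\mu(f)_p=b_1(F_p)$ is non-negative, this forces $\mu(f)_p=0$ at every singular point $p$.

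The heart of the argument is to show that such a singularity is topologically a regular point. With $b_1(F_p)=0$ and $F_p$ a connected compact surface with boundary, $F_p$ must be a disk; its local link is therefore an unknot, and the open book monodromy of the disk-fibered unknot is trivial in $M(D^2)$. The local model $\psi_{\mathcal E}\colon (D^4,0)\to (D^2,0)$ attached to this trivial open book is topologically conjugate to the product projection $D^2\times D^2\to D^2$, even though $d\psi_{\mathcal E}$ vanishes at the origin. Granting this, the monodromy of $f|_{M\setminus f^{-1}(B)}\colon M\setminus f^{-1}(B)\to S^2\setminus B$ around each critical value is the image of the identity under $\iota_{F_p}\colon M(F_p)\to M(F)$, hence trivial. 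Triviality of the local monodromy around every critical value allows the fibration over $S^2\setminus B$ to extend as a locally trivial topological bundle over all of $S^2$; the singular fibers are $F/F_p\cong T^2/D^2$, and since collapsing an embedded closed disk in a surface to a point returns the same surface up to homeomorphism, each singular fiber is again a torus, yielding the desired topological $T^2$-bundle structure.

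The main obstacle I anticipate is verifying the topological triviality of the local model $\psi_{\mathcal E}$ when the open book is the trivial one: one needs to check directly, by unpacking the piecewise formula defining $\psi_{\mathcal E}$ in Section~\ref{fiberedlinks} on $S^3\setminus N(\text{unknot})\cong S^1\times D^2$, that the resulting map $D^4\to D^2$ is a topologically locally trivial $D^2$-bundle, not merely a continuous map with contractible fibers. Once this is in place, all remaining steps are bookkeeping from Propositions~\ref{notfiber} and~\ref{notfibertorus} and the homotopical identities $\mu=\lambda+\rho$ and $\mu(f)=e(M)$.
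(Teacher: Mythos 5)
Your proof is correct and follows essentially the same route as the paper: use Proposition~\ref{notfiber} (via Theorems~\ref{Hirzebruch-Hopf} and~\ref{Rudolph}) to get $\mu(f)=e(M)=0$, conclude every local Milnor fiber is a disk and hence every local link is an unknot, and combine this with the torus-fiber conclusion of Proposition~\ref{notfibertorus}. One small stylistic remark: in your last paragraph the detour through triviality of the peripheral monodromies is not quite the cleanest logic — the direct point, which the paper uses implicitly and which you already establish, is that the local model $\psi_{\mathcal E}$ for the trivial open book is topologically conjugate to the product projection $D^2\times D^2\to D^2$, so $f$ has no topological critical points and is therefore globally a locally trivial topological $T^2$-bundle; monodromy triviality is then a consequence rather than an intermediate step needed to ``extend'' anything.
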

\begin{proof}
We know that $\mu(f)=0$ by Proposition \ref{notfiber}. This implies that 
all local links are unknots and hence $f$ has no topological critical points.
Thus $M^4$ topologically fibers over $S^2$ and the fibers are tori. 
\end{proof}

An example as in the proposition above is $S^1\times S^3$ which fibers over $S^2$.

The following is a slight extension of \cite[Thm. 8.4.13]{GS}: 
\begin{proposition}
If $M^4$ is a smooth closed oriented 4-manifold with positive definite intersection pairing $S$ 
and $f:M^4\to N^2$ is a singular fibration over a closed oriented surface then:
\[ 1-b_1(M)+b_2(M) \geq \lambda(f), \, \rho(f)\geq 1-b_1(M).\]
\end{proposition}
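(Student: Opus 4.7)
The plan is to apply the Hirzebruch--Hopf theorem (Theorem \ref{Hirzebruch-Hopf}) to the oriented $2$-plane field $\xi = \ker Df$ on $M^4$ induced by the singular fibration. This field has only finitely many singularities, exactly the critical points of $f$, and by the identification recalled in Section~4.1 its total index is the pair $(-\lambda(f), \rho(f))$ built from the sum of the enhanced Milnor numbers at the critical points. The theorem then produces values $\alpha, \beta \in \Omega(M)$ satisfying
\[ 4\lambda(f) = 2e(M) + 3\sigma(M) - \alpha, \quad 4\rho(f) = 2e(M) - 3\sigma(M) + \beta.\]

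Under the standing hypotheses, $S$ being positive definite gives $\sigma(M) = b_2(M)$, while Poincar\'e duality yields $e(M) = 2 - 2 b_1(M) + b_2(M)$. Substituting into the two displayed identities transforms the desired bounds $\lambda(f) \leq 1 - b_1(M) + b_2(M)$ and $\rho(f) \geq 1 - b_1(M)$ into the single algebraic statement that $\alpha \geq b_2(M)$ and $\beta \geq b_2(M)$, i.e., that every element of $\Omega(M)$ is at least $b_2(M)$.

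The main obstacle, and indeed the only substantive ingredient, is this lower bound on $\Omega(M)$. Note that it fails for general positive definite unimodular forms, since the $E_8$ lattice admits $0$ as a characteristic element; hence the smoothness of $M^4$ must enter the argument. I would invoke Donaldson's diagonalization theorem: because $S$ is positive definite and $M^4$ is smooth, $S$ is isomorphic to the standard diagonal form $\langle 1\rangle^{b_2(M)}$, whose characteristic vectors are precisely those with all coordinates odd. For any such $w$ one has $S(w,w) = \sum w_i^2 \geq b_2(M)$, so in fact $\Omega(M) \subseteq \{b_2(M),\, b_2(M)+8,\, b_2(M)+16, \ldots\}$, as required. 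An alternative route, avoiding Donaldson, is Elkies' theorem bounding from below the norm of any characteristic vector in a positive definite unimodular lattice by the rank. Either ingredient closes the argument, and no use of Rudolph's theorem (Theorem \ref{Rudolph}) is needed here since Hirzebruch--Hopf delivers $\lambda(f)$ and $\rho(f)$ individually rather than through their sum.
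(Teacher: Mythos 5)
Your argument is in substance the same as the paper's: apply Hirzebruch--Hopf (equivalently, Lemma \ref{classes} for the characteristic classes $c_1(J(f))$ and $c_1(J'(f))$), plug in $\sigma(M)=b_2(M)$ and $e(M)=2-2b_1(M)+b_2(M)$, and reduce both inequalities to the single lattice-theoretic statement that every $\alpha\in\Omega(M)$ satisfies $\alpha\geq b_2(M)$. Your arithmetic is correct, and you rightly observe that Rudolph's theorem \ref{Rudolph} is not needed because Hirzebruch--Hopf controls $\lambda$ and $\rho$ separately.

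You add one thing the paper elides: the paper simply asserts that ``squares of characteristic elements are at least $\sigma(M)$'' when $S$ is positive definite, but as you point out this is false for general positive definite unimodular forms (the zero vector is characteristic in $E_8$ with square $0<8$). Identifying that the smoothness hypothesis must enter, via Donaldson's diagonalization theorem, is a genuine and correct clarification of the paper's proof, and the diagonal form argument you give (characteristic vectors have all odd entries, hence square $\geq b_2(M)$) then closes the gap.

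However, your proposed ``alternative route, avoiding Donaldson, via Elkies' theorem'' does not work as stated. Elkies' theorem asserts the \emph{converse} direction: a positive definite unimodular lattice of rank $n$ in which every characteristic vector has norm at least $n$ must be isomorphic to $\Z^n$; equivalently, any such lattice not isomorphic to $\Z^n$ contains a characteristic vector of norm strictly less than $n$. It therefore gives an \emph{upper} bound on the minimal characteristic norm for non-diagonal lattices, not the lower bound you want, and it cannot substitute for the smooth-manifold input. (What actually avoids Donaldson's theorem per se is the gauge-theoretic inequality -- Seiberg--Witten / Frøyshov -- which directly yields $c^2\geq\sigma(M)$ for every characteristic class $c$ on a smooth positive definite $M^4$; Elkies' theorem is then the lattice-theoretic complement used to \emph{reprove} diagonalization, not the source of the bound.) For the purposes of this proposition the Donaldson route you describe first is the correct one, and the ``alternative'' should either be dropped or replaced by the Seiberg--Witten inequality.
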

\begin{proof}
This follows from Lemma \ref{classes} and the fact that $\Omega(M)\subseteq \sigma(M)+8\Z_+$, when 
$S$ is positive definite, since squares of characteristic elements are at least $\sigma(M)$. 
\end{proof}

\subsection{Singular fibrations with a single critical point revisited}
Let $X_{1,-1}$ be the preimage $f^{-1}(D^2)$ of  a disk containing the singular values, 
where $f$ denotes the Matsumoto achiral fibration 
by tori $f:S^4\to S^2$ from Section \ref{matsum}. Let $Y_n$ be the result of expanding the 
Milnor fiber of the pretzel link $(2,-2,2n)$, $n\in \Z$ to a torus. 
The boundary fibrations $f:\partial X_{1,-1}\to S^1$ and $g:\partial Y_n\to S^1$ are trivial. 
Out of the blocks $X_{1,-1}$ and $Y_n$ we can construct 4-manifolds by gluing to each of them a 
trivial fibration $S^1\times S^1\times D^2$ along their boundaries respecting the 
trivial fibrations. 
The result of gluing might depend on the element in  $\Z\oplus \Z$ corresponding to the homotopy class of the loop of gluing diffeomorphisms, as the monodromy map is not surjective as in the statement of  Proposition \ref{classificationg1}.  The associated singular fibrations will be called generalized Matsumoto fibrations. Note that all manifolds $M^4$ obtained  in this way are not necessarily homology spheres, as we can also obtain $S^2\times S^2$ from $X_{1,-1}$.

\begin{proposition}\label{hspheres}
Suppose that $M$ is a rational homology sphere of dimension $4$                                                                                       
that admits some singular fibration $f:M^4\to S^2$.                                                                                          
Then $M^4$ is homeomorphic to one of the manifolds constructed above from the 
blocks   $X_{1,-1}$ and $Y_n$ and $f$ is equivalent to the corresponding 
generalized Matsumoto singular fibrations by tori. 
\end{proposition}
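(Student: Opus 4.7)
The plan is to combine the numerical constraints from Propositions \ref{notfiber} and \ref{notfibertorus} with the local classification of low-Milnor-number singularities on torus fibers, using Proposition \ref{onecritical} and the analysis in Section \ref{examples}.

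Since $M^4$ is a rational homology sphere, $b_1(M) = b_2(M) = 0$, so $e(M) = 2$ and the proof of Proposition \ref{notfiber} gives $\mu(f) = e(M) = 2$. Proposition \ref{notfibertorus} then forces the generic fiber $F$ to be a torus, since $e(S^2) = 2 \neq 0$. As each local Milnor number is a positive integer, either there are two critical points, each with Milnor number one, or there is a single critical point of Milnor number two.

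In the two-point case, each local Milnor fiber is an annulus embedded in $T^2$ and each local monodromy is $T_{c_i}^{\varepsilon_i}$ for an essential simple closed curve $c_i \subset T^2$ and $\varepsilon_i \in \{\pm 1\}$. Because the complementary region on $S^2$ carries no critical values, $T_{c_1}^{\varepsilon_1} T_{c_2}^{\varepsilon_2}$ must be trivial in $M(T^2) = SL(2,\Z)$. Since Dehn twists have infinite order and two Dehn twists commute iff their curves are isotopic, this forces $c_1 = c_2$ up to isotopy with $\varepsilon_1 = -\varepsilon_2$, so after an $SL(2,\Z)$-change of coordinates the disk block is diffeomorphic to $X_{1,-1}$.

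In the one-point case, the local Milnor fiber $F_0 \subset T^2$ satisfies $b_1(F_0) = 2$, so $F_0$ is either $\Sigma_{0,3}$ or $\Sigma_{1,1}$. If $F_0 = \Sigma_{1,1}$, the local link is one of the three fibered knots in $S^3$ with once-punctured-torus fiber (the two trefoils and the figure-eight), and after capping off to $T^2$ each of these monodromies descends to a nontrivial element of $SL(2,\Z)$ (of order $6$ or pseudo-Anosov). This would prevent the boundary fibration of the disk block from extending over the complementary disk of $S^2$, a contradiction. Hence $F_0 = \Sigma_{0,3}$. By Section \ref{examples}, the local link is a pretzel $(k_1, k_2, k_3)$ with $k_1 k_2 + k_2 k_3 + k_3 k_1 = \pm 1$; the expansion of $F_0$ to $T^2$ adjoins a surface of Euler characteristic one along $\partial \Sigma_{0,3}$, which must be the disjoint union of a disk and an annulus, forcing two of the $k_i$ to be opposite. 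Combined with the constraint above, this yields $(k_1, k_2, k_3) = (1, -1, n)$ for some $n \in \Z$, giving the pretzel $(2, -2, 2n)$ and the block $Y_n$. In both cases $M^4$ is reconstructed by gluing the resulting disk block to $T^2 \times D^2$ along $T^2 \times S^1$ via an element of $\pi_1({\rm Diff}_0(T^2)) = \Z \oplus \Z$, which is precisely the generalized Matsumoto construction described before the statement. The most delicate step will be ruling out $F_0 = \Sigma_{1,1}$: this requires the (classical) classification of fibered knots in $S^3$ with once-punctured-torus fiber together with the verification that each of their monodromies projects to a nontrivial class in $SL(2,\Z)$, so that its action on the capped torus obstructs the required extension.
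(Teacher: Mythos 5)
Your proof is correct and follows essentially the same route as the paper's: derive $\mu(f)=e(M)=2$ from Hirzebruch--Hopf plus Rudolph, split into the two-critical-point and single-critical-point cases, handle the annulus fibers via the Dehn-twist relation in $SL(2,\Z)$, rule out the one-holed-torus fiber using the classification of genus-one fibered knots, and reduce the pair-of-pants case to the pretzel analysis of Section \ref{examples}. The only real difference is in the two-point case: you observe at the outset that the only fibered links in $S^3$ with annulus page are the two Hopf links, so the local monodromies are already $T_{c_i}^{\pm 1}$, and then conclude by the elementary fact that two Dehn twists on $T^2$ commute iff their core curves are isotopic; the paper instead keeps general exponents $T_{\gamma_i}^{k_i}$ and invokes Ishida's classification of subgroups of $M(T^2)$ generated by two Dehn twists. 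Your route is slightly more elementary at this step, while the paper's avoids having to invoke the Hopf-link characterization; both are sound.
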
                                                                                                                            
\begin{proof}                                                                                                                                
We know that the generic fiber $F$ is a torus and $\mu(f)=2$.                                                                                
Since $\mu(f)_p$ is positive, $f$ has either two critical points $p_1,p_2$ with                                                                           
$\mu(f)_{p_i}=1$ or a single critical point with $\mu(f)_p=2$.                                                                               
                                                                                                                                             
Assume first that  $f$ has two critical points $p_i$ of                                                                          
$\mu(f)_{p_i}=1$. Then the local Milnor fiber $F_i$ of the singularity $p_i$ is                                                                   
an annulus. Therefore, the monodromy of the local Milnor fibration around $p_i$                                                              
has the form $T_{\gamma_i}^{k_i}$, where $\gamma_i$ is the core of the                                                                       
annulus $F_i$. Recall that $F_i$ are naturally embedded in $F$ and hence the simple                                                                   
curves $\gamma_i$ are drawn on $F$. Let $D^2\subset S^2$ be a disk containing both critical values $f(p_i)$ and $X=f^{-1}(D^2)$.  The monodromy of the restriction                                                                                                                    
$f|_{\partial X}:\partial X=f^{-1}(\partial D^2)\to \partial D^2$ should be trivial, as it bounds a trivial fibration over $S^2\setminus {\rm int}(D^2)$.                                                                                                                                
We then have the following relation in the mapping class group of the torus:                                                                 
\[ T_{\gamma_1}^{k_1} T_{\gamma_2}^{k_2}=1\in M(F)\]                                                                                         
                                                                                                                                             
We claim that $\gamma_1=\gamma_2$ and $k_1+k_2=0$. Indeed, according to \cite{Ishida}
given two essential simple closed curves $\gamma_i$ on the torus, the subgroup 
$\langle T_{\gamma_1}, T_{\gamma_2}\rangle$ generated 
by the two Dehn twists is:
\begin{enumerate}
\item  infinite cyclic when $\gamma_1$ and $\gamma_2$ are isotopic; 
\item free abelian freely generated by the two Dehn twists when $\gamma_1$ and $\gamma_2$ are disjoint up to isotopy;
\item a free group freely generated by the two Dehn twists when the intersection number of the 
isotopy classes of $\gamma_1$ and $\gamma_2$ is at least 2;
\item isomorphic to $SL(2,\Z)$ when the intersection number of the 
isotopy classes of $\gamma_1$ and $\gamma_2$ is 1. 
\end{enumerate}
In the case (4) the images of $T_{\gamma_1}$ and  $T_{\gamma_2}$ in $SL(2,\Z)$ act as opposite parabolics and so the image of $T_{\gamma_1}^{k_1} T_{\gamma_2}^{k_2}$ is not trivial unless 
$k_1=k_2=0$. Therefore only the case (1) can occur and this proves the claim.

Suppose now that $f$ has a single critical point with Milnor number $2$. Thus the local Milnor fiber is either a one-holed torus or a pair of pants.
 
It is well-known  that genus one fibered knots are either the trefoil (with either orientation) or the figure eight knot (which is amphicheiral), see \cite[Prop. 5.14]{BZ}.
There is a unique embedding of the one holed torus $F_1$ into the torus $F$. The monodromy of the open books associated with the 
trefoil knots and the figure eight knot can be found in \cite{BZ}. In both cases the corresponding matrices in $SL(2,\Z)$ are nontrivial and so 
the boundary fibration  $f|_{\partial X}:\partial X=f^{-1}(\partial D^2)\to \partial D^2$ is not trivial, contradicting our hypothesis. 
Thus this case cannot be realized. 

Further, we saw in Section \ref{examples} that the fiber of three component  fibered links of genus zero can be expanded to a torus if and only if  
these are the pretzel links $(2,-2,2n)$.                                                                                             
\end{proof}

Note that a homology sphere constructed out of $X_{1,-1}$ has an achiral Lefschetz fibration with two singular points of opposite signs and hence it must be homeomorphic to $S^4$.
However there are examples of rational homology spheres with nontrivial first homology 
which admit achiral Lefschetz fibrations.

\vspace{0.5cm}

%{\bf Acknowledgements}{ We are grateful to Norbert A'Campo, Patrick Popescu-Pampu and Mihai Tibar 
%for useful discussions. }

%\bibliographystyle{amsplain}


\begin{thebibliography}{99}


\bibitem{AC}
N. A'Campo,
{\em Generic immersions of curves, knots, monodromy and Gordian number}, 
Publ. Math. Inst. Hautes \'Etud. Sci. 88, 151--169 (1998). 



\bibitem{AC1}
N. A'Campo, {\em Real deformations and complex topology of plane curve singularities}, 
Ann. Fac. Sci. Toulouse 6-\`eme s\'erie, 8 (1999), 5--23. 



\bibitem{Amo}
J. Amor\'os,
{\it On the Malcev completion of K\"ahler groups}, 
Comment. Math. Helv. 71 (1996), 192--212. 



\bibitem {AndFun1} D. Andrica and L. Funar, \textit{On smooth maps with finitely many critical
points}, J. London Math. Soc. \textbf{69} (2004), 783--800,
{\it Addendum} \textbf{73} (2006), 231--236.

\bibitem{AFK}
D. Andrica, L. Funar and E. Kudryavtseva, 
{\it On the minimal number of critical points of maps between 
closed manifolds},  Russian J. Math. Phys. 
%special issue 
%``Conference for the 65-th birthday of Nicolae Teleman'', 
%(J.-P. Brasselet, A. Legrand, R. Longo, A. Mishchenko, Editors), 
16(2009), 363-370. 


\bibitem{BG}
P. Bellingeri and S. Gervais, {\em Surface framed braids}, 
Geometriae Dedicata 159 (2012), 51--69. 

%\bibitem{BE}
%I. Berstein and A.L. Edmonds, {\em On the construction of branched
%coverings of low-dimensional manifolds}, Trans. Amer. Math. Soc. 247
%(1979), 87-124.

%\bibitem{BGKZ}
%S.A. Bogatyi, D.L. Gon\c{c}alves, E.A. Kudryavtseva and
%H. Zieschang, {\em Realization of primitive branched coverings over closed surfaces},  Advances in topological quantum field theory,  297-316, NATO Sci. Ser. II Math. Phys. Chem., 179, Kluwer Acad. Publ., Dordrecht, 2004.

%\bibitem{BGKZ2}
%S.A. Bogatyi, D.L. Gon\c{c}alves, E.A. Kudryavtseva and H. Zieschang,
%{\em Realization of primitive branched coverings over closed surfaces
%following the Hurwitz approach},  Central Eur. J. Math.  1 (2003),
%184-197.

%\bibitem{BE}
% I.~Berstein and  A.~Edmonds, {\em The degree and branch set of a
%    branched covering},  Invent. Math. 45(1978), 213--220.


\bibitem{BZ}
G. Burde and H. Zieschang, Knots, de Gruyter Studies Math. vol. 5, 2003. 



\bibitem{CL}
P.T. Church and K. Lamotke, 
{\em Non-trivial polynomial isolated singularities}, 
Indag. Math. 37(1975), 149--154. 

\bibitem{CT1}
P.T. Church and J.G. Timourian,
{\em Differentiable maps with 0-dimensional critical set I},
Pacific J. Math. 41(1972), 615-630.

\bibitem{CT2}
P.T. Church and J.G. Timourian,
{\em Differentiable maps with 0-dimensional critical set II},
Indiana Univ. Math. J. 24(1974), 17-28.


\bibitem{Donald}
S. K. Donaldson, {\em Lefschetz pencils on symplectic manifolds}, 
J. Diff. Geometry 53 (199), 205--236. 

\bibitem{EE}
C.J. Earle and J. Eells, {\em A fibre bundle description of Teichm\"uller theory}, 
J. Diff. Geometry 3 (1969), 19--43. 


% \bibitem{EKS}
%A. Edmonds, R. Kulkarni and R. Stong, {\em Realizability of branched
%coverings of surfaces}, Trans. Amer. Math. Soc. 282 (1984), 773-790.
  
  

 




\bibitem{EF}
J.B. Etnyre and T. Fuller, {\em Realizing 4-manifolds as achiral Lefschetz fibrations}, Int. Math. Res. Notices, art.id 70272, 2006. 

%\bibitem{E}
%C.L. Ezell, {\em Branch point structure of covering maps onto
%nonorientable surfaces}, Trans. Amer. Math. Soc. 243 (1978), 123-133.

\bibitem{F}
L. Funar, {\it 
Global classification of isolated singularities in dimensions (4,3) and (8,5)},  Ann. Scuola Norm. Sup. Pisa Cl. Sci., vol X (2011), 819-861.

\bibitem{FPZ}
L. Funar, C. Pintea, P. Zhang, {\it Smooth maps with finitely many 
critical points in dimensions $(4,3)$ and $(8,5)$}, Proc. Amer. Math. Soc. 138(2010), 355-365. 

\bibitem{FP}
L. Funar and C. Pintea {\it Manifolds which admit maps with finitely many critical points into spheres of small dimensions}, 
   Michigan Math. J. 67 (2018), 585-615.

\bibitem{F19}
L. Funar, {\it 
Maps with finitely many critical points into high dimensional manifolds}, 
   Revista Matematica Complutense 34 (2021), 585--595.

\bibitem{FP20}
L. Funar and P. G. Pagotto,  {\em Braided surfaces and their characteristic maps}, 
   24p., arXiv:2004.09174. 



%\bibitem{GH}
%P. Griffits and J. Harris, Principles of algebraic geometry, 
%Wiley, New York, 1978. 

\bibitem{GG}
E. Giroux and N. Goodman, {\em On the stable equivalence of open books 
in three manifolds}, Geom. Topology 10 (2006), 97--14. 

\bibitem{Gompf}
R. Gompf, {\em A new construction of symplectic manifolds}, Ann. Math. 142 (1995), 527--595. 

\bibitem{Gompf2}
R. Gompf, {\em Handlebody construction of Stein surfaces}, Ann. Math. 148 (1998), 619--693. 


\bibitem{GS}
R. Gompf and A. Stipsicz, {An introduction to 4-manifolds and Kirby calculus}, Graduate Sudies Math. vol. 20, Amer. Math. Soc., 1999. 



\bibitem{Harer}
J. Harer, Pencils of curves of 4-manifolds, PhD thesis, Univ California, 1979. 

%\bibitem{HKK}
%J.Harer, A.Kas, R.Kirby, {\em 
%Handlebody decompositions of complex surfaces}, 
%Mem. Amer. Math. Soc. 62 (1986), no. 350, iv+102 pp. 



\bibitem{HL}
H.A. Hamm and Le Dung Trang, {\em Un th\'eor\`eme de Zariski du type de Lefschetz}, Ann. 
Sci. Ecole Norm. Sup. 6 (1973), 317--355. 

\bibitem{HH}
F. Hirzebruch and H. Hopf, 
{\em Felder von Fl\"achenelementen in 4-dimensionalen Manningfaltigkeiten},
Math. Annalen  136 (1958), 156--172. 


\bibitem{HAM}
C. Hog-Angeloni and W. Metzler, {\em Geometric aspects of two-dimensional complexes},
in Two-dimensional homotopy theory and combinatorial group theory, 1--50, 
(C. Hog-Angeloni, W. Metzler,  A. J. Sieradski, Ed.), London Math. Soc. Lectures Notes Ser. 
197, Cambridge Univ. Press, 1993.  

%  \bibitem{Hurwitz91}
%A. Hurwitz, {\em \"Uber Riemann'sche Fl\"achen mit gegebenen Verzweigungspunkten}, Math. Annalen 
%39 (1891), 1--60. 

%\bibitem{H}
%D.H. Husemoller, {\em Ramified coverings of Riemann surfaces}, Duke
%Math. J. 29 (1962), 167-174.


\bibitem{Inaba}
K. Inaba, {\em On the enhancement to the Milnor number of a class of mixed polynomials}, 
J. Math. Soc. Japan 66 (2014), 25--36. 


\bibitem{Ishida}
A. Ishida, {\em 
The structure of subgroups of mapping class groups generated by two Dehn twists},
Proc. Japan Math. Soc. 72 (1996), 240--241. 

\bibitem{IN}
M. Ishikawa  and H. Naoe, {\em 
Milnor fibration, A'Campo’s divide and Turaev's shadow}, 
Singularities – Kagoshima 2017, 
(M.Ishikawa et al. Ed.) Proc. 5th Franco-Japanese-Vietnamese symposium on singularities, World Scientific. 95--116, 2020. 


%\bibitem{Iw}
%Z. Iwase,{\it  Good torus fibrations with twin singular fibers},
%Japan. J. Math. 10 (1985), 321--352.


\bibitem{Kamada2}
S. Kamada, Braid and knot theory in dimension 4, Math Surveys and Monographs 95, AMS, 2002. 


\bibitem{KN}
L. Kauffman and W. Neumann, 
{\em Products of knots, branched fibrations and sums of singularities}, 
Topology 16(1977),  369--393. 

\bibitem{King}
H.C. King,  {\em Topological type of isolated singularities}, 
Annals of Math. 107(1978), 385--397. 

\bibitem{King2}
H.C. King,  {\em Topology of isolated critical points of 
functions on singular spaces},  Stratifications, singularities and differential equations, II (Marseille, 1990; Honolulu, HI, 1990),  63--72, (David Trotman and Leslie Charles Wilson, editors),  Travaux en Cours, 55, Hermann, Paris, 1997. 

\bibitem{KO}
M. Korkmaz and B. Ozbagci, {\it 
Minimal number of singular fibers in a Lefschetz fibration},
Proc. Amer. Math. Soc. 129 (2001), no. 5, 1545--1549.
 
\bibitem{Kup}
G. Kuperberg, {\em Noninvolutory Hopf algebras and 3-manifolds invariants}, 
Duke Math. J.  84 (1996), 83--129.


%\bibitem{Kod }
%Kodaira Ann of Math 


\bibitem{LP}
F. Laudenbach and V. Poenaru, {\em A note on 4-dimensional handlebodies}, 
Bull. Soc. Math. France 100 (1972), 337--344. 


\bibitem{Loo}
E. Looijenga, {\em A note on polynomial isolated singularities}, 
Indag. Math.  33(1971),  418--421. 


\bibitem{MS}
D. McDuff and D. Salamon, Introduction to symplectic topology, Oxford Univ. Press, 1995. 

\bibitem{Mat0}
Y. Matsumoto, {\it Torus fibrations over the $2$-sphere with the simplest singular fibers}, J. Math. Soc. Japan 37 (1985), no. 4, 605--636.

\bibitem{Mat}
Y. Matsumoto, {\it Topology of torus fibrations}, Sugaku Expositions,
2(1989), 55--73.

\bibitem{Mat2}
Y. Matsumoto, {\it Handlebody decomposition of 4-manifolds and
torus fibrations}, Osaka J.Math. 33(1996), 805--822.


\bibitem{MO}
M. Miller and B. Ozbagci, {\it Lefschetz fibrations on nonorientable 4-manifolds}, 
Pacific J. Math. 312 (2021), 177--202. 


\bibitem{Milnor}
J. Milnor, Singular points of complex hypersurfaces, Princeton University Press, 1968. 


\bibitem{Moishe}
B. Moishezon, Complex surfaces and  connected sums of projective planes, 
Lecture Notes Math. 603, Springer Verlag, 1977. 


\bibitem{NR}
W. Neumann and L. Rudolph, {\it Unfoldings in knot theory}, 
Math. Annalen 278 (1987), 409--439, Corrigendum 282 (1988), 349--351.  

\bibitem{NR2}
W. Neumann and L. Rudolph, The enhanced Milnor number in higher dimensions, 
 Differential Topology Proc. Siegen 1987, Springer Lecture Notes Math. 1350, 1988, 109--121. 


\bibitem{NR3}
W. Neumann and L. Rudolph, {\it Difference index of vector fields and enhanced Milnor number}, 
Topology 29 (1990), 83--100. 

\bibitem{OO}
S. Onaran and B. Ozbagci, {\em 
Minimal number of singular fibers in a nonorientable Lefschetz fibration}, 
Geom. Dedicata 216 (2022), no. 1, Paper No. 4, 6 p. 
 
\bibitem{PR}
L. Paris and D. Rolfsen, {\em 
Geometric subgroups of mapping class groups}, 
J. Reine Angew. Math. 521 (2000), 47--83. 

\bibitem{Pon}
L. Pontryagin, {\em A classification of mappings of the three dimensional 
complex into a two-dimensional sphere}, Math. Sbornik 9 (51) (1941), 331--363. 

\bibitem{PY}
G. Prasad and S.-K. Yeung, {\em Fake projective planes}, 
Inventiones Math. 168 (2) (2007), 321--370. 

\bibitem{Rog}
C. Rogers, {\em The genus zero 3-component fibered links in $S^3$}, 
arXiv:2006.01341. 



\bibitem{Rud}
L. Rudolph, {\em Isolated critical points of mappings from $R\sp 4$ to $R\sp 2$ and a natural splitting of the Milnor number of a classical fibered link. 
I. Basic theory; examples},  Comment. Math. Helv.  62(1987),  630--645. 

\bibitem{Rud2}
L.Rudolph, {\em Isolated critical points of mappings from $R\sp 4$ to $R\sp 2$ and a natural splitting of the Milnor number of a classical fibered link II.}  
%II. Totally braided open book structures and closed generalized homogeneous braids with a remark on the Jones polynomial of some homogeneous braids}, 
Geometry and topology, Proc. Conf. Athens 1985, Lect. Notes Pure Appl. Math. 105, 251--263, 1987. 



\bibitem{Takens}
F. Takens, {\em Isolated critical points of $\mathcal C^{\infty}$ and $\mathcal C^{\omega}$ functions}, 
Indag. Math. 28 (1967), 238--243.  

%\bibitem{Takens2}
%F.Takens, {\em 
%The minimal number of critical points of a function on a compact manifold and the Lusternik-Schnirelman categor},  
%Invent. Math.  6(1968), 197--244.

\bibitem{Thomas}
E. Thomas, {\em Vector fields on manifolds}, Bull. Amer. Math. Soc. 75 (1969), 643--682. 

\bibitem{Tu}
V. Turaev, {Quantum invariants of knots and 3-manifolds}, de Gruyter Studies Math. vol. 18, 1994. 


%\bibitem {Tim} J. G. Timourian, \textit{Fiber bundles with discrete singular set},
%J. Math. Mech. \textbf{18} (1968), 61--70.

%\bibitem{W}
%S. Weinberger, {\em Homology manifolds}, Handbook of geometric topology, 1085–1102, North-Holland, Amsterdam (2002)

\end{thebibliography}
\end{document}